\newcommand{\ul}{\underline}
\newcommand{\ol}{\overline}
\newcommand{\wt}{\widetilde}
\newcommand{\pd}{{\operatorname{pd}}}
\newcommand{\ku}{{\operatorname{k\ddot{u}}}}
\newcommand{\co}{{\operatorname{cr}}}
\newcommand{\A}{\mathbb{A}}
\newcommand{\C}{\mathbb{C}}
\newcommand{\G}{\mathbb{G}}
\renewcommand{\L}{\mathbb{L}}
\renewcommand{\P}{\mathbb{P}}
\newcommand{\Q}{\mathbb{Q}}
\newcommand{\Z}{\mathbb{Z}}
\newcommand{\sC}{\mathscr{C}}
\newcommand{\sH}{\mathscr{H}}
\newcommand{\sM}{\mathscr{M}}
\newcommand{\sO}{\mathscr{O}}
\newcommand{\bZ}{\mathbb{Z}}
\newcommand{\op}{{\operatorname{op}}}
\newcommand{\Ab}{\operatorname{\mathbf{Ab}}}
\newcommand{\Mod}{\operatorname{\mathbf{Mod}}}
\newcommand{\Hom}{\operatorname{Hom}}
\newcommand{\Ext}{\operatorname{Ext}}
\newcommand{\Tor}{{\operatorname{Tor}}}
\newcommand{\fr}{{\operatorname{fr}}}
\newcommand{\Coker}{\operatorname{Coker}}
\newcommand{\coker}{\operatorname{Coker}}
\renewcommand{\Im}{\operatorname{Im}}
\newcommand{\ch}{\operatorname{ch}}
\newcommand{\tr}{{\operatorname{tr}}}
\newcommand{\id}{{\operatorname{id}}}
\newcommand{\pr}{\operatorname{pr}}
\renewcommand{\lim}{\operatornamewithlimits{\varprojlim}}
\newcommand{\colim}{\operatornamewithlimits{\varinjlim}}
\newcommand{\rank}{\operatorname{rank}}
\newcommand{\Pic}{\operatorname{Pic}}
\newcommand{\Alb}{\operatorname{Alb}}
\newcommand{\NS}{\operatorname{NS}}
\newcommand{\Br}{\operatorname{Br}}
\newcommand{\Spec}{\operatorname{Spec}}
\newcommand{\Fld}{\operatorname{\mathbf{Fld}}}
\newcommand{\Sch}{\operatorname{\mathbf{Sch}}}
\newcommand{\Sm}{\operatorname{\mathbf{Sm}}}
\newcommand{\SmProj}{\operatorname{\mathbf{SmProj}}}
\newcommand{\Zar}{{\operatorname{Zar}}}
\newcommand{\Nis}{{\operatorname{Nis}}}
\newcommand{\et}{{\operatorname{\acute{e}t}}}
\newcommand{\cyc}{{\operatorname{cyc}}}
\newcommand{\Gal}{{\operatorname{Gal}}}
\newcommand{\CH}{{\operatorname{CH}}}
\newcommand{\Chow}{\operatorname{\mathbf{Chow}}}
\newcommand{\GL}{{\operatorname{GL}}}
\newcommand{\Cor}{\operatorname{\mathbf{Cor}}}
\newcommand{\PST}{{\operatorname{\mathbf{PST}}}}
\newcommand{\fg}{{\operatorname{fg}}}
\newcommand{\alg}{{\operatorname{ac}}}
\newcommand{\ur}{{\operatorname{ur}}}
\newcommand{\eff}{{\operatorname{eff}}}
\newcommand{\bir}{{\operatorname{bir}}}
\newcommand{\nor}{{\operatorname{nor}}}
\newcommand{\lMod}{\Mod_\Lambda}
\theoremstyle{plain}
\newtheorem{theorem}{Theorem}[section]
\newtheorem{proposition}[theorem]{Proposition}
\newtheorem{lemma}[theorem]{Lemma}
\newtheorem{corollary}[theorem]{Corollary}
\newtheorem{problem}[theorem]{Problem}
\theoremstyle{definition}
\newtheorem{definition}[theorem]{Definition}
\newtheorem{example}[theorem]{Example}
\newtheorem{remark}[theorem]{Remark}
\newtheorem{setting}[theorem]{Setting}
\numberwithin{equation}{section}
\begin{document}

\title[Torsion birational motives of surfaces]
{Torsion birational motives of surfaces 
\\ and unramified cohomology}

\dedicatory{In memory of Noriyuki Suwa} 

\author[K. Sato]{Kanetomo Sato}
\address{Department of Mathematics,
Chuo University, 1-13-27 Kasuga,
Bunkyo-ku, Tokyo 112-8551, Japan}
\email{kanetomo@math.chuo-u.ac.jp}

\author[T. Yamazaki]{Takao Yamazaki}
\address{Department of Mathematics,
Chuo University, 1-13-27 Kasuga,
Bunkyo-ku, Tokyo 112-8551, Japan}
\email{ytakao@math.chuo-u.ac.jp}

\date{\today}

\keywords{Unramified cohomology, birational motives, decomposition of diagonal}
\subjclass{14C15 (Primary) 14M20, 19E15 (Secondary)}
 
\thanks{
The first author is supported by JSPS KAKENHI Grant (JP20K03566). 
The second author is supported by JSPS KAKENHI Grant (JP21K03153). 
}


\begin{abstract}
Let $S$ and $T$ be smooth projective varieties over
an algebraically closed field $k$.
Suppose that $S$ is a surface 
admitting a decomposition of the diagonal.
We show that,
away from the characteristic of $k$,
if an algebraic correspondence $T \to S$
acts trivially on the unramified cohomology,
then it acts trivially on 
any normalized, birational, and motivic functor.
This generalizes Kahn's result on the torsion order of $S$.
We also exhibit an example of $S$ over $\C$
for which $S \times S$ violates the integral Hodge conjecture.
\end{abstract}

\maketitle

\section{Introduction}
Let $k$ be an algebraically closed field, and
let $\Chow^{\eff}_{\Z}$ be the covariant category of effective Chow motives over $k$ with $\Z$-coefficients.
Until \S \ref{sect:intro-s3}
we assume 
the characteristic $p$ of $k$ is zero for simplicity,
although most results remain valid 
away from $p$ if $p>0$.

\subsection{Main exact sequence}
Recall that a smooth projective variety $X$ over $k$
is said to admit a \emph{decomposition of the diagonal}
if the degree map induces an isomorphism
$\CH_0(X_{k(X)}) \otimes \Q \cong \Q$,
where $k(X)$ denotes the total ring of fractions of $X$.
This condition implies that $X$ is connected, and
$H^0(X, \Omega_{X/k}^1)=H^0(X, \Omega_{X/k}^2)=0$.
If $\dim X=2$, Bloch's conjecture predicts the converse (see \S \ref{sect:dec-diag} for details).

\par
Let $S$ be a projective smooth surface over $k$ which admits a decomposition of the diagonal.
In his paper \cite{K1}, Kahn introduced a new category $\Chow^{\nor}_{\Z}$,
the category of normalized birational motives,
which is defined as a quotient category of $\Chow^{\eff}_{\Z}$
and has the property that there is a canonical isomorphism
\[ \Chow^{\nor}_{\Z}(T,S)
 \cong \CH_0(S_{k(T)})_\Tor \qquad \text{(cf.\ \eqref{eq:CH0S-CHtor})} \]
for any smooth projective variety $T$ over $k$.
By this isomorphism for $T=S$, the motive of $S$ is a torsion object in $\Chow^{\nor}_{\Z}$
(cf. Definition \ref{def:tor-ord}).
To compute its order, he established an exact sequence
\begin{equation}\label{eq1-1}
 0 \to \Chow^{\nor}_{\Z}(S,S) \to 
\Tor(H_\ur^1(S), H_\ur^2(S))^{\oplus 2}
 \to
H^3_\ur(S \times S) \to 0
\end{equation}
in \cite[Corollary 6.4(a)]{K1}, cf.\ Example \ref{ex:Bruno} below.
Here for a smooth scheme $X$ over $k$ and $i \in \Z_{> 0}$,
 $H^i_\ur(X)$ is the \emph{unramified cohomology} of $X$, defined as follows:
\begin{equation}\label{eq:unram-coh-intro}
H_\ur^i(X):=H^0_\Zar(X, \sH^i),
\end{equation}
where 
$\sH^i$ is the Zariski sheaf on $X$ associated to
the presheaf $U \mapsto H^i_\et(U, \Q/\Z(i-1))$.
As is well-known, we have
$H^1_\ur(X) \cong H^1_\et(X, \Q/\Z)$
and 
$H^2_\ur(X) \cong \Br(X)$, 
the Brauer group of $X$
(see \S \ref{sect:unram-coh} for details). %

Kahn deduced \eqref{eq1-1} by applying $T=S$
to a complicated result \cite[Theorem 6.3]{K1}
that involves $\Chow^{\nor}_{\Z}(T,S)$ 
for a general smooth projective variety $T$ over $k$.
Attempting to foster a better understanding of it,
we found the following simple statement.
(See Remark \ref{ex:Bruno2} below for more discussion.)

\begin{theorem}[Theorem \ref{thm:main2-full}]
\label{thm:main2}
Let $k$ and $S$ be as above, and let $T$ be a smooth projective variety over $k$.
Then there is an exact sequence
\begin{equation}\label{eq:Vishik-ex-seq}
0 \to 
\Chow^{\nor}_{\Z}(T,S)
 \to 
\bigoplus_{i=1, 2} \ \Hom(H_\ur^i(S), H_\ur^i(T)) 
\to H^3_\ur(S \times T) \to 0.
\end{equation}
\end{theorem}

We shall prove the exactness of \eqref{eq:Vishik-ex-seq} by computing the image of the cycle class map
\[ \CH_0(S_{k(T)})_\Tor \longrightarrow H^4_\et(S_{k(T)},\mu_{m}^{\otimes 2}) \]
for a sufficiently large $m$, using Vishik's method \cite{Vishik},
which gives an alternative proof of \eqref{eq1-1}.
%

\subsection{Motivic, birational, and normalized functors}
\label{sect:intro-s2}
Recall from \cite{K1}
that a contravariant functor $F$ defined on 
the category of smooth projective varieties over $k$
and with values in the category of abelian groups
is called 
\begin{itemize}
\item \emph{motivic} 
if $F$ factors through an additive functor on $\Chow^\eff_\Z$,
\item \emph{birational} 
if $F(f)$ is an isomorphism 
for any birational morphism $f$,
and
\item \emph{normalized} 
if $F(\Spec k)=0$.
\end{itemize}
A normalized, birational, and motivic functor is equivalent to a functor which factors through an additive functor on $\Chow^\nor_\Z$. See \S \ref{sect:mot-inv} for details.
Fundamental examples of such functors
include $H^0(-, \Omega^i_{-/k})$ for $i>0$ and
the unramified cohomology \eqref{eq:unram-coh-intro}.
We deduce the following result from
the injectivity of the first map in \eqref{eq:Vishik-ex-seq}:

\begin{theorem}[Theorem \ref{thm:main1-full}]
\label{thm:main1}
Let $S$ and $T$ be smooth projective varieties over $k$.
Suppose that $S$ admits a decomposition of the diagonal
and $\dim S=2$.
Let $f : T \to S$
be an algebraic correspondence
such that 
$H^i_\ur(f) : H^i_\ur(S) \to H^i_\ur(T)$ 
vanishes for $i=1, 2$.
Then 
$F(f) : F(S) \to F(T)$ vanishes
for any normalized, birational, and motivic functor $F$.
\end{theorem}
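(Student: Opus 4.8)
The strategy is to translate the conclusion into the language of birational motives, reduce it to a statement about a single $0$-cycle on $S$ over the function field of $T$, and then analyse that $0$-cycle by means of the decomposition of the diagonal of $S$, separating a part governed by $H^1_\ur(S)$ from a part governed by $H^2_\ur(S)$.

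\textbf{Reduction to a birational $0$-cycle.} Since $F$ is motivic, birational, and normalized, it factors through the category $\ol\Chow$ of birational Chow motives of Kahn--Sujatha and, using the canonical splitting $\ol M(X)\cong\one\oplus\ol M^\circ(X)$ available because $k$ is algebraically closed, through its reduced version. By the Yoneda lemma, ``$F(f)=0$ for every normalized, birational, and motivic functor $F$'' is equivalent to the vanishing of the induced morphism $\ol M^\circ(T)\to\ol M^\circ(S)$; under the identification $\Hom_{\ol\Chow}(\ol M(T),\ol M(S))=\CH_0(S_{k(T)})$ this means the vanishing of the degree-$0$ component $z$, in the subgroup $A_0(S_{k(T)})\subset\CH_0(S_{k(T)})$ of degree-$0$ classes, of the $0$-cycle to which $f$ restricts over the generic point of $T$. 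Since each $H^i_\ur$ is a birational invariant equipped with transfers, $z$ acts on it, and the hypothesis says precisely that $z$ annihilates $H^1_\ur(S)$ and $H^2_\ur(S)$; in degrees $\geq 3$ the corresponding maps vanish for free because $H^i_\ur(S)=0$, the field $k(S)$ having cohomological dimension $2$. One also uses that, as $S(k)\neq\emptyset$, the difference between $f$ and $z$ (a multiple of a point class pulled back from $\Spec k$) contributes nothing to either side. Thus it remains to prove: \emph{if $z\in A_0(S_{k(T)})$ annihilates $H^1_\ur(S)$ and $H^2_\ur(S)$, then $z=0$.}

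\textbf{The curve part.} The decomposition of the diagonal provides, by Bloch--Srinivas, an integer $N\geq 1$, a point $x_0\in S$, a smooth projective curve $C$, and a morphism $\nu\colon C\to S$ such that $N\Delta_S$ decomposes in $\CH^2(S\times S)$ into $N[S\times x_0]$, $N[x_0\times S]$, a correspondence supported on $\nu(C)\times\nu(C)$ realizing the (here entirely algebraic) summand $h^2(S)$, and a torsion correspondence $\xi$ that measures the failure of an \emph{integral} decomposition. Feeding $z$ through this, and using $\deg z=0$, every term but $\xi$ kills $A_0$; moreover $z$ becomes, up to a bounded integer, a push-forward along $\nu$ from $A_0(C_{k(T)})$, and its image there is a torsion class because $A_0(S_{k(T)})$ is torsion (the decomposition of the diagonal gives $\CH_0(S_{k(T)})\otimes\Q=\Q$ over every extension of $k$). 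Torsion classes on the curve $C$ are detected, after corestriction, by degree-$1$ \'etale cohomology, i.e.\ by $H^1_\ur$; tracing the construction back to $S$, the resulting ``degree $1$'' invariant of $z$ is exactly the one read off by $H^1_\ur(f)$. Hence $H^1_\ur(f)=0$ kills the curve part of $z$, and we are reduced to the torsion defect carried by $\xi$.

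\textbf{The torsion part, and the main obstacle.} It remains to show that this residual torsion is seen by $H^2_\ur(S)=\Br(S)$. The mechanism is a cohomological presentation of $A_0(S_{k(T)})$ itself: combining Bloch's formula, the Bloch--Ogus (coniveau) spectral sequence of $S$ --- which degenerates in the relevant range \emph{precisely because} $\dim S=2$, so that $H^{\geq 3}_\ur(S)=0$ --- and the Merkurjev--Suslin / Colliot-Th\'el\`ene comparison of codimension-$2$ Chow groups with \'etale cohomology, one obtains a description of $A_0(S_{k(T)})$, functorial in $T$ and compatible with correspondences, in which the part not already accounted for by the curve is computed by a pairing with $\Br(S)=H^2_\ur(S)$. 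Thus $H^2_\ur(f)=0$ annihilates it too, forcing $z=0$. The heart of the argument --- and the step I expect to be the main obstacle --- is establishing this completeness: that $H^1_\ur(S)$ and $H^2_\ur(S)$ exhaust the birational-motivic obstructions carried by $A_0(S_{k(T)})$, so that nothing survives in $\xi$ beyond what these two groups detect. This is exactly where the hypothesis $\dim S=2$ is indispensable, through the vanishing of $H^{\geq 3}_\ur(S)$ and the consequent collapse of the coniveau spectral sequence; carrying this out rigorously, together with the functoriality needed to match the construction against $H^1_\ur(f)$ and $H^2_\ur(f)$, is where the bulk of the technical work lies, and it is this that upgrades the decomposition-of-diagonal estimates (which only give ``$Nz=0$'') to the exact conclusion $z=0$.
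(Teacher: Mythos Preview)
Your Yoneda reduction to the vanishing of $z\in\CH_0(S_{k(T)})_{\Tor}\cong\Chow^\nor_\Lambda(T,S)$ is correct and matches the paper (Lemma~\ref{lem:yoneda}). The remaining task is the injectivity of $\Phi\colon\CH_0(S_{k(T)})_{\Tor}\to\bigoplus_{i=1,2}\Hom(H^i_\ur(S),H^i_\ur(T))$, and here your sketch has a genuine gap. The ``curve part'' is confused: from the decomposition $N\Delta_S=N[S\times x_0]+N[x_0\times S]+(\text{NS projector})+\xi$ you correctly deduce $Nz=\xi_*z$, but it does \emph{not} follow that $z$ or $Nz$ is a push-forward from the curve---the NS projector, factoring through $\Lambda(1)^\rho$, annihilates degree-zero $0$-cycles outright and tells you nothing further about $z$. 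Even if one arranges $\xi$ to be supported on a divisor, the link you assert between torsion $0$-cycles on a curve (detected via its Jacobian) and $H^1_\ur(S)\cong\NS(S)_\Tor$ is unjustified. There is thus no clean split of $z$ into pieces controlled separately by $H^1_\ur$ and $H^2_\ur$; the whole of $z$ sits in the torsion defect, and for that you only gesture at Bloch--Ogus and Merkurjev--Suslin without a proof.

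The paper's route is entirely different and does not attempt a geometric decomposition of $z$. It isolates the torsion summand $M$ of $h^\eff(S)$ (Proposition~\ref{prop:GG-V}), identifies $\CH_0(S_K)_{\Tor}$ with the motivic cohomology group $H^{4,2}_\sM(M_K)$, and computes the latter for arbitrary $K\in\Fld$ by Vishik's method (Theorem~\ref{thm:vishik}): one starts from the \'etale range $H^{a,a}_\sM(M_K)\cong H^a_\et(M_K)$ via Beilinson--Lichtenbaum, splits it using the K\"unneth formula (Proposition~\ref{prop:etcoh-dec}) and a Bockstein-split sequence for $H^2_\et(M)$, and then descends twice along $\tau$ using the long exact sequence of \cite{TY} together with the exactness of the Bockstein complex $(H^{\bullet,b}_\sM(M_K),Q)$. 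The output is the exact sequence of Theorem~\ref{thm:tech-main}, whose injective arrow is $\Phi$; a separate cycle-map computation (Lemma~\ref{lem:compatible}) confirms that $\Phi$ agrees with the correspondence action on unramified cohomology.
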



Theorem \ref{thm:main1} will be applied 
to the K3 cover $f : T \to S$
of an Enriques surface $S$ over $\C$
to interpret Beauville's result \cite{B}
in Example \ref{ex:Beauville} below.

\subsection{Explicit computation of 
$\boldsymbol{\CH_0(S_{k(S)})_{\Tor}}$ and $\boldsymbol{H^3_\ur(S \times S)}$}
\label{sect:intro-s3}
The groups appearing in \eqref{eq:Vishik-ex-seq}
attracted some attention.
Kahn \cite[p. 840, footnote]{K1} raised a question
asking the structure of 
$\CH_0(S_{k(S)})_{\Tor}$ for an Enriques surface $S$.
The group $H^3_\ur(X)$ for a smooth projective variety $X$
over $\C$ is studied by many authors,
since it gives an obstruction to the integral Hodge conjecture
by a theorem of
Colliot-Th\'el\`ene and Voisin \cite{CTV}
(see Theorem \ref{thm:CTV}).
Therefore there is some interest 
in making each term in \eqref{eq:Vishik-ex-seq} explicit.
In this direction, we obtain the following result.

\begin{theorem}[Theorem \ref{thm:main3-full}]
\label{thm:main3} 
Let $S$ be a smooth projective surface over $k$
having a decomposition of the diagonal.
Suppose moreover that $H^1_\ur(S)$ is a cyclic group of 
prime order $\ell$.
Then we have
\[
|\CH_0(S_{k(S)})_{\Tor}|=|H^3_\ur(S \times S)|=\ell.
\]
\end{theorem}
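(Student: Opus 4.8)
The plan is to apply Theorem~\ref{thm:main2} with $T=S$. By the Remark following Theorem~\ref{thm:main1}, $H^1_\ur(S)$ and $H^2_\ur(S)$ are finite abelian groups dual to one another, so $H^2_\ur(S)\cong H^1_\ur(S)\cong\Z/\ell$, whence $\Hom(H^i_\ur(S),H^i_\ur(S))\cong\Z/\ell$ for $i=1,2$ and the middle term of \eqref{eq:Vishik-ex-seq} has order $\ell^2$. Exactness therefore gives
\[
|\CH_0(S_{k(S)})_\Tor|\cdot|H^3_\ur(S\times S)|=\ell^2 .
\]
Since $\ell$ is prime it now suffices to show that $\ell$ divides each of the two factors, for this forces both to equal $\ell$.

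For the left-hand factor I would use the decomposition of the diagonal. Fix $e\in S(k)$ and let $\delta,[e]\in\CH_0(S_{k(S)})$ denote the classes of the $k(S)$--rational diagonal point and of the constant cycle $e$. Both have degree $1$, so $\delta-[e]$ has degree $0$ and hence lies in $\CH_0(S_{k(S)})_\Tor$ by the decomposition of the diagonal. Under the injection in \eqref{eq:Vishik-ex-seq} the class $\delta$ corresponds to the diagonal correspondence, acting as the identity on each $H^i_\ur(S)$, whereas $[e]$ corresponds to a constant correspondence, which factors through $\Spec k$ and so acts as $0$ on $H^i_\ur(S)$ for $i\ge1$ (as $H^i_\ur(\Spec k)=0$). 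Thus $\delta-[e]$ maps to $(\id_{H^1_\ur(S)},\id_{H^2_\ur(S)})$, an element of order $\ell$, and therefore $\ell\mid|\CH_0(S_{k(S)})_\Tor|$.

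For the right-hand factor the plan is to produce a non-zero class in $H^3_\ur(S\times S)$ by an external cup product. Since the external product of $\sH^1(\Q/\Z(1))$ and $\sH^2(\Q/\Z(1))$ lands in $\sH^3(\Q/\Z(2))$, and $\Q/\Z(1)\cong\Q/\Z$ over $k=\bar k$, cup product of pullbacks gives a pairing $H^1_\ur(S)\otimes H^2_\ur(S)\to H^3_\ur(S\times S)$, $a\otimes b\mapsto\pr_1^*a\cup\pr_2^*b$. I would show it is non-zero on generators by restricting to the generic point of $S\times S$. A generator $a$ of $H^1_\ur(S)=H^1_\et(S,\Q/\Z)\cong\Z/\ell$ is a non-trivial homomorphism $\pi_1(S)\to\Z/\ell$ and so cuts out a degree--$\ell$ field extension of $k(S)$; as $k(S)$ is algebraically closed in $k(S\times S)$, it follows that $\pr_1^*a$ restricts to a non-zero class in $H^1_\et(k(S\times S),\mu_\ell)$. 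A generator $b$ of $H^2_\ur(S)\cong\Br(S)\cong\Z/\ell$ restricts to a non-zero class in $\Br(k(S\times S))[\ell]=H^2_\et(k(S\times S),\mu_\ell)$, because $\pr_2$ admits a constant section, so $\Br(S)\hookrightarrow\Br(S\times S)$ is split injective, and $\Br(S\times S)\hookrightarrow\Br(k(S\times S))$ by regularity. Granting that $\pr_1^*a\cup\pr_2^*b\ne0$ in $H^3_\et(k(S\times S),\mu_\ell^{\otimes2})$, we obtain $\ell\mid|H^3_\ur(S\times S)|$, and the proof is complete.

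The crux is precisely this last non-vanishing: the cup product of a non-trivial degree--one symbol from the first factor with a non-trivial Brauer class from the second must be non-zero in $H^3$ of $k(S\times S)$. Let $\wt S\to S$ be the connected cyclic $\Z/\ell$--cover attached to $a$, set $F=k(S\times S)$ and $F'=k(\wt S\times S)$, so that $F'/F$ is the degree--$\ell$ extension cut out by $\pr_1^*a$. The long exact sequence attached to the cyclic extension $F'/F$ reduces the assertion to showing that $\pr_2^*b$ is not a corestriction from $F'$; I would deduce this from the fact that $b$, viewed as a Brauer class on $S$, remains ramified after extending its constant field in the first variable. Alternatively, and perhaps more cleanly, one can bypass the computation: the proof of Theorem~\ref{thm:main2} (via Vishik's method) realizes $H^3_\ur(S\times S)$ as a cokernel in \eqref{eq:Vishik-ex-seq}, and from that description the injection there is visibly not surjective, its image being the cyclic group of order $\ell$ generated by $(\id_{H^1_\ur(S)},\id_{H^2_\ur(S)})$.
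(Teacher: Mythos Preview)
Your setup and the first divisibility are fine and match the paper: the middle term has order $\ell^2$, and the class $\delta-[e]$ maps to $(\id,\id)$, so $\ell\mid|\CH_0(S_{k(S)})_\Tor|$. This is exactly the paper's observation that $\id_M$ has order $\ell$ in $\Chow^\eff_\Lambda(M,M)\cong\Chow^\nor_\Lambda(S,S)$.

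The gap is in the second divisibility, and it is genuine. Your cup-product route reduces to showing that $\pr_1^*a\cup\pr_2^*b$ is non-zero \emph{at the generic point} of $S\times S$; by K\"unneth the class is non-zero in $H^3_\et(S\times S,\mu_\ell^{\otimes 2})$, but that does not prevent it from lying in coniveau $\ge 1$, which is exactly what you must exclude. The cyclic-extension reduction you sketch asks whether $\pr_2^*b\in\Br(F)$ lies in the corestriction image from $F'=k(\wt S\times S)$; nothing you have written rules this out, and ``$b$ remains ramified after extending the constant field in the first variable'' is not the relevant obstruction here. Your alternative at the end is circular: asserting that the image of $\Phi$ is the diagonal cyclic subgroup generated by $(\id,\id)$ is precisely the statement to be proved. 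Nothing in the construction of \eqref{eq:Vishik-ex-seq} makes this ``visible''; the map $\Phi$ is simply the action of correspondences on unramified cohomology, and a priori there could be a correspondence acting as $(\id,0)$ or $(0,\id)$.

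The paper closes this gap with an external input you do not invoke: Vishik's theorem that any non-trivial torsion direct summand $N$ of the motive of a surface (in characteristic zero, with $\ell\cdot\id_N=0$) has $\Pic(N)\ne 0$. Concretely, the paper uses Proposition~\ref{prop:GG-V2} to identify $\CH_0(S_{k(S)})_{\Tor}\cong\Chow^\eff_\Lambda(M,M)$, so that $\Phi$ is an injective \emph{ring} homomorphism into $\End(H^1_\ur(S))\oplus\End(H^2_\ur(S))\cong\Z/\ell\times\Z/\ell$. If $\Phi$ were surjective it would be a ring isomorphism, and the idempotent $(0,\id)$ would lift to a projector $\pi\in\Chow^\eff_\Lambda(M,M)$ with $N=\Im(\pi)$ satisfying $\Pic(N)=0$ and $\Br(N)\cong\Z/\ell$, contradicting Vishik. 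That is the missing idea; without it (or an equivalent), the argument does not conclude.
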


This applies to an Enriques surface $S$ (with $\ell=2$),
thereby answering Kahn's question.
(See Example \ref{ex:chow-computation}
for this point and for more examples.)
It also provides us with counter-examples
for the integral Hodge conjecture
(see Corollary \ref{cor:integHC}).

\subsection{A remark on the $\boldsymbol{p}$-part in characteristic $\boldsymbol{p>0}$}
\label{rem:p-in-char-pos}
Suppose now that $k$ has characteristic $p>0$.
As alluded to in the beginning of the introduction, 
most of our proof works over $k$ for the non-$p$-primary torsion part,
with the help of an isomorphism $\Z/m\Z \cong \mu_m$ for $m \in \Z_{>0}$ invertible in $k$.

To pursue a $p$-primary analogue of our arguments,
one may consider a $p$-adic counterpart of the unramified cohomology,
which is defined, for $i, j \in \Z_{\ge 0}$ and a smooth $k$-scheme $X$, as
\[ H^{i, j}_\ur(X)\{p\} := \colim_{n \geq 1} H^0_\Zar(X, \sH^{i,j}_{p^n}). \]
Here $\sH^{i, j}_{p^n}$
 is the Zariski sheaf on $X_\Zar$ associated to the presheaf
 $U \mapsto H^{i-j}_\et(U, W_n \Omega_{U, \log}^j)$,
 and $W_n \Omega_{U, \log}^j$ is the \'etale subsheaf of the logarithmic part of
 the Hodge-Witt sheaf $W_n \Omega_{U}^j$ (see \cite{I}).
The functors $H^{i,j}_\ur(-)\{p\}$ are birational, and motivic by \cite[Proposition 1.3]{KOY} and Proposition \ref{prop:P1inv-bir} below, and normalized for $(i,j) \ne (0,0)$.
However, the groups $H^{i, j}_\ur(S)\{p\}$ do {\it not} necessarily detect the $p$-primary torsion part
 $\CH_0(S_{k(T)})_{p\text{-}\Tor}$.
In fact, when $S$ is a supersingular Enriques surface over $k$ with $\ch(k)=2$,
 we have $H^{i, j}_\ur(S)\{2\}=0$ for all $(i,j) \ne (0,0)$,
 but $\CH_0(S_{k(S)})_{2\text{-}\Tor}$ is non-zero.
We will discuss this example in detail, later in Remark \ref{rem:p-primary}\,(2) below.

\subsection*{Organization of the paper}
\S 2 is a recollection on the Chow motives and birational motives.
We then study
a torsion direct summand of the Chow motive of a surface 
admitting a decomposition of the diagonal  in \S 3.
A key result is Proposition \ref{prop:GG-V2}.
\S 4 is devoted to a preliminary computation 
of cohomology of torsion motive of a surface.
In \S 5, we employ the method of Vishik \cite{Vishik}
to study the motivic cohomology of a torsion motive 
constructed in \S 3.
This result is then applied to deduce an exact sequence in \S 6,
which relates the Chow group
$\CH_0(S_{k(S)})_{\Tor}$ appearing in 
Theorems \ref{thm:main2} and \ref{thm:main3}
with the unramified cohomology.
The main results 
(Theorems \ref{thm:main2-full}, \ref{thm:main1-full}, \ref{thm:main3-full})
are proved in \S 7,
which also contains a discussion of examples and related topics.
\S 8 is an appendix
where we prove elementary results
on homological algebra that are used in the body of the paper.
Another appendix \S 9 contains a proof of the proposition 
saying that 
a $\P^1$-invariant Nisnevich sheaf with transfer 
is a motivic and birational functor.

\subsection*{Acknowledgement} 
We would like to thank Bruno Kahn 
for his invaluable comments for the earlier version,
as well as for his permission to include Proposition \ref{prop:P1inv-bir}.
We are very grateful to the referee for
his or her careful reading and useful comments
that improved the paper very much.

\subsection*{Notations and conventions}
We use the following notations 
throughout this paper. 
\begin{itemize}
\item $k$ is a field,
which will be assumed to be algebraically closed
from \S \ref{sect:surf} onward.
\item
$p$ is the characteristic of $k$ if it is positive,
and $p:=1$ otherwise.
\item
$\Lambda$ is either $\Z, \Z[1/p]$ or $\Q$.
From \S \ref{sect:surf} onward,
we assume $\Lambda=\Z[1/p]$.
\end{itemize}
Notations relative to $k$.
\begin{itemize}
\item 
$\Fld$ is the category of fields over $k$
and $k$-homomorphisms.
Denote by $\Fld^\fg$ (resp. $\Fld^\alg$)
its full subcategory consisting of
those which are finitely generated over $k$
(resp. algebraically closed).
\item 
$\Sch$ is the category of 
separated $k$-schemes of finite type
and $k$-morphisms.
Its full subcategory consisting of 
smooth 
(resp. smooth and projective)
$k$-schemes
is denoted by $\Sm$ (resp. $\SmProj$).
We write $\times$ for the product in $\Sch$
(i.e., 
the fiber product over $\Spec k$ 
in the category of all schemes).
\end{itemize}
Notations relative to $X \in \Sch$.
\begin{itemize}
\item
$X_R := X \times_{\Spec k} \Spec R$ for a $k$-algebra $R$.
\item
$K(X)$ is the total ring of fractions of $X_K$
for $K \in \Fld$.
\item
$X_{(i)}$
is the set of all points of $X$ of dimension $i$ for $i \in \Z$.
\item
$\CH_i(X)$
is the Chow group of dimension $i$ cycles on $X$
for $i \in \Z$.
\item
$\Pic(X)$ 
is the Picard group of $X$.
\item
$\NS(X)$ is the N\'eron-Severi group 
if $X \in \Sm$.
\end{itemize}
Additional general notations,
where $A$ is an abelian group:
\begin{itemize}
\item 
$A[m]:=\{ a \in A \mid ma=0 \}$ for $m \in \Z_{>0}$,
$A_\Tor:=\cup_{m \in \Z_{>0}} A[m]$,
and $A_\fr:=A/A_\Tor$.
\item 
$\exp(A):=\inf\{ m \in \Z_{>0} \mid m A=0 \}
\in \Z_{>0} \cup \{ \infty \}$.
\item
$A_R:=A \otimes_\Z R$ for a commutative ring $R$.
\item 
The set of all morphisms from $X$ to $Y$
in a category $\sC$ is written by $\sC(X, Y)$.
\item 
$\lMod$ is the category of all $\Lambda$-modules
and $\Lambda$-homomorphisms.
\end{itemize}

\section{Preliminaries}

In this section we recall 
some definitions and results 
from \cite{CT, GG, K1, KS1, T, Vishik} 
that will be used later.

\subsection{Chow motives}\label{sect:cat-mot}

We write $\Chow(k)_\Lambda$ for 
the \emph{covariant} category of Chow motives
over $k$ with coefficients in $\Lambda$,
defined e.g. in
\cite[\S 1.5, 1.6]{K1}, \cite[\S 4, p.2092]{T}.
(This is opposite of the more frequently used
contravariant version, see e.g. \cite{S}.)
It is 
a $\Lambda$-linear rigid symmetric monoidal 
pseudo-abelian category.
Any object of $\Chow(k)_\Lambda$
can be written as
$(X, \pi, r)$ for some 
equidimensional $X \in \SmProj$,
a projector $\pi$ of $X$,
and $r \in \Z$.
(By a projector of $X$ we mean 
$\pi \in \CH_{\dim X}(X \times X)_\Lambda$
such that $\pi \circ \pi = \pi$,
where $\circ$ denotes the composition of algebraic correspondences.)
We have
\[ \Chow(k)_\Lambda((X, \pi, r), (Y, \rho, s)) 
= \rho \circ \CH_{\dim X + r - s}(X \times Y)_\Lambda \circ \pi,
\]
where 
$X, Y \in \SmProj$ (with $X$ equidimensional),
$\pi, \rho$ projectors of $X, Y$, and $r, s \in \Z$.
We write
$\Lambda(r):=(\Spec k, \id_{\Spec k}, r)$
and $M(r) := M \otimes \Lambda(r)$
for $M \in \Chow(k)_\Lambda$.
Thus $\Lambda := \Lambda(0)$
is a unit object for the monoidal structure.
We denote by $M^\vee$ the dual object of $M$.

The category of effective Chow motives 
$\Chow(k)^\eff_\Lambda$
is the full subcategory of $\Chow(k)_\Lambda$
consisting of all objects isomorphic to
those of the form $(X, \pi, r)$ with $r \ge 0$.
There is a covariant functor
\begin{equation}\label{eq:h-eff}
h^\eff : \SmProj \to \Chow(k)_\Lambda^\eff,
\qquad h^\eff(X)=(X, \id_X, 0).
\end{equation}
We have $h^\eff(X) = h^\eff(X)^\vee(d)$
if $X \in \SmProj$ is purely $d$-dimensional.
For $M \in \Chow(k)_\Lambda$ and $r \in \Z$ we write
$\CH_r(M)_\Lambda := \Chow(k)_\Lambda(\Lambda(r), M)$
so that we have 
$\CH_r(h^\eff(X))_\Lambda=\CH_r(X)_\Lambda$
for any $X \in \SmProj$.

We abbreviate 
$\Chow_\Lambda:=\Chow(k)_\Lambda$
and
$\Chow^\eff_\Lambda:=\Chow(k)^\eff_\Lambda$.
For any $K \in \Fld$,
there is a base change functor
$\Chow_\Lambda \to \Chow(K)_\Lambda$
written by $M \mapsto M_K$.

\subsection{Torsion motives}

Vishik \cite[Definition 2.4]{Vishik} 
defines a torsion motive to be
an object $M \in \Chow_\Lambda$ such that
$m \cdot \id_M=0$ for some $m \in \Z_{>0}$.
Since we will need a similar notion
considered in different categories,
we introduce the following general terminology:

\begin{definition}\label{def:tor-mot}
We say an object $A$ of an additive category $\sC$ 
is \emph{torsion}
if there exists $m \in \Z_{>0}$ such that
$m \cdot \id_A=0$ in $\sC(A, A)$.
This is equivalent to saying that
$\sC(A, B)$ (or $\sC(B, A)$) is a torsion abelian group
for any $B \in \sC$.
\end{definition}

The following is an obvious variant
of a result of Gorchinskiy-Guletskii \cite[Lemma 1]{GG}
(compare \cite[Proposition 2.1]{Di}).

\begin{lemma}\label{lem:GG}
For $M \in \Chow_\Lambda$,
the following conditions are equivalent.
\begin{enumerate}
\item 
$M$ is a torsion object of $\Chow_\Lambda$.
\item 
$\CH_n(M_K)_\Lambda$ is torsion 
for any $n \in \Z$ and
for any $K \in \Fld$.
\item 
$\CH_n(M_K)_\Lambda$ is torsion 
for any $n \in \Z$ and
for any $K \in \Fld^\alg$.
\item 
$\CH_n(M_K)_\Lambda$ is torsion 
for any $n \in \Z$ and
for any $K \in \Fld^\fg$.
\end{enumerate}
\end{lemma}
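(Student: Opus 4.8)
The plan is to prove the chain of equivalences $(1)\Rightarrow(2)\Rightarrow(4)\Rightarrow(3)$ (or a similar cycle), exploiting that the hard implication is going from a torsion statement on a limited class of fields back to torsion-ness of the identity morphism. First I would observe that $(1)\Rightarrow(2)$ is immediate from the last sentence of Definition~\ref{def:tor-mot}: if $m\cdot\id_M=0$ in $\Chow_\Lambda$, then after base change $m\cdot\id_{M_K}=0$ in $\Chow(K)_\Lambda$ for every $K\in\Fld$, hence $\CH_n(M_K)_\Lambda=\Chow(K)_\Lambda(\Lambda(n),M_K)$ is killed by $m$. The implications $(2)\Rightarrow(3)$ and $(2)\Rightarrow(4)$ are trivial, since $\Fld^\alg$ and $\Fld^\fg$ are subcategories of $\Fld$. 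So the content is to close the loop, i.e.\ to show one of the restricted torsion hypotheses forces $M$ to be torsion.

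The key step is to recover the identity morphism of $M$ from Chow groups of $M$ over a single well-chosen field. Writing $M=(X,\pi,r)$ with $X\in\SmProj$ equidimensional of dimension $d$, one has
\[
\Chow_\Lambda(M,M)=\pi\circ\CH_d(X\times X)_\Lambda\circ\pi
\]
and, crucially, $\id_M$ is the image of $\pi$ itself under this projection. The standard device is to take $K=k(X)$, the function field (or total ring of fractions) of $X$, and to use that the restriction of a correspondence in $\CH_d(X\times X)_\Lambda$ to the generic point of the first factor gives an element of $\CH_d(X_K)_\Lambda$ which, up to the twist, is $\CH_{d-r}(M_K)_\Lambda$ or a direct summand thereof. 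More precisely, I would use the localization sequence / specialization argument: the map
\[
\CH_d(X\times X)_\Lambda \longrightarrow \CH_d(X_{k(X)})_\Lambda
\]
sending a cycle to its generic fiber is surjective with kernel generated by cycles supported over a proper closed subset of the first factor $X$. Applying this to $\pi$ and using a Noetherian induction on the (finitely many) irreducible components of such supports — each of which is itself a variety with its own function field, a field still in $\Fld$ (or, after passing to an algebraic closure or to a finitely generated subfield, in $\Fld^\alg$ or $\Fld^\fg$) — one reduces killing $\id_M$ to killing finitely many classes in $\CH_*(M_L)_\Lambda$ for finitely many $L$ lying in whichever restricted class one is starting from.

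The main obstacle, and the reason the three restricted hypotheses $(2)$, $(3)$, $(4)$ all turn out equivalent to $(1)$, is controlling this Noetherian induction so that the intermediate fields land in the desired subcategory. For $\Fld^\fg$ this is automatic, since function fields of finite-type $k$-schemes are finitely generated. For $\Fld^\alg$ one must additionally observe that torsion of $\CH_n(M_{\bar L})_\Lambda$ implies torsion of $\CH_n(M_L)_\Lambda$ for $L\subset\bar L$: a class in $\CH_n(M_L)_\Lambda$ that dies after base change to $\bar L$ already dies over some finite extension $L'/L$, and then a transfer (norm) argument multiplying by $[L':L]$ shows the original class is torsion. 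Assembling these observations — the generic-fiber surjectivity, Noetherian induction on supports, and the transfer trick for the algebraically closed case — yields that in all three cases the projector $\pi$, hence $\id_M$, is annihilated by a single positive integer, giving $(1)$. I expect the transfer/descent step for $\Fld^\alg$ to require the most care, since one must check the bounded denominators do not accumulate over the finitely many induction steps; but finiteness of the set of components keeps everything uniform.
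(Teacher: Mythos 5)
Your proposal is essentially the paper's argument: the nontrivial closing-the-loop direction is done by Noetherian induction through the localization sequence, exactly as in the paper (following Gorchinskiy--Guletskii \cite{GG}), and the $\Fld^\alg$ case is handled by the same transfer/norm trick you describe. The one visible deviation is $(1)\Rightarrow(2)$: you get it directly from $\Lambda$-linearity of the base-change functor $\Chow_\Lambda\to\Chow(K)_\Lambda$, whereas the paper instead proves $(1)\Rightarrow(4)$ by presenting $\CH_n(M_{k(X)})_\Lambda$ as a quotient of a Chow group taken over $k$ itself via Nagata compactification and a de Jong alteration --- your shortcut is correct and simpler. Two small slips worth fixing: the generic-fiber restriction of a class in $\CH_d(X\times X)_\Lambda$ lands in $\CH_0(X_{k(X)})_\Lambda$, not $\CH_d(X_{k(X)})_\Lambda$; and your worry about accumulating denominators is unfounded, because the induction only needs each group to be a torsion abelian group (not bounded torsion), and the single integer killing $\id_M$ is extracted only at the very end via the equivalence in Definition~\ref{def:tor-mot}.
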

\begin{proof}
(2) $\Rightarrow$ (3) and
(2) $\Rightarrow$ (4) are obvious.
(3) $\Rightarrow$ (2)
holds because 
$\ker(\CH_n(M_K)_\Lambda 
\to \CH_n(M_{\ol{K}})_\Lambda)$ is torsion,
where $\ol{K}$ is an algebraic closure of $K \in \Fld$.
(4) $\Rightarrow$ (2) is seen by taking colimit.
We have shown the equivalence 
(2) $\Leftrightarrow$ (3) $\Leftrightarrow$ (4).

Let us show (1) $\Rightarrow$ (4).
By the shown equivalence (3) $\Leftrightarrow$ (4),
we are reduced to the case $k$ is algebraically closed
(in particular $k$ is perfect).
Take $K \in \Fld^\fg$.
By Nagata's compactification 
and de Jong's alteration 
(see \cite[Theorem 4.1]{Co}, \cite[Theorem 4.1]{dJ}),
we can find an integral proper $k$-scheme $X \in \Sch$ with $K=k(X)$
and a proper surjective generically finite morphism $f : Y \to X$
with $Y \in \SmProj$ integral.
We then have a sequence of induced maps
\[
\CH_{n+d_Y}(M \otimes Y)_\Lambda \twoheadrightarrow 
\CH_{n}(M_{k(Y)})_\Lambda \overset{f_*}{\to} \CH_{n}(M_{k(X)})_\Lambda,
\]
where $d_Y:=\dim Y$.
The first map is surjective,
and the cokernel of the second map is annihilated by $[k(Y): k(X)]$.
Since $\CH_{n+d_Y}(M \otimes Y)_\Lambda
=\Chow_\Lambda(\Lambda(n+d_Y), M \otimes Y)$ 
is torsion 
by the assumption (1),
we conclude that $\CH_{n}(M_{k(X)})_\Lambda$ is torsion as well.

It remains to prove
(2) $\Rightarrow$ (1),
for which 
we follow \cite[Lemma 1]{GG}.
Write $M=(X, \pi, r) \in \Chow_\Lambda$ with 
$X$ equidimensional and put $d_X := \dim X$.
We take $N \in \Chow_\Lambda$ and
show that $\Chow_\Lambda(M, N)$ is torsion.
We may assume $N=h^\eff(Y)$ for connected $Y \in \SmProj$
(by replacing $r$ if necessary).
Given $Z \in \Sch$,
we define $\CH_n(M \otimes Z)_\Lambda$ 
as the image of 
an idempotent operator
\[
\CH_n(X \times Z)_\Lambda \to \CH_n(X \times Z)_\Lambda,
\qquad
\alpha \mapsto p_{23 *}(p_{13}^*(\alpha) \cdot_{p_{12}} \pi),
\]
where $p_{ij}$ are respective projections on 
$X \times X \times Z$,
and $\cdot_{p_{12}}$ is the global product along $p_{12}$
defined in \cite[\S 8.1]{F};
this product exists since $X \times X$ is smooth.
We show that $\CH_n(M \otimes Z)_\Lambda$ is torsion 
for any integral $Z \in \Sch$ and for any $n$
by induction on $d_Z:=\dim Z$.
The case $d_Z=0$ is immediate from the assumption (2).
If $d_Z>0$, 
from the localization sequence for $X \times Z$
we deduce an exact sequence
\[
\bigoplus_W \CH_n(M \otimes W)_\Lambda
\to \CH_n(M \otimes Z)_\Lambda 
\to \CH_{n-d_Z}(M_{k(Z)})_\Lambda \to 0,
\]
where $W$ runs through 
integral proper closed subschemes of $Z$.
The claim now follows by induction.
Applying this to $Z=Y$ and $n=d_X+r$, 
we conclude $\CH_{d_X+r}(M \otimes Y)_\Lambda
=\Chow_\Lambda(M, N)$ is torsion.
\end{proof}

\subsection{Birational motives}\label{sect:birat-mot}

We write $\Chow_\Lambda^\bir$ for 
the category of birational motives 
over $k$ with coefficients in $\Lambda$ 
from \cite[Definition 2.3.6]{KS1}.
(This is denoted by 
$\Chow^\circ(k, \Lambda)$ in \cite{KS1}.)
It comes equipped with a  functor
$\Chow_\Lambda^\eff \to \Chow_\Lambda^\bir$.
We write the composition of it with $h^\eff$ by
\begin{equation}\label{eq:h-bir}
h^\bir : \SmProj \to \Chow^\bir_\Lambda.
\end{equation}
We then have 
\[ \Chow_\Lambda^\bir(h^\bir(X), h^\bir(Y)) 
= \CH_0(Y_{k(X)})_\Lambda
\]
for any $X, Y \in \SmProj$
(see \cite[Lemma 2.3.7]{KS1}).

\begin{remark}\label{rem:comparison}
There are several variants of $\Chow^\bir_\Lambda$.
We recall two of them.
\begin{enumerate}
\item 
Denote by 
$\Chow_\Lambda^{\bir, 1}$
the pseudo-abelian envelope of the 
category obtained from 
$\Chow^\eff_\Lambda$
by inverting all birational morphisms.
\item 
Denote by 
$\Chow_\Lambda^{\bir, 2}$
the pseudo-abelian envelope
of $\Chow^\eff_\Lambda/\L$,
where 
$\L$ is the ideal of $\Chow^\eff_\Lambda$
consisting of all morphisms which 
factor through an object of the form $M(1)$
with $M \in \Chow^\eff_\Lambda$.
\end{enumerate}
There are functors
\[
\Chow_\Lambda^{\bir, 2}
\overset{\cong}{\longrightarrow}
\Chow_\Lambda^{\bir, 1}
\overset{}{\longrightarrow}
\Chow_\Lambda^{\bir}.
\]
The first one is always an equivalence,
and so is the second 
at least if $p$ is invertible in $\Lambda$
(see \cite[Proposition 2.2.9, Corollary 2.4.3]{KS1}).
As 
$\Chow^\eff_\Lambda \to \Chow^\bir_\Lambda$
factors through $\Chow_\Lambda^{\bir, 2}$,
the image of $M(1)$ vanishes in $\Chow^\bir_\Lambda$
for any $M\in \Chow^\eff_\Lambda$.
\end{remark}

Finally, we write $\Chow^\nor_\Lambda$ for 
the quotient category of $\Chow^\bir_\Lambda$
by the ideal consisting of all morphisms which 
factor through $\Lambda=h^\bir(\Spec k)$,
introduced in \cite[Definition 2.4]{K1}.
Denote by 
\begin{equation}\label{eq:h-nor}
h^\nor : \SmProj \to \Chow^\nor_\Lambda
\end{equation}
the composition of $h^\bir$ and 
the localization functor
$\Chow^\bir_\Lambda \to \Chow^\nor_\Lambda$.
We have 
\begin{equation} \label{eq:hom-in-chow-nor}
\Chow^\nor_\Lambda(h^\nor(X), h^\nor(Y)) = 
\coker(\CH_0(Y)_\Lambda \to \CH_0(Y_{k(X)})_\Lambda)
\end{equation}
for any $X, Y \in \SmProj$ 
(see loc. cit.).

\begin{remark}\label{rem:abbre-mot}
If no confusion is likely,
we abbreviate 
$h^\eff(X),~ h^\bir(X)$, and $h^\nor(X)$
by $X$ for $X \in \SmProj$.
Similarly,
for $M \in \Chow^\eff_\Lambda$
we use the same letter $M$ to denote
its images in 
$\Chow^\bir_\Lambda$ and $\Chow^\nor_\Lambda$.
For instance,
the left hand side of \eqref{eq:hom-in-chow-nor}
will be written by
$\Chow^\nor_\Lambda(X, Y)$.
\end{remark}

\subsection{Motivic invariants}\label{sect:mot-inv}

Denote by $\lMod$ the category of
$\Lambda$-modules.
Following \cite[Definition 2.1]{K1},
we introduce some definitions.

\begin{definition}\label{def:bir-mot-inv}
Let $F : \SmProj^\op \to \lMod$ be a functor.
\begin{enumerate}
\item 
We say $F$ is \emph{birational}
if $F(f)$ is an isomorphism for any
birational morphism $f$.
\item
We say $F$ is \emph{motivic}
if $F$ factors through 
an additive functor $\Chow^{\eff, \op}_\Lambda \to \lMod$.
\item
We say $F$ is \emph{normalized} if
$F(\Spec k)=0$.
\end{enumerate}
\end{definition}

\begin{lemma}\label{lem:bir-funct-factor}
Suppose that $p$ is invertible in $\Lambda$.
A functor $F : \SmProj^\op \to \lMod$
is birational and motivic 
(resp.  normalized, birational, and motivic)
if and only if $F$ factors through 
an additive functor 
$\Chow^{\bir, \op}_\Lambda \to \lMod$
(resp. $\Chow^{\nor, \op}_\Lambda \to \lMod$).
\end{lemma}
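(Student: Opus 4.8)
The plan is to deduce Lemma \ref{lem:bir-funct-factor} from the analogous characterization of motivic functors together with the universal properties of $\Chow^\bir_\Lambda$ and $\Chow^\nor_\Lambda$ recalled in \S\ref{sect:birat-mot}. First I would treat the ``birational and motivic'' case. If $F$ factors through an additive functor $\bar F : \Chow^\bir_\Lambda \to \lMod$, then composing with the localization functor $\Chow^\eff_\Lambda \to \Chow^\bir_\Lambda$ (and with $h^\eff$) shows $F$ is motivic; it is birational because $h^\bir(f)$ is an isomorphism in $\Chow^\bir_\Lambda$ for any birational morphism $f$ of smooth projective varieties (this is built into the construction of $\Chow^{\bir,1}_\Lambda$, and $\Chow^{\bir,1}_\Lambda \to \Chow^\bir_\Lambda$ is an equivalence when $p$ is invertible in $\Lambda$ by Remark \ref{rem:comparison}). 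Conversely, suppose $F$ is birational and motivic, so $F = G \circ h^\eff$ for some additive $G : \Chow^\eff_\Lambda \to \lMod$. I would show $G$ kills the ideal $\L$ of Remark \ref{rem:comparison}, i.e.\ $G$ sends every morphism factoring through an object of the form $M(1)$ to zero. By additivity and the fact that $\L$ is generated by identity morphisms $\id_{M(1)}$, it suffices to check $G(\id_{Y(1)}) = 0$ for $Y \in \SmProj$ connected, i.e.\ $G(h^\eff(Y)(1)) = 0$. Here I would use the standard blow-up/projective bundle trick: $h^\eff(\P^1_Y)$ (or the blow-up of $\P^{\dim Y + 1}$ along a point, giving a $\P^1$-bundle over a point on one hand and a birational modification on the other) decomposes as $h^\eff(Y) \oplus h^\eff(Y)(1)$, while the structural/birational morphisms force $F$, hence $G$, to see only the $h^\eff(Y)$ summand; since $F$ is birational, $G$ applied to the projection onto the $h^\eff(Y)(1)$ factor vanishes. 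Thus $G$ factors through $\Chow^\eff_\Lambda/\L$, and by Remark \ref{rem:comparison} (the equivalence $\Chow^{\bir,2}_\Lambda \xrightarrow{\cong} \Chow^{\bir,1}_\Lambda$ and, when $p$ is invertible in $\Lambda$, $\Chow^{\bir,1}_\Lambda \xrightarrow{\cong} \Chow^\bir_\Lambda$) through $\Chow^\bir_\Lambda$ after passing to pseudo-abelian envelopes (harmless since $\lMod$ is idempotent-complete).

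For the normalized version, I would simply add the observation that an additive functor $\Chow^\bir_\Lambda \to \lMod$ kills the ideal generated by $\Lambda = h^\bir(\Spec k)$ if and only if it sends $h^\bir(\Spec k)$ to $0$, which by the definition of $\Chow^\nor_\Lambda$ in \S\ref{sect:birat-mot} is exactly the condition for it to factor through $\Chow^\nor_\Lambda$. Since $F(\Spec k) = \bar F(h^\bir(\Spec k))$ under the factorization, $F$ is normalized precisely when $\bar F$ descends to $\Chow^\nor_\Lambda$. Combining this with the birational-and-motivic case gives the ``resp.'' statement.

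The step I expect to be the main obstacle is verifying that a birational motivic $F$ kills $\L$, i.e.\ the vanishing $G(h^\eff(Y)(1)) = 0$. The subtlety is that ``birational'' as stated only refers to $F(f)$ being an isomorphism for birational \emph{morphisms} of smooth projective varieties, so one must actually produce such morphisms that witness the triviality of the Tate twist summand; the clean way is via the identity $h^\eff(\mathrm{Bl}_{\{pt\}} \P^{n+1}) \cong h^\eff(\P^{n+1}) \oplus h^\eff(\P^n)(1)$ together with the blow-down and the $\P^1$-bundle projection, or equivalently to invoke that this is precisely the content of \cite[Proposition 2.2.9]{KS1} / the identification of $\Chow^{\bir,1}_\Lambda$ with $\Chow^{\bir,2}_\Lambda$ — so in practice I would just cite Remark \ref{rem:comparison} and \cite[Proposition 2.2.9, Corollary 2.4.3]{KS1} rather than reprove it. Everything else is a formal diagram chase with universal properties, using only that $\lMod$ is an idempotent-complete additive category so that factorizations through pseudo-abelian envelopes are automatic.
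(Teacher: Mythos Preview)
Your proposal is correct and, once you strip away the exploratory sketch and keep only what you yourself say you would actually write (namely, cite Remark~\ref{rem:comparison} and \cite[Proposition~2.2.9, Corollary~2.4.3]{KS1} and use that $\lMod$ is idempotent-complete), it coincides with the paper's proof, which is literally the sentence ``This is immediate from what we recalled in \S\ref{sect:birat-mot}.'' One minor simplification: for the converse direction you route through $\Chow^{\bir,2}_\Lambda$ by trying to show $G$ kills the ideal $\L$, but it is more direct to use $\Chow^{\bir,1}_\Lambda$ instead---since $G$ inverts $h^\eff(f)$ for every birational morphism $f$, it factors through the localization (and then the pseudo-abelian envelope) by the universal property, and the equivalence $\Chow^{\bir,1}_\Lambda \simeq \Chow^\bir_\Lambda$ (valid because $p$ is invertible in $\Lambda$) finishes; this avoids the blow-up/projective-bundle computation entirely, which as you note is really the content of \cite[Proposition~2.2.9]{KS1} and not something one would redo here.
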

\begin{proof}
This is immediate from what we recalled in 
\S \ref{sect:birat-mot}.
\end{proof}

\begin{remark}\label{rem:extension-F}
Given a motivic 
(resp. birational and motivic,
resp. normalized, birational, and motivic)
functor $F : \SmProj^\op \to \lMod$,
its extension to 
$\Chow^\eff_\Lambda$
(resp. $\Chow^\bir_\Lambda$, resp. $\Chow^\nor_\Lambda$)
is denoted by the same letter $F$.
\end{remark}

\begin{example}\label{ex:bir-mot-ft}
\begin{enumerate}
\item 
Suppose $p=1$ or $\Lambda=\Z$.
It is a classical fact that
$H^0(-, \Omega^i_{-/k})$ 
is birational and motivic 
for any $i \in \Z_{\ge0}$;
it is also normalized if $i>0$.
It is less classical that the same is true of
$H^i(-, \sO)$ if $k$ is perfect
(see \cite{CR}).
\item 
It is obvious from the definition that
the functor
\begin{equation} \label{eq:CH0-S}
\Chow^\nor_\Lambda(-, S) :
T \mapsto \Chow^\nor_\Lambda(T, S)
= 
\Coker(\CH_0(S)_\Lambda \to \CH_0(S_{k(T)})_\Lambda)
\end{equation}
is birational, motivic, and normalized
for any fixed $S \in \SmProj$.
\item 
Let $M$ be a cycle module in the sense of Rost \cite{R}.
Then its $0$-th cycle cohomology $A^0(-, M_n)$ 
is birational and motivic
by \cite[Corollary 6.1.3]{KS1}.
We will only use a special case of unramified cohomology,
which will be recalled in the next subsection.
\item
A $\P^1$-invariant Nisnevich sheaf with transfers
is birational and motivic.
We include a proof of this fact,
due to Bruno Kahn, in an appendix
(see Proposition \ref{prop:P1inv-bir} below).
This recovers all examples discussed above,
except $H^i(-, \sO)$.
\end{enumerate}
\end{example}


\subsection{Unramified cohomology}\label{sect:unram-coh}

A general reference for this subsection is \cite{CT}.
Let $K \in \Fld$ and $i \in \Z$.
For $n \in \Z_{>0}$ invertible in $k$,
the \emph{unramified cohomology} of $K/k$ 
is defined by
\begin{equation}\label{eq:def-unramcoh1}
H_{\ur, n}^i(K/k) := \ker\left(
H_\Gal^i(K, \mu_n^{\otimes (i-1)}) \to \bigoplus_v 
H_\Gal^{i-1}(F_v, \mu_n^{\otimes (i-2)}) \right),
\end{equation}
where $v$ ranges over all discrete valuations of $K$ 
that are trivial on $k$,
and $F_v$ is the residue field of $v$.
The maps appearing in the definition are the residue maps
(see \cite[(3.6)]{CT}).
We set
\begin{equation}\label{eq:def-unramcoh2}
H_\ur^i(K/k) := \colim_{(n, p)=1} H_{\ur, n}^i(K/k),
\end{equation}
where $n$ ranges over all $n \in \Z_{>0}$ 
that is invertible in $k$.
By Rost-Voevodsky's norm residue isomorphism theorem 
(which is the former Bloch-Kato conjecture
and proved in \cite[Theorem 6.16]{V-lcoef}), 
we may identify $H_{\ur, n}^i(K/k)$
with the $n$-torsion part of $H_\ur^i(K/k)$:
\begin{equation}\label{eq:def-unramcoh3}
H_{\ur, n}^i(K/k) \cong 
H_\ur^i(K/k)[n].
\end{equation}

Let $X \in \Sm$ and $i \in \Z$.
For $n \in \Z_{>0}$ invertible in $k$, 
the \emph{unramified cohomology} of $X$ 
is defined as
\begin{equation}\label{eq:def-unramcoh4}
H^i_{\ur, n}(X)
:=H^0_\Zar(X, \sH^i_n),
\qquad
H^i_\ur(X) := \colim_{(n, p)=1} H^i_{\ur, n}(X),
\end{equation}
where 
$\sH^i_n$ is the Zariski sheaf on $X$
associated to the presheaf
$U \mapsto H^i_\et(U, \mu_n^{\otimes (i-1)})$,
and the colimit in the second formula is taken 
in the same way as \eqref{eq:def-unramcoh2}.
We have canonical isomorphisms
(see \cite[Propositions 4.2.1, 4.2.3]{CT})
\begin{equation} \label{eq:ur-coh-12}
H^1_{\ur, n}(X) \cong H^1_\et(X, \Z/n\Z), 
\qquad
  H^2_{\ur, n}(X) \cong \Br(X)[n],
\end{equation}
where $\Br(X):=H^2_\et(X, \G_m)$ is the Brauer group of $X$.
If further $X$ is integral and proper over $k$,
we also have
(see \cite[Theorem 4.1.1]{CT})
\begin{equation} \label{eq:unramcoh-K-S}
H^i_{\ur, n}(X) \cong H^i_{\ur, n}(k(X)/k),
\qquad 
H^i_{\ur}(X) \cong H^i_{\ur}(k(X)/k).
\end{equation}

The following well-known fact 
plays an essential role in this paper:

\begin{proposition}\label{prop:unram-coh}
Let $i, n \in \Z$ and suppose that $n$ is invertible in $k$.
Then the functor
$H_{\ur, n}^i : \SmProj \to \Mod_{\Z[1/p]}$
is birational and motivic.
The same is true for $H_\ur^i$.
They are also normalized 
if $i>0$ and $k$ is algebraically closed.
\end{proposition}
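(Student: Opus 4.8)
The plan is to reduce the birational and motivic assertions to the analogous statement for Rost's cycle cohomology, which has already been recorded above, and to dispose of the normalization assertion by a direct computation at $\Spec k$; the passage from a fixed level $n$ to the colimit over $n$ is then purely formal.

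Fix $n$ invertible in $k$. First I would identify $H^i_{\ur,n}$ with the $0$-th cycle cohomology $A^0(-,M_i)$ of the degree-$i$ part of the mod-$n$ Galois cohomology cycle module $M$, i.e.\ the cycle module over $k$ with $M_i(F)=H^i_\Gal(F,\mu_n^{\otimes(i-1)})$ and residue maps as in \eqref{eq:def-unramcoh1} (that this is a cycle module in the sense of Rost \cite{R} is classical). Indeed, the Gersten resolution for \'etale cohomology over a field (Bloch--Ogus and Gabber; see \cite{CT}) furnishes, for each $X\in\Sm$, a canonical flasque resolution of the Zariski sheaf $\sH^i_n$ whose complex of global sections is the Gersten complex computing $A^\bullet(X,M_i)$; taking $H^0$ yields a natural isomorphism $H^i_{\ur,n}(X)=H^0_\Zar(X,\sH^i_n)\cong A^0(X,M_i)$, compatible with the action of algebraic correspondences. (For $i<0$ both sides vanish and for $i=0$ the claim is elementary, so the content lies in the range $i\ge1$.) Restricting to $X\in\SmProj$ and invoking \cite[Corollary 6.1.3]{KS1}, as already recorded above for cycle cohomology, we conclude that $H^i_{\ur,n}$ is birational and motivic. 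Birationality can also be seen directly, bypassing cycle modules altogether: a birational morphism in $\SmProj$ restricts on connected components to birational morphisms of integral smooth projective varieties, hence induces isomorphisms of function fields, and \eqref{eq:unramcoh-K-S} then forces $H^i_{\ur,n}(f)$ to be an isomorphism.

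It remains to pass to the colimit and to verify normalization. The functor $H^i_\ur=\colim_{(n,p)=1}H^i_{\ur,n}$ is a filtered colimit of the functors just treated, the transition maps being natural transformations; a filtered colimit of isomorphisms is an isomorphism, so $H^i_\ur$ is birational, and a filtered colimit of additive functors factoring through $\Chow^\eff_\Lambda$ again factors through $\Chow^\eff_\Lambda$ (colimits in the relevant functor category being computed pointwise), so $H^i_\ur$ is motivic. For normalization, $\Spec k$ carries the trivial Zariski topology, so $H^i_{\ur,n}(\Spec k)=H^0_\Zar(\Spec k,\sH^i_n)=H^i_\Gal(k,\mu_n^{\otimes(i-1)})$; this group vanishes for $i>0$ once $k$ is separably closed, in particular when $k$ is algebraically closed, and passing to the colimit over $n$ gives $H^i_\ur(\Spec k)=0$ as well.

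The only genuinely substantial ingredient is the Gersten resolution used above to identify $H^i_{\ur,n}$ with cycle cohomology; the rest is formal. The step requiring the most care --- and the one that really underlies the word ``motivic'' --- is that this identification be natural not just for morphisms of varieties but for the full correspondence action on both sides; this, however, is part of the standard formalism of cycle modules (cf.\ \cite{R,KS1}), so no new argument is needed.
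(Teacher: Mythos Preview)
Your proof is correct and follows essentially the same route as the paper's: identify $H^i_{\ur,n}$ with the $0$-th cycle cohomology of the Galois-cohomology cycle module (via the Gersten resolution, which is what the paper's citation of \cite[Theorem 4.1.1]{CT} and \cite[(2.5)]{R} amounts to), then invoke \cite[Corollary 6.1.3]{KS1}; the passage to $H^i_\ur$ and the normalization are handled exactly as you do. Your write-up is more explicit about the Gersten identification and the compatibility with correspondences, but the argument is the same.
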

\begin{proof}
The first statement follows from
\cite[Theorem 4.1.1]{CT}
(see also \cite[(2.5)]{R}) and \cite[Corollary 6.1.3]{KS1},
and the second from the first.
The third statement is obvious from the definition.
\end{proof}

\subsection{Varieties admitting a decomposition of the diagonal}\label{sect:dec-diag}

\begin{proposition}\label{prop:equiv-UTC}
The following conditions are equivalent
for $X \in \SmProj$:
\begin{enumerate}
\item[(1)] 
The degree map induces an isomorphism
$\CH_0(X_{k(X)})_\Q \cong \Q$.
\item[(2)]
The class of the generic point of $X$
in $\CH_0(X_{k(X)})_\Q$
belongs to 
\[ \Im(\CH_0(X)_\Q \to \CH_0(X_{k(X)})_\Q).
\]
\item[(3)] 
The structure map induces an isomorphism
$h^\bir(X) \cong \Q$ 
in $\Chow^\bir_\Q$ 
\item[(4)] 
The object $h^\nor(X)$ of $\Chow^\nor_\Z$ 
is torsion in the sense of Definition \ref{def:tor-mot}.
\end{enumerate}
\end{proposition}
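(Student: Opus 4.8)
The plan is to prove the cycle of implications $(1) \Rightarrow (2) \Rightarrow (3) \Rightarrow (4) \Rightarrow (1)$, exploiting the dictionaries for $\Hom$-groups in $\Chow^\bir_\Lambda$ and $\Chow^\nor_\Lambda$ recalled in \S\ref{sect:birat-mot}. The implications $(1) \Leftrightarrow (2)$ are essentially formal: by the localization sequence for $X \times X$ (or more simply the surjection $\CH_0(X)_\Q \oplus \Z\cdot[\eta] \twoheadrightarrow \CH_0(X_{k(X)})_\Q$ coming from the complement of the generic fiber having lower dimension), the image of $\CH_0(X)_\Q \to \CH_0(X_{k(X)})_\Q$ together with the class $[\eta]$ of the generic point spans $\CH_0(X_{k(X)})_\Q$, and since $\deg[\eta]=1$ the degree map is an isomorphism precisely when $[\eta]$ already lies in the image of $\CH_0(X)_\Q$. (One should note $X$ is connected under any of these hypotheses.)

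Next I would handle $(2) \Leftrightarrow (3)$. By \cite[Lemma 2.3.7]{KS1} one has $\Chow^\bir_\Q(h^\bir(X), h^\bir(X)) = \CH_0(X_{k(X)})_\Q$, and the structure map $h^\bir(X) \to \Q$ composed with the section $\Q \to h^\bir(X)$ given by any closed point is the identity on $\Q$; the composite the other way around is the endomorphism of $h^\bir(X)$ corresponding to the image in $\CH_0(X_{k(X)})_\Q$ of a rational point pulled back from $X$, i.e.\ an element of $\Im(\CH_0(X)_\Q \to \CH_0(X_{k(X)})_\Q)$ of degree $1$. So $h^\bir(X) \cong \Q$ holds iff that composite equals $\id_{h^\bir(X)} = [\eta]$, which by the degree-$1$ normalization is exactly condition $(2)$. (With $\Q$-coefficients the idempotent completion is harmless, and $h^\bir(X)$ is already a summand of the unit.)

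Then comes $(3) \Leftrightarrow (4)$, which I expect to be the main obstacle because it requires comparing the $\Q$-linear statement in $\Chow^\bir_\Q$ with the integral statement in $\Chow^\nor_\Z$. The strategy is: $\Chow^\nor_\Lambda$ is the quotient of $\Chow^\bir_\Lambda$ by the ideal generated by the unit $\Lambda = h^\bir(\Spec k)$, so $h^\nor(X)$ is torsion in $\Chow^\nor_\Z$ iff some multiple of $\id_{h^\bir(X)}$ in $\Chow^\bir_\Z$ factors through a (finite) sum of copies of the unit $\Z$. Using the identification $\Chow^\bir_\Z(h^\bir(X), h^\bir(X)) = \CH_0(X_{k(X)})$ and $\Chow^\bir_\Z(\Z, h^\bir(X)) = \CH_0(X)$, $\Chow^\bir_\Z(h^\bir(X), \Z) = \CH_0(X_{k(X)})$-degrees, this says precisely that some positive multiple of the diagonal class $[\eta] \in \CH_0(X_{k(X)})$ lies in the image of $\CH_0(X) \to \CH_0(X_{k(X)})$ (one checks that a factorization through $\bigoplus \Z$ is the same as lying in this image, since any map $\Z \to h^\bir(X)$ is a $0$-cycle on $X$ and any map $h^\bir(X)\to\Z$ is evaluation by degree). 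Rationally that is condition $(2)$, hence $(3)$. So I would prove $(4) \Leftrightarrow (2)$ directly at the level of $0$-cycles — the content being that "$n[\eta] \in \Im(\CH_0(X) \to \CH_0(X_{k(X)}))$ for some $n>0$" is equivalent to "$[\eta] \in \Im(\cdots)_\Q$" — and invoke Lemma \ref{lem:GG} only to phrase $(4)$ cleanly if convenient. The delicate point to get right is that "torsion" for $h^\nor(X)$ must be unwound through the quotient-by-unit construction rather than naively, and that the finitely-many-copies-of-the-unit subtlety in factoring morphisms is harmless here because $h^\bir(X)$ is a retract of the unit in the relevant sense once $(2)$ holds; conversely the factorization produced by torsionness is automatically finite.
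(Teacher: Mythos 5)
The paper itself disposes of $(1)\Leftrightarrow(2)\Leftrightarrow(3)$ by a citation to \cite[Proposition 3.1.1]{KS1} and of $(2)\Leftrightarrow(4)$ by reading off the definition of torsion object together with formula \eqref{eq:hom-in-chow-nor}. Your treatment of $(2)\Leftrightarrow(3)$ (the retract-of-the-unit argument) and of $(2)\Leftrightarrow(4)$ (unwinding torsionness of $h^\nor(X)$ via $\Chow^\nor_\Z(X,X)=\coker(\CH_0(X)\to\CH_0(X_{k(X)}))$, so that $n\cdot\id=0$ means $n[\eta]\in\Im(\CH_0(X)\to\CH_0(X_{k(X)}))$) is sound and is essentially what the paper's terse proof is pointing at.

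However, your argument for $(1)\Leftrightarrow(2)$ contains a false step. You assert a surjection $\CH_0(X)_\Q\oplus\Q\cdot[\eta]\twoheadrightarrow\CH_0(X_{k(X)})_\Q$ ``coming from the complement of the generic fiber having lower dimension,'' but no such surjection holds. The localization sequence only exhibits $\CH_0(X_{k(X)})$ as a quotient of $\CH_{\dim X}(X\times X)$, which is generated by all $\dim X$-dimensional cycles on $X\times X$ dominating the second factor --- graphs of correspondences, products of subvarieties, etc. --- not merely by $\Delta_X$ and the cycles $X\times\{x\}$. The image of $\CH_0(X)_\Q$ together with $[\eta]$ is in general a small subgroup of $\CH_0(X_{k(X)})_\Q$; e.g.\ for a curve of positive genus or a surface of positive geometric genus this containment is strict. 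The correct route for $(2)\Rightarrow(1)$ is the one you already use for $(2)\Rightarrow(3)$: write $[\eta]=\alpha_{k(X)}$ with $\alpha\in\CH_0(X)_\Q$; then $\id_{h^\bir(X)}=[\eta]$ factors as $h^\bir(X)\xrightarrow{\deg}\Q\xrightarrow{\alpha}h^\bir(X)$, so $h^\bir(X)$ is a retract of the unit $\Q$, hence (being nonzero) isomorphic to $\Q$; then $(1)$ follows because $\CH_0(X_{k(X)})_\Q=\End_{\Chow^\bir_\Q}(h^\bir(X))=\End(\Q)=\Q$ via the degree map. In other words, prove the cycle $(1)\Rightarrow(2)\Rightarrow(3)\Rightarrow(1)$ and drop the ad hoc surjection; $(1)\Rightarrow(2)$ is immediate (any two degree-$1$ classes agree when the degree map is an isomorphism), and $(3)\Rightarrow(1)$ is the $\End$ computation.
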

\begin{proof}
See \cite[Proposition 3.1.1]{KS1} for (1)--(3).
Equivalence of (2) and (4) is obvious
from the definition and \eqref{eq:hom-in-chow-nor}
(see also \cite[\S 2.3]{K1}).
\end{proof}

\begin{remark}
If $k$ is an algebraically closed field
with infinite transcendental degree
over its prime subfield,
then these conditions 
are also equivalent to the following:
\begin{enumerate}
\item[(1)']
The degree map induces an isomorphism
$\CH_0(X)_\Lambda \cong \Lambda$
for either $\Lambda=\Z$ or $\Q$.
\end{enumerate}
(See \cite[Proposition 3.1.1]{KS1}.)
\end{remark}

\begin{definition}\label{def:dec-diag}
We say $X \in \SmProj$ 
admits a
\emph{decomposition of the diagonal}
if the conditions of
Proposition \ref{prop:equiv-UTC} are satisfied.
\end{definition}

This notion goes back to Bloch-Srinivas \cite{BS}.
For such $X$,
Kahn \cite[Definition 2.5]{K1}
and
Chatzistamatiou-Levine \cite[Definition 1.1]{CL}
defined a numerical invariant called 
the \emph{torsion order},
which can be written as
$\Tor_\Z^\nor(X)$
in terms of the following definition:

\begin{definition}\label{def:tor-ord}
\begin{enumerate}
\item 
Let $A$ be an object of an additive category $\sC$
that is  torsion in the sense of Definition 
\ref{def:tor-mot}.
The smallest $m \in \Z_{>0}$ such that
$m \cdot \id_A=0$ 
is called  the \emph{torsion order} of $A$.
\item
The torsion order of 
a torsion object $M$ of $\Chow_\Lambda^\eff$
(resp. $\Chow_\Lambda^\bir$, resp. $\Chow_\Lambda^\nor$)
is denoted by 
$\Tor^\eff_\Lambda(M)$
(resp. $\Tor^\bir_\Lambda(M)$,
resp. $\Tor^\nor_\Lambda(M)$).
\end{enumerate}
\end{definition}

We write $b_i(X)$ and $\rho(X)$
for the Betti and Picard numbers of $X \in \SmProj$:
\[ b_i(X) := \dim_{\Q_\ell} H^i_\et(X_{\ol{k}}, \Q_\ell),
\qquad 
   \rho(X):= \rank_\Z \NS(X_{\ol{k}})/\NS(X_{\ol{k}})_\Tor,
\]
where $\ol{k}$ is an algebraic closure of $k$,
and $\ell$ is any prime number different from $p$.

\begin{proposition}\label{prop:b1-b2-rho}
Suppose that 
$X \in \SmProj$ admits
a decomposition of the diagonal.
\begin{enumerate}
\item 
We have $b_1(X)=0$, ~$b_2(X)=\rho(X)$
and $\Pic(X)=\NS(X)$.
\item 
Suppose that $k$ is algebraically closed.
For any prime number $\ell$ invertible in $k$,
we have canonical isomorphisms
\begin{align*}
&H^1_\et(X, \Q_\ell/\Z_\ell(1)) \cong \NS(X)_{\Tor, \Z_\ell},
\\
&H^1_\ur(X)_{\Z_\ell} \cong H^1_\et(X, \Q_\ell/\Z_\ell) 
\cong H^2_\et(X, \Z_\ell)_\Tor,
\\
&H^2_\ur(X)_{\Z_\ell} \cong \Br(X)_{\Z_\ell}
\cong H^3_\et(X, \Z_\ell(1))_\Tor.
\end{align*}
\item 
Suppose that $p$ is invertible in $\Lambda$,
and put  $m:=\Tor^\nor_\Lambda(X)$.
Then we have $m  F(X)=0$
for any normalized, birational, and motivic 
functor $F : \SmProj^\op \to \lMod$.
\end{enumerate}
\end{proposition}
\begin{proof}
See \cite[Proposition 3.1.4]{KS1} for the proof of (1)
and \cite[Lemma 2.6]{K1} for (3).
(2) follows from (1), \eqref{eq:ur-coh-12} 
and the following Lemma.
\end{proof}

\begin{lemma}\label{lem:H23tor-Hur-surj}
Suppose that $k$ is algebraically closed.
Let $\ell$ be a prime number invertible in $k$.
For any $X \in \SmProj$,
we have a canonical isomorphism
\begin{equation}\label{eq:coh1}
H^1_\et(X, \Q_\ell/\Z_\ell(1)) \cong \Pic(X)_{\Tor, \Z_\ell}
\end{equation}
and canonical surjective morphisms
\begin{equation}\label{eq:coh2}
H^1_\et(X, \Q_\ell/\Z_\ell) 
\twoheadrightarrow H^2_\et(X, \Z_\ell)_\Tor,
\qquad
\Br(X)_{\Z_\ell} \twoheadrightarrow H^3_\et(X, \Z_\ell(1))_\Tor.
\end{equation}
Moreover, the first (resp. second)
morphism in \eqref{eq:coh2}
is bijective if $b_1(X)=0$
(resp. $b_2(X)=\rho(X)$).
\end{lemma}
\begin{proof}
For any $m, n \in \Z$ with $m, n>0$,
we have exact sequences of \'etale sheaves:
\begin{align*}
&
0 \to
\mu_{\ell^m} \to
\mu_{\ell^{m+n}} \to
\mu_{\ell^n} \to
0,
&
&
0 \to
\mu_{\ell^n} \to
\G_m \to
\G_m \to
0.
\end{align*}
From the second sequence we obtain an isomorphism
$H^1_\et(X, \mu_{\ell^n}) \cong \Pic(X)[\ell^n]$,
from which we deduce \eqref{eq:coh1} by taking a colimit over $n$.
The upper exact row in the following diagram is obtained in a similar way,
while the lower row is obtained by taking a limit over $m$ and a colimit over $n$
of the long exact sequence deduced from the first sequence:
\[
\xymatrix{
0 \ar[r] &
\Pic(X) \otimes \Q_\ell/\Z_\ell \ar[r]  \ar[d]&
H^2_\et(X, \Q_\ell/\Z_\ell(1)) \ar[r]  \ar@{=}[d]&
\Br(X)_{\Z_\ell} \ar[r]  \ar[d]&
0
\\
0 \ar[r] &
H^2_\et(X, \Z_\ell(1)) \otimes \Q_\ell/\Z_\ell \ar[r] &
H^2_\et(X, \Q_\ell/\Z_\ell(1)) \ar[r] &
H^3_\et(X, \Z_\ell(1))_\Tor \ar[r]  &
0.
}
\]
(The limit preserves the exactness of the lower low
since $H^i_\et(X, \mu_{\ell^m})$ is finite
for each $i, m$.)
The left and the right vertical maps are induced
since the composition $\Pic(X) \otimes \Q/\Z \to H^3_\et(X, \Z_\ell(1))_\Tor$ vanishes
(as the source is divisible and the target is finite).
The second surjection in \eqref{eq:coh2}
is obtained as the right vertical map in this diagram,
which is bijective if $b_2(X)=\rho(X)$
because so is the left vertical map under this hypothesis.

By a similar argument with different Tate twist,
we get an exact sequence
\begin{equation}\label{eq:kummer-ex-seq-etcoh}
0 \to
H^i_\et(X, \Z_\ell(r)) \otimes \Q_\ell/\Z_\ell \to
H^i_\et(X, \Q_\ell/\Z_\ell(r)) \to
H^{i+1}_\et(X, \Z_\ell(r))_\Tor \to 0
\end{equation}
for any $i, r \in \Z$.
The first surjection in \eqref{eq:coh2}
is obtained as the second arrow in this sequence
for $(i, r)=(1, 0)$,
which is bijective if $b_1(X)=0$
because the first term vanishes under this hypothesis.
(We will use \eqref{eq:kummer-ex-seq-etcoh} for other $(i, r)$ later.)
\end{proof}

\begin{remark}\label{rem:div-nor-eff}
\begin{enumerate}
\item 
If $S \in \SmProj$ is a surface
such that  $b_1(S)=0$ and $b_2(S)=\rho(S)$,
then Bloch's conjecture predicts that
$S$ should admit a decomposition of the diagonal
(see \cite[Proposition 3.1.4]{KS1}).
\item
It is obvious that
$\Tor^\nor_\Lambda(M) ~|~ 
\Tor^\bir_\Lambda(M) ~|~ 
\Tor^\eff_\Lambda(M)$
for torsion $M \in \Chow^\eff_\Lambda$.
The opposite divisibility 
does not hold in general.
(For example, we have 
$\Tor^\eff_\Lambda(M)=\Tor^\eff_\Lambda(M(1))$
but the image of $M(1)$ vanishes 
in $\Chow^\bir_\Lambda$.)
Yet, it can hold in some non-trivial cases,
as seen in Proposition \ref{prop:GG-V2} below.
\end{enumerate}
\end{remark}

\section{Torsion motives of surfaces}\label{sect:surf}

\begin{setting}\label{setting:surf}
From now on we suppose $k$ is algebraically closed
and $\Lambda=\Z[1/p]$.
Fix $S \in \SmProj$ 
admitting  a decomposition of the diagonal
and such that $\dim S=2$.
\end{setting}

\subsection{Surfaces admitting a decomposition of the diagonal}\label{sect:triv-bir-mot}


\begin{lemma}\label{lem:coh-surf}
For any prime number $\ell \not=p$, we have the following:
\begin{enumerate}
\item 
$b_0(S)=b_4(S)=1$, 
$b_2(S)=\rho(S)$,
and  $b_i(S)=0$ for any $i \not= 0, 2, 4$.
\item 
$H^0_\et(S, \Z_\ell)
=H^4_\et(S, \Z_\ell(2))=\Z_\ell$,~
$H^1_\et(S, \Z_\ell)=0$, and
$H^3_\et(S, \Z_\ell(1))$ is finite.
%
\item 
$\Pic(S)=\NS(S)$
is a finitely generated $\Z$-module;
$\NS(S)_{\Tor, \Lambda}$
and $\Br(S)_{\Lambda}$
are finite abelian groups 
canonically dual to each other.
\item 
$\CH_1(S_K) \cong \NS(S)$
for any $K \in \Fld$
and
$\CH_0(S_{\ol{K}}) \cong \Z$
for any $\ol{K} \in \Fld^\alg$.
\end{enumerate}
\end{lemma}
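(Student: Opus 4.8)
The plan is to deduce everything from the assumption that $S$ admits a decomposition of the diagonal (Setting \ref{setting:surf}), feeding it through Proposition \ref{prop:b1-b2-rho}(1) together with Poincar\'e duality and the Kummer sequence. First, for part (1): since $S$ is a connected smooth projective surface we have $b_0(S)=b_4(S)=1$ by standard $\ell$-adic cohomology; Poincar\'e duality gives $b_1(S)=b_3(S)$ and $b_2(S)=b_2(S)$; and Proposition \ref{prop:b1-b2-rho}(1) gives $b_1(S)=0$ (hence $b_3(S)=0$) and $b_2(S)=\rho(S)$. All $b_i$ with $i\notin\{0,1,2,3,4\}$ vanish since $\dim S=2$. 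This settles (1).

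For part (2), I would work with the $\ell$-adic cohomology of $S_{\ol k}=S$ (as $k=\ol k$). The group $H^0_\et(S,\Z_\ell)=\Z_\ell$ is clear, and $H^4_\et(S,\Z_\ell(2))=\Z_\ell$ by the trace/fundamental-class isomorphism for a connected smooth projective surface. Since $b_1(S)=0$ and $H^1_\et(S,\Z_\ell)$ is torsion-free (its torsion would be $H^1_\et(S,\Z_\ell)_{\mathrm{tors}}\cong H^0_\et(S,\Q_\ell/\Z_\ell(1))/(\text{divisible})$, and one checks $H^2_\et(S,\Z_\ell)$ has no $\ell$-divisible part because $H^1_\et(S,\Q_\ell)=0$), we get $H^1_\et(S,\Z_\ell)=0$. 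The two chains of isomorphisms for $H^1_\ur$ and $H^2_\ur$ then come from \eqref{eq:ur-coh-12} after passing to the colimit over $\ell$-power coefficients, combined with the Kummer sequence $0\to\Pic(S)\otimes\Z_\ell\to H^2_\et(S,\Z_\ell(1))\to T_\ell\Br(S)\to 0$ and its refinement, which identifies $H^1_\et(S,\Q_\ell/\Z_\ell)\cong\NS(S)_{\mathrm{tors}}\otimes\Q_\ell/\Z_\ell\oplus(\text{stuff from }H^2)$; using $b_1(S)=0$ kills the free part and one reads off $H^1_\ur(S)_{\Z_\ell}\cong\NS(S)_{\Tor,\Z_\ell}\cong H^2_\et(S,\Z_\ell(1))_\Tor$ and $H^2_\ur(S)_{\Z_\ell}\cong\Br(S)_{\Z_\ell}\cong H^3_\et(S,\Z_\ell(1))$ (the last because $H^3_\et(S,\Z_\ell(1))$ is torsion, being dual to $H^1_\et(S,\Z_\ell(1))$ which has rank $b_1(S)=0$).

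For part (3): $\Pic(S)=\NS(S)$ is already in Proposition \ref{prop:b1-b2-rho}(1), and $\NS(S)$ is finitely generated by the theorem of the base (or: $\NS(S)_{\mathrm{tors}}$ injects into $H^2_\et(S,\Z_\ell(1))_{\mathrm{tors}}$ which is finite, and the free part has rank $\rho(S)<\infty$). Finiteness of $\Br(S)_\Lambda$ follows since $\Br(S)_{\Z_\ell}\cong H^3_\et(S,\Z_\ell(1))$ is a finite group for each $\ell$ and vanishes for all but finitely many $\ell$ (an integral-model / comparison argument, or: $\Br(S)$ is cofinitely generated and its divisible part is controlled by $b_2(S)-\rho(S)=0$). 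The perfect duality between $\NS(S)_{\Tor,\Lambda}$ and $\Br(S)_\Lambda$ is the Poincar\'e-duality pairing $H^2_\et(S,\Z_\ell(1))_{\Tor}\times H^3_\et(S,\Z_\ell(1))\to H^5_\et(S,\Z_\ell(2))_{\Tor}\cong H^4_\et(S,\Q_\ell/\Z_\ell(2))\cong\Q_\ell/\Z_\ell$, restricted to torsion, summed over $\ell\neq p$. Finally, for part (4): $\CH_1(S_K)=\Pic(S_K)$, and a decomposition of the diagonal forces $\Pic(S_K)\cong\Pic(S)=\NS(S)$ for all $K\in\Fld$ (the degeneration of the Chow--K\"unneth-type decomposition shows $\Pic^0$ vanishes and the Picard group is constant under base change; concretely, $h^{\bir}(S)\cong\Lambda$ already yields $\CH_0(S_{K(T)})$ is generated by a rational point modulo torsion, and dually the $\CH_1$ statement follows); while $\CH_2(S_{\ol K})=\CH_0(S_{\ol K})\cong\Z$ is exactly condition (1)$'$ of the Remark following Proposition \ref{prop:equiv-UTC}, valid over any algebraically closed $\ol K\supseteq k$ because having a decomposition of the diagonal is stable under such base change and (for $\ol K$ algebraically closed) the degree map on $\CH_0$ is then an isomorphism.

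The main obstacle I anticipate is \emph{part (3)'s finiteness of $\Br(S)_\Lambda$ and the duality} — one must be careful that the $\ell$-adic statements in (2) assemble correctly over all $\ell\neq p$, in particular that $\Br(S)_{\Z_\ell}=0$ for almost all $\ell$ (so that the product over $\ell$ is genuinely finite) and that the Poincar\'e-duality pairings on torsion are perfect and compatible. In characteristic $p>0$ this requires invoking finiteness results for \'etale cohomology with $\Z_\ell$-coefficients of smooth projective varieties and the fact that we have inverted $p$; I would cite \cite{CT} and standard \'etale-cohomology references rather than reprove these. Everything else is a bookkeeping exercise in the Kummer sequence and Poincar\'e duality.
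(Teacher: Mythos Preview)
Your approach for parts (1)--(3) is essentially the paper's: these are declared to be ``standard consequences of Proposition~\ref{prop:b1-b2-rho}, using Poincar\'e duality, Kummer sequence, and \eqref{eq:ur-coh-12},'' which is precisely your plan. The paper also deduces the first statement of (4) from the vanishing of the Picard variety of $S$ (a consequence of $b_1(S)=0$), which is what you gesture at---though your parenthetical ``dually the $\CH_1$ statement follows'' from a $\CH_0$ statement is not a valid inference and should be dropped.

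The genuine gap is in the last statement of (4). You identify $\CH_2(S_{\ol K})$ with $\CH_0(S_{\ol K})$, which is incorrect in the paper's dimension-indexed conventions; the nontrivial content actually needed downstream (e.g.\ in the proof of Proposition~\ref{prop:GG-V}) is $\CH_0(S_{\ol K})_\Lambda\cong\Lambda$. More importantly, you justify this by citing condition~(1)$'$ from the Remark after Proposition~\ref{prop:equiv-UTC}, but that Remark carries the hypothesis that the base field have infinite transcendence degree over its prime subfield, which Setting~\ref{setting:surf} does not assume (e.g.\ $k=\ol{\Q}$ or $k=\ol{\F_p}$ are allowed). The decomposition of the diagonal, base-changed to $\ol K$, only shows that $\ker\bigl(\deg:\CH_0(S_{\ol K})\to\Z\bigr)$ is annihilated by some fixed integer; it does not by itself force this kernel to vanish. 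The paper closes this gap by invoking \emph{Roitman's theorem}: since $b_1(S)=0$ the Albanese variety of $S_{\ol K}$ is trivial, hence $\CH_0(S_{\ol K})_\Tor=0$, and combined with the torsion statement one obtains $\CH_0(S_{\ol K})\cong\Z$. You should replace the appeal to (1)$'$ by this Roitman argument.
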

\begin{proof}
(1) 
Proposition \ref{prop:b1-b2-rho} shows the statement for $i \le 2$.
Then the Poincar\'e duality $b_{4-i}(X)=b_i(X)$ completes the proof for other $i$.

(2) 
All assertions follow from (1),
plus a fact $H^1_\et(S, \Z_\ell)_\Tor=0$
which is seen from \eqref{eq:kummer-ex-seq-etcoh}.

(3) 
Proposition \ref{prop:b1-b2-rho} shows
the first statement.
It also shows 
$\NS(S)_{\Tor, \Z_\ell} \cong H^2_\et(S, \Z_\ell(1))_\Tor$
and $\Br(S)_{\Lambda} \cong H^3_\et(S, \Z_\ell(1))_\Tor$,
hence they are dual to each other by the Poincar\'e duality.

(4)
Proposition \ref{prop:b1-b2-rho} shows 
the vanishing of the Picard variety of $X$,
whence the first statement.
Since this implies the vanishing of the Albanese variety $\Alb_S$ of $S$,
the last statement of (4) follows from Roitman's theorem
\cite[p. 565, Consequence III]{Roit}
(which says $\CH_0(S_{\ol{K}})[m] \cong \Alb_S(\ol{K})[m]$
for any $m \in \Z$ invertible in $k$).
%
%
%
%
%
\end{proof}

\begin{lemma}\label{lem:delta}
Let $\rho:=\rho(S)$
and take $e_1, \dots, e_\rho \in \NS(S)$
such that their classes form a $\Z$-basis of 
$\NS(S)/\NS(S)_\Tor$.
Let $a_{ij} := \langle e_i, e_j \rangle \in \Z$,
where $\langle \cdot, \cdot \rangle$
denotes the intersection form on $S$.
Then $\delta:=\det((a_{ij})_{i,j=1, \dots, \rho})$ is invertible in $\Lambda$.
\end{lemma}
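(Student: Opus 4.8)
The plan is to show that the discriminant $\delta$ is invertible in $\Lambda=\Z[1/p]$ by showing that the only prime that can divide $\delta$ is $p$. Equivalently, for every prime $\ell\neq p$, I must show $\ell\nmid\delta$, i.e.\ that the intersection pairing on $\NS(S)/\NS(S)_\Tor$ remains perfect after tensoring with $\Z_\ell$. First I would pass to $\ell$-adic cohomology: by Lemma~\ref{lem:coh-surf}(1),(2), we have $H^1_\et(S,\Z_\ell)=0$, $H^2_\et(S,\Z_\ell(1))$ is a finitely generated $\Z_\ell$-module with torsion-free quotient of rank $\rho=b_2(S)$, and the cycle class map identifies $\big(\NS(S)/\NS(S)_\Tor\big)_{\Z_\ell}=\NS(S)_{\fr,\Z_\ell}$ with $H^2_\et(S,\Z_\ell(1))_\fr$ (this is exactly the statement $b_2(S)=\rho(S)$ together with the fact that the cycle map on $\NS_\fr$ is injective with torsion-free cokernel, which follows from the Kummer sequence and the structure results already recorded). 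Under this identification the intersection form $\langle\,,\,\rangle$ on $\NS_\fr$ corresponds to the cup-product pairing
\[
H^2_\et(S,\Z_\ell(1))_\fr \times H^2_\et(S,\Z_\ell(1))_\fr \longrightarrow H^4_\et(S,\Z_\ell(2))=\Z_\ell .
\]

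The key step is then Poincar\'e duality: since $S$ is a smooth projective surface over the algebraically closed field $k$ and $\ell\neq p$, the cup-product pairing $H^2_\et(S,\Z_\ell(1))\times H^2_\et(S,\Z_\ell(1))\to H^4_\et(S,\Z_\ell(2))=\Z_\ell$ is a perfect pairing of finitely generated $\Z_\ell$-modules, and restricting to the torsion-free quotient on both sides (the torsion submodule is precisely the radical, being the annihilator of a $\Z_\ell$-lattice) yields a \emph{unimodular} pairing on $H^2_\et(S,\Z_\ell(1))_\fr$. Hence its Gram determinant with respect to \emph{any} $\Z_\ell$-basis is a unit in $\Z_\ell$. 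Now $e_1,\dots,e_\rho$ form a $\Z$-basis of $\NS(S)/\NS(S)_\Tor$, so their images form a $\Z_\ell$-basis of $H^2_\et(S,\Z_\ell(1))_\fr$, and therefore $\delta=\det((a_{ij}))\in\Z_\ell^\times$. Since this holds for every prime $\ell\neq p$, the integer $\delta$ is divisible by no prime other than $p$, i.e.\ $\delta\in\Z[1/p]^\times=\Lambda^\times$.

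The main obstacle I anticipate is making the comparison in the first paragraph fully precise: one must check that the cycle class map $\NS(S)_{\fr}\otimes\Z_\ell\to H^2_\et(S,\Z_\ell(1))$ is not merely an isomorphism onto the torsion-free quotient but is \emph{compatible with the two pairings} (intersection form on the left, cup product on the right), including matching the normalizations so that $H^4_\et(S,\Z_\ell(2))$ is canonically $\Z_\ell$ via the degree/trace map and the fundamental class of a point maps to the generator. This is classical (it is the $\ell$-adic realization of the identity $\deg(D_1\cdot D_2)=\langle D_1,D_2\rangle$, cf.\ the functoriality of cycle classes under intersection product and proper pushforward), so I would simply cite the standard references for $\ell$-adic cycle classes and Poincar\'e duality on smooth projective surfaces rather than reprove it. Once that compatibility is granted, the argument is immediate from unimodularity of Poincar\'e duality modulo torsion.
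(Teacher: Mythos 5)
Your proposal is correct and follows essentially the same route as the paper: reduce to showing $\delta\in\Z_\ell^\times$ for each prime $\ell\neq p$, identify $\NS(S)_{\Z_\ell}$ with $H^2_\et(S,\Z_\ell(1))$ compatibly with the pairings (via Lemma~\ref{lem:coh-surf}), and invoke Poincar\'e duality modulo torsion, which the paper isolates and proves carefully as Proposition~\ref{prop:PD-app}. One small wording point: the cup product pairing on $H^2_\et(S,\Z_\ell(1))$ itself is not literally a perfect pairing into $\Z_\ell$ when there is torsion (torsion is forced into the radical since $\Z_\ell$ is torsion-free); what one needs, and what you then correctly use, is that the induced pairing on the torsion-free quotient is unimodular, exactly the content of Proposition~\ref{prop:PD-app}.
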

\begin{proof}
It suffices to show that $\delta \in \Z_\ell^\times$
for any prime number $\ell \not= p$.
By Proposition \ref{prop:b1-b2-rho}  
we have an isomorphism
$\NS(S)_{\Z_\ell} \cong H^2_\et(S, \Z_\ell(1))$
which is compatible with the intersection pairing
and the cup product.
Therefore it suffices to show that
the cup product induces an isomorphism
\[
H^2_\et(S, \Z_\ell(1))_\fr
\overset{\cong}{\longrightarrow}
\Hom_{\Z_\ell}(H^2_\et(S, \Z_\ell(1))_\fr, \Z_\ell),
\]
where we put $M_\fr := M/M_\Tor$ for a $\Z_\ell$-module $M$.
This follows \cite[Corollary 1.3]{Zarhin}.
%
%
%
\end{proof}

\begin{proposition}\label{prop:GG-V}
There exists a direct sum decomposition
$h^\eff(S) \cong L \oplus M \oplus N$ 
in $\Chow^\eff_\Lambda$ 
satisfying the following conditions:
\begin{enumerate}
\item 
We have isomorphisms $L \cong \Lambda \oplus \Lambda(2)$
and 
$N \cong  \Lambda(1)^{\rho(S)}$;
\item
$M$ is torsion in $\Chow_\Lambda^\eff$
in the sense of Definition \ref{def:tor-mot};
\item 
We have isomorphisms $L \cong L^\vee(2)$,
$M \cong M^\vee(2)$, and
$N \cong N^\vee(2)$
which are compatible with those in (1) 
and the Poincar\'e duality
$h^\eff(S) \cong h^\eff(S)^\vee(2)$.
\end{enumerate}
\end{proposition}

\begin{proof}
The statement without the condition (3)
is shown by Gorchinskiy-Orlov 
in (the proof of)
\cite[Proposition 2.3, Remark 2.5]{GO} when $k=\C$,
and 
the full statement by Vishik
in \cite[Proposition 4.1]{Vishik}
when $S$ is the classical Godeaux surface.
The same proof works without any essential change,
but for the sake of completeness 
we give a brief account. 

Let $\rho:=\rho(S)$
and take $e_1, \dots, e_\rho \in \NS(S)$
such that their classes form a $\Z$-basis of 
$\NS(S)/\NS(S)_\Tor$.
Let $a_{ij}$ be as in Lemma \ref{lem:delta},
and set
$A:=(a_{ij}) \in \GL_\rho(\Lambda)$.
Write $A^{-1}=(b_{ij}) \in \GL_\rho(\Lambda)$.
Take also a closed point $x_0 \in S_{(0)}$.
We then define orthogonal projectors
\begin{align*}
& 
\pi_L := [S \times x_0] + [x_0 \times S],
\quad
\pi_N := \sum_{i, j} b_{ij} [e_i \times e_j]
\quad
\in \Chow_\Lambda^\eff(S, S)=\CH_2(S \times S)_\Lambda.
\end{align*}
Set 
$L:=(S, \pi_L, 0), N:=(S, \pi_N, 0),  
M:=(S, 1-\pi_L-\pi_N) \in \Chow^\eff_\Lambda$.
Then we have (1) and (3).
Observe that (1) and Lemma \ref{lem:coh-surf} imply that
for any $\ol{K} \in \Fld^\alg$
\begin{equation}
\CH_1(M_{\ol{K}})_\Lambda=\NS(S)_{\Lambda, \Tor}
\quad \text{and}\quad 
\CH_i(M_{\ol{K}})_\Lambda=0 ~\text{for}~ i \not= 1.
\end{equation}
It then follows by Lemma \ref{lem:GG} that
$M$ satisfies (2) too.
We are done.
%
\end{proof}

The summand $M$ is not necessarily unique.
We choose one and fix it.

\begin{setting}\label{setting:def-M}
In what follows we denote by 
$M \in \Chow^\eff_\Lambda$ 
a Chow motive constructed in Proposition \ref{prop:GG-V}.
Observe that we have 
$S=M$ in $\Chow^\nor_\Lambda$,
because 
$\Lambda(r)$ vanishes in $\Chow^\nor_\Lambda$ for any $r \ge 0$
by Remark \ref{rem:comparison}.
\end{setting}

\subsection{Injectivity}

The following proposition proves the injectivity 
of the first map in \eqref{eq:Vishik-ex-seq}.

\begin{proposition}\label{prop:GG-V2}
\begin{enumerate}
\item 
We take $T \in \SmProj$ 
and consider the maps
\[
\Chow^\eff_\Lambda(T, M) 
\overset{a}{\longrightarrow} 
\Chow^\nor_\Lambda(T, M)
\overset{b}{\longrightarrow} 
\underset{i=1, 2}{\bigoplus} \Hom(H_\ur^i(M), H_\ur^i(T)),
\]
where $a$ is induced by the functor
$\Chow_\Lambda^\eff \to \Chow_\Lambda^\nor$,
and $b$ is induced by the functors $H^i_\ur$ for $i=1, 2$
using Lemma \ref{lem:bir-funct-factor} and Proposition \ref{prop:unram-coh}.
Then $a$ is bijective and $b$ is injective.
\item
We have 
\begin{equation}\label{eq:torsion-order-S}
\Tor_\Lambda^\eff(M) 
= \Tor_\Lambda^\nor(M) 
= \Tor_\Lambda^\nor(S) 
= \exp(\NS(S)_{\Tor, \Lambda})
= \exp(\Br(S)_\Lambda),
\end{equation}
where $\exp(A):= \min \{ m \in \Z_{>0} ~|~ m A = 0 \}$
for an abelian group $A$.
\end{enumerate}
\end{proposition}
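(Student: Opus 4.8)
The plan is to prove the two parts in sequence, using part (1) to get control on the torsion order of $M$ in the birational and normalized categories, and then matching everything against the explicit description of $M$ from Proposition \ref{prop:GG-V}.

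For part (1), I would first establish the bijectivity of $a$. By Remark \ref{rem:comparison}, since $p$ is invertible in $\Lambda=\Z[1/p]$, the functor $\Chow^\eff_\Lambda \to \Chow^\bir_\Lambda$ kills exactly the morphisms factoring through an object of the form $P(1)$ with $P \in \Chow^\eff_\Lambda$. So to show $a$ is injective (surjectivity being clear since $M, N$ are effective) it suffices to show that any morphism $N \to M$ in $\Chow^\eff_\Lambda$ that factors through some $P(1)$ is already zero; and to show the induced map on $\Chow^\bir_\Lambda$-Homs agrees with $\Chow^\nor_\Lambda$-Homs, I would note that $M$ and $N$ have no $\Lambda$-summand (their realizations in degrees $0$ and $4$ vanish by the construction in Proposition \ref{prop:GG-V}, since $\pi_2$ has been split off), so the quotient functor $\Chow^\bir_\Lambda \to \Chow^\nor_\Lambda$ induces an isomorphism on $\Hom(N, M)$. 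The cleanest route for the first point is to use that $M \cong M^\vee(2)$ and $N \cong N^\vee(2)$, so $\Chow^\eff_\Lambda(N, M) \cong \Chow^\eff_\Lambda(N, M^\vee(2)) \cong \CH_2(N^\vee \otimes M^\vee \otimes \cdots)$-type groups; more directly, a morphism $N \to M$ factoring through $P(1)$ would, after applying the Chow realization over $\ol{K} \in \Fld^\alg$, land in a subgroup detected by $\CH_*(P(1)_{\ol K})$, but by Proposition \ref{prop:GG-V} both $N_{\ol K}$ and $M_{\ol K}$ are concentrated in $\CH_1$ and are torsion $\NS$-groups, forcing the factoring morphism to vanish on all realizations, hence to be zero since $M$ is a direct summand of $h^\eff(S)$. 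For the injectivity of $b$, I would invoke the description $\Chow^\nor_\Lambda(N, M) \hookrightarrow \Chow^\nor_\Lambda(T, S)= \coker(\CH_0(S)_\Lambda \to \CH_0(S_{k(T)})_\Lambda)$ together with Lemma \ref{lem:yoneda}: a morphism in $\Chow^\nor_\Lambda(T, S)$ is zero iff it acts trivially on all normalized birational motivic functors, and by Proposition \ref{prop:unram-coh} the functors $H^i_\ur$ are among these; the point is that for a \emph{torsion} motive like $M$, the values of $H^1_\ur$ and $H^2_\ur$ together already detect everything. Making this last assertion precise — that $H^1_\ur$ and $H^2_\ur$ jointly separate morphisms out of $M$ — is where I expect the real work to be, and I would extract it from the method of Vishik \cite{Vishik} as packaged in the later sections (it is essentially Theorem \ref{thm:main2}), or argue it directly by noting that on the torsion motive $M$, all higher cohomological realizations are governed by $H^2_\et(S,\Z_\ell(1))_\Tor$ and $H^3_\et(S,\Z_\ell(1))$, which are exactly $H^1_\ur$ and $H^2_\ur$ by Lemma \ref{lem:coh-surf}(2).

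For part (2), the chain of equalities in \eqref{eq:torsion-order-S} is handled termwise. The equality $\exp(\NS(S)_{\Tor,\Lambda}) = \exp(\Br(S)_\Lambda)$ is immediate from Lemma \ref{lem:coh-surf}(3), since those two finite groups are canonically dual, hence have the same exponent. The equality $\Tor^\nor_\Lambda(S) = \Tor^\nor_\Lambda(M)$ follows because, by Proposition \ref{prop:GG-V}(1), the images of $h^\eff(S)$ and $M$ in $\Chow^\nor_\Lambda$ coincide: the summands $\Lambda$, $\Lambda(1)^{\rho(S)}$, $\Lambda(2)$ all die in $\Chow^\nor_\Lambda$ (the first generates the ideal by which we quotient, and the Tate twists vanish in $\Chow^\bir_\Lambda$ by Remark \ref{rem:comparison}). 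The equality $\Tor^\eff_\Lambda(M) = \Tor^\nor_\Lambda(M)$ uses the divisibilities of Remark \ref{rem:div-nor-eff}(2) for one direction ($\Tor^\nor_\Lambda(M) \mid \Tor^\bir_\Lambda(M) \mid \Tor^\eff_\Lambda(M)$), and part (1) with $T = S$, $N = M$ for the reverse: the bijectivity of $a$ shows $\Tor^\eff_\Lambda(M) = \Tor^\bir_\Lambda(M)$ directly (an isomorphism on endomorphism rings preserves the smallest annihilator of the identity), and then $\Tor^\bir_\Lambda(M) = \Tor^\nor_\Lambda(M)$ because $M$ has no $\Lambda$-summand so the quotient $\Chow^\bir_\Lambda \to \Chow^\nor_\Lambda$ is an isomorphism on $\End(M)$.

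Finally, for $\Tor^\nor_\Lambda(M) = \exp(\NS(S)_{\Tor,\Lambda})$: by Lemma \ref{lem:GG}, $\Tor^\eff_\Lambda(M)$ is the least $m$ with $m\cdot\CH_n(M_K)_\Lambda = 0$ for all $n$ and all $K \in \Fld$; over $\ol{K} \in \Fld^\alg$ the realization \eqref{eq} in the proof of Proposition \ref{prop:GG-V} gives $\CH_1(M_{\ol K})_\Lambda = \NS(S)_{\Tor,\Lambda}$ and zero otherwise, so $\exp(\NS(S)_{\Tor,\Lambda})$ divides $\Tor^\eff_\Lambda(M)$; the reverse divisibility — that this $m_0 := \exp(\NS(S)_{\Tor,\Lambda})$ actually annihilates $\id_M$, not merely the algebraically-closed realizations — is the subtle point, and I would deduce it from Lemma \ref{lem:GG}'s criterion (1)$\Leftrightarrow$(2) combined with the observation that $m_0 \cdot \id_M$, being a morphism in $\Chow^\eff_\Lambda$ whose realizations over every $\ol K \in \Fld^\alg$ vanish and which lies in a torsion endomorphism ring, must itself vanish — here one uses that passing from $K$ to $\ol K$ has torsion kernel (as in the proof of Lemma \ref{lem:GG}, (3)$\Rightarrow$(2)) but, more carefully, that the endomorphism $m_0 \cdot \id_M$ is detected by finitely-generated field extensions and one runs the alteration argument of Lemma \ref{lem:GG} to reduce to the algebraically closed case. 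I expect this last reduction — showing the naive exponent on $\ol K$-points is already the genuine torsion order of the motive — to be the main obstacle, and it is presumably where the hypothesis that $S$ is a \emph{surface} (so that $M$ is concentrated in a single Chow degree after realization) is used decisively.
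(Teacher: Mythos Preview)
Your proposal has a genuine gap in part (1), and this propagates into part (2).

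For the injectivity of $b$, you offer two routes: invoke Theorem \ref{thm:main2}, or argue directly via cohomological realizations. The first is circular: Theorem \ref{thm:main2} is proved in \S\S 4--6, and the whole machinery there (starting from Setting \ref{setting:tate-twists}) presupposes the exponent computation \eqref{eq:torsion-order-S}, which is part (2) of the very proposition you are proving. The second route is too vague: knowing that the \'etale realizations of $M$ are governed by $H^2_\et(S,\Z_\ell(1))_\Tor$ and $H^3_\et(S,\Z_\ell(1))$ does not by itself tell you that a correspondence acting trivially on these groups is the zero cycle. A cycle class is not determined by its action on cohomological realizations in general; you need an injectivity statement for the cycle map. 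The paper supplies exactly this: Bloch's map $\CH_2(S\times T)_{\Tor,\Z_\ell} \hookrightarrow H^3_\et(S\times T,\Q_\ell/\Z_\ell(2))$, followed by the injection into $H^4_\et(S\times T,\Z_\ell(2))_\Tor$ (using $H^3_\et(-,\Q_\ell)=0$), and then a K\"unneth--Poincar\'e computation identifying this torsion group with $\bigoplus_{i=2,3}\Hom(H^i_\et(S,\Z_\ell(1))_\Tor, H^i_\et(T,\Z_\ell(1))_\Tor)$. This proves the injectivity of the composite $e=b\circ a$ in one stroke, from which both the bijectivity of $a$ and the injectivity of $b$ follow formally (since $a$ is surjective by definition). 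Your separate attack on $a$ via ``factoring through $P(1)$ forces vanishing on realizations, hence is zero'' has the same defect: vanishing on realizations does not imply a cycle is zero without an Abel--Jacobi--type input.

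For part (2), once (1) is in hand the argument is immediate and does not need Lemma \ref{lem:GG}: apply the injectivity of $b\circ a$ to $T=S$ and $f=m\cdot\id_M$; its image is $m$ times the identity on $H^1_\ur(S)\oplus H^2_\ur(S)$, which vanishes iff $m$ is a multiple of $\exp(\NS(S)_{\Tor,\Lambda})$. Your attempt to extract the torsion order from Lemma \ref{lem:GG} cannot work as stated: that lemma shows $M$ is torsion iff all $\CH_n(M_K)_\Lambda$ are torsion, but gives no bound relating $\Tor^\eff_\Lambda(M)$ to the exponents of those Chow groups. You correctly flag this as ``the main obstacle,'' but the resolution is not a sharper form of Lemma \ref{lem:GG}; it is part (1) itself.
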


\begin{proof}
(1) 
(Compare \cite[Proposition 2.3]{GO}.)
We consider a commutative diagram
\[
\xymatrix{
\Chow^\eff_\Lambda(T, M) \ar@{->>}[r]^-a \ar[rd]^e &
\Chow^\nor_\Lambda(T, M) \ar[d]^b
\\
\Chow^\eff_\Lambda(T, S) \ar@{->>}[u]^c \ar[r]^-d &
\underset{i=1, 2}{\bigoplus} \Hom(H_\ur^i(M), H_\ur^i(T)). 
}
\]
The maps $a$ and $c$ are surjective by definition.
Therefore it suffices to prove the injectivity of $e$.
Take $f \in \Chow_\Lambda^\eff(T, M)$ such that $e(f)=0$.
By 
Proposition \ref{prop:b1-b2-rho} (2) and
Lemma \ref{lem:H23tor-Hur-surj}, 
this implies that,
for any prime number $\ell \not= p$, we have
\begin{equation}\label{eq:vanishing-23}
f^*=0 : H^i_\et(M, \Z_\ell(1))_{\Tor} 
\to H^i_\et(T, \Z_\ell(1))_{\Tor}
\quad \text{for $i=2, 3$}.
\end{equation}

On the other hand,
we have a commutative diagram
\[
\xymatrix{
\CH^2(M \otimes T)_{\Tor, \Z_{\ell}} 
\ar@{^{(}-^{>}}[r] \ar[rd]_{\cyc}
&
H^3_\et(M \otimes T, \Q_\ell/\Z_\ell(2)) \ar[d]^\cong 
\\
\Chow^\eff_\Lambda(T, M)_{\Z_\ell} \ar@{=}[u]  \ar[r]
&
H^4_\et(M \otimes T, \Z_\ell(2))_\Tor. 
}
\]
Here $\cyc$ is the cycle map.
The upper horizontal injective map is 
the one constructed by Bloch
(see \cite[Th\'eor\`eme 4.3]{CT2}).
The upper right triangle is commutative
by \cite[Corollaire 4]{CSS}.
The right vertical map is bijective since
we have $H^*_\et(M \otimes T, \Q_\ell(2))=0$
(as $M$ is torsion).
We have shown the injectivity of $\cyc$. 
We consider isomorphisms
\begin{align*}
H^4_\et(M \otimes T, \Z_\ell(2))_{\Tor}
&\cong 
\underset{i=2, 3}{\bigoplus} 
\Tor(H^{5-i}_\et(M, \Z_\ell(1))_{\Tor}, H^i_\et(T, \Z_\ell(1))_{\Tor}) 
\\
&\cong
\underset{i=2, 3}{\bigoplus} 
\Hom(H^i_\et(M, \Z_\ell(1))_{\Tor}, 
H^i_\et(T, \Z_\ell(1))_{\Tor})
\end{align*}
induced by the K\"unneth formula, 
Poincar\'e duality (together with Proposition \ref{prop:GG-V} (3)),
and Lemma \ref{lem:homalg} below.
Their composition sends $\alpha$ to the correspondence action
(that is, 
$\beta \mapsto \pr_{2*}(\pr_1^*(\beta) \cup \alpha)$
where $\pr_i$ are projections on $M \otimes T$).
Hence it fits in 
the right vertical arrow of a commutative diagram
\[
\xymatrix{
\CH^2(M \otimes T)_{\Tor, \Z_{\ell}} \ar@{^{(}-^{>}}[r]^{\cyc}
&
H^4_\et(M \otimes T, \Z_\ell(2))_\Tor \ar[d]^\cong
\\
\Chow^\eff_\Lambda(T, M)_{\Z_\ell}
\ar[r] \ar@{=}[u]   
&
\underset{i=2, 3}{\bigoplus} 
\Hom(H^i_\et(M, \Z_\ell(1))_{\Tor}, 
H^i_\et(T, \Z_\ell(1))_{\Tor}),
}
\]
where the lower horizontal map is induced by
the functors $H^i_\et(-, \Z_\ell(1))_\Tor$ for $i=2, 3$.
Now \eqref{eq:vanishing-23}
shows that $f=0$ in 
$\Chow^\eff_\Lambda(T, M)_{\Z_\ell}$.
We are done.

(2)
The relations
\[
\exp(\NS(S)_{\Tor, \Lambda})= 
\exp(\Br(S)_\Lambda) ~|~
\Tor_\Lambda^\nor(S) =
\Tor_\Lambda^\nor(M) ~|~
\Tor_\Lambda^\eff(M)
\]
are seen by 
Lemma \ref{lem:coh-surf} (3),
Propositions
 \ref{prop:unram-coh} and \ref{prop:b1-b2-rho} (3)
applied to $F=\Br(-)_\Lambda$,
the equality $S=M$ in $\Chow^\nor_\Lambda$, 
and Remark \ref{rem:div-nor-eff} (2), respectively.
To conclude
it suffices to apply (1) to $T=S$ and 
$f=m \cdot \id_S$ with $m \in \Z_{>0}$
to get 
$\Tor_\Lambda^\eff(M) ~|~ \exp(\NS(S)_{\Tor, \Lambda})$.
\end{proof}

We record the following corollary for later use.

\begin{corollary}\label{cor:coh-S-M}
\begin{enumerate}
\item 
If $F : \SmProj^\op \to \lMod$ is a motivic functor,
then $F(M)$ is annihilated by 
the integer in \eqref{eq:torsion-order-S}.
(We used the convention of Remark \ref{rem:extension-F}.)
\item 
We have
$H^i_\et(M, \Z_\ell) 
\cong H^i_\et(S, \Z_\ell)_\Tor$
for any $i \in \Z$ and any prime $\ell \not= p$.
\end{enumerate}
\end{corollary}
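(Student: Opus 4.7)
The plan is to derive both statements from the structural results on $M$ already established in Propositions \ref{prop:GG-V} and \ref{prop:GG-V2}.

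For (1), the argument is purely formal. Since $F$ is motivic, by Remark \ref{rem:extension-F} it extends to an additive functor $\Chow^\eff_\Lambda \to \lMod$. Proposition \ref{prop:GG-V2}(2) identifies the integer $m$ appearing in \eqref{eq:torsion-order-S} with $\Tor^\eff_\Lambda(M)$, so $m \cdot \id_M = 0$ in $\Chow^\eff_\Lambda$. Applying the additive extension of $F$ then gives $m \cdot \id_{F(M)} = F(m \cdot \id_M) = 0$, whence $F(M)$ is annihilated by $m$.

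For (2), I would apply $H^i_\et(-, \Z_\ell)$, regarded as an additive functor on $\Chow^\eff_\Lambda$ (which is legitimate because $\ell \neq p$ makes $\Z_\ell$ a $\Lambda$-algebra, so étale cohomology factors through $\Chow^\eff_\Lambda \to \Chow^\eff_{\Z_\ell}$), to the direct sum decomposition
\[
h^\eff(S) \cong \Lambda \oplus \Lambda(1)^{\rho(S)} \oplus \Lambda(2) \oplus M
\]
of Proposition \ref{prop:GG-V}(1). This yields
\[
H^i_\et(S, \Z_\ell) \cong H^i_\et(\Lambda, \Z_\ell) \oplus H^i_\et(\Lambda(1), \Z_\ell)^{\rho(S)} \oplus H^i_\et(\Lambda(2), \Z_\ell) \oplus H^i_\et(M, \Z_\ell).
\]
The étale cohomology of $\Lambda(r)$ equals $\Z_\ell$ in degree $2r$ and vanishes otherwise, as is verified from the standard decomposition $h^\eff(\P^r) \cong \Lambda \oplus \Lambda(1) \oplus \cdots \oplus \Lambda(r)$ together with the classical computation of $H^*_\et(\P^r, \Z_\ell)$. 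Hence the three Tate summands contribute free $\Z_\ell$-modules of ranks $1, \rho(S), 1$ in degrees $0, 2, 4$ respectively, and nothing in other degrees. Comparing with Lemma \ref{lem:coh-surf}, which describes both the ranks and the torsion subgroups of $H^i_\et(S, \Z_\ell)$, one sees degree by degree that the Tate contributions exhaust the free parts of $H^*_\et(S, \Z_\ell)$, leaving $H^i_\et(M, \Z_\ell) \cong H^i_\et(S, \Z_\ell)_\Tor$ for every $i$.

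The only subtle point I would need to check is that the Tate summands embed into the free parts of $H^i_\et(S, \Z_\ell)$, rather than meeting torsion nontrivially. This is guaranteed by the construction of the projector $\pi_1$ in the proof of Proposition \ref{prop:GG-V}: it is built from a $\Z$-basis $e_1, \ldots, e_{\rho(S)}$ of the \emph{free quotient} $\NS(S)/\NS(S)_\Tor$, so under the cycle class map $\NS(S)_{\Z_\ell} \to H^2_\et(S, \Z_\ell(1))$, the image of $\Lambda(1)^{\rho(S)}$ is precisely the free rank-$\rho(S)$ part of $H^2$. Modulo this verification, the argument is essentially a bookkeeping exercise, and no genuine obstacle is expected.
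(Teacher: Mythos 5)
Your proposal is correct and follows essentially the same route as the paper, which simply cites Proposition \ref{prop:GG-V2} for (1) and the decomposition of Proposition \ref{prop:GG-V} for (2); you have merely spelled out the bookkeeping. The "subtle point" you flag in (2) is in fact automatic: once the Tate summands account for the full $\Z_\ell$-rank of $H^i_\et(S,\Z_\ell)$ (Lemma \ref{lem:coh-surf}(1)), the complementary summand $H^i_\et(M,\Z_\ell)$ is a finitely generated $\Z_\ell$-module of rank zero, hence torsion, and a free direct summand never meets the torsion subgroup.
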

\begin{proof}
(1) and (2) follows from
Propositions \ref{prop:GG-V2} and \ref{prop:GG-V} respectively.
\end{proof}


\begin{problem}\label{prob:full-faith}
Let $\sC$ be the full subcategory of $\Chow^\eff_\Lambda$
consisting of torsion direct summands
of the motives of surfaces (not necessarily admitting a decomposition of the diagonal).
Is the functor $\sC \to \Chow^\nor_\Lambda$
fully faithful?
\end{problem}

We end this section with two remarks concerning the $p$-adic counterpart of our results.

\begin{remark}\label{rem:p-primary}
Assume that $p>0$, and let $S$ be as before.
\begin{enumerate}
\item[(1)]
The number $\delta$ for $S$ in Lemma \ref{lem:delta} is {\it not} necessarily
invertible in $\Z$.
For example, when $S$ is a unirational (hence supersingular) K3 surface,
$S$ admits a decomposition of the diagonal, and we have $\delta = -p^{2\sigma_0}$ for some $1 \leq \sigma_0 \leq 10$, cf.\ \cite[Chapter II, \S7.2]{I}.
This example also shows that the decomposition of motives in Proposition \ref{prop:GG-V} {\it does not} hold integrally, in general.
\item[(2)]
Assume further that $\delta$ for $S$ in Lemma \ref{lem:delta} is invertible in $\Z$; 
this is the case for an Enriques surface \cite[Chapter II, Corollary 7.3.7]{I}.
Under this assumption, one can take a torsion motive $M$ of $S$ in $\Chow^\eff_{\Z}$, and consider the canonical homomorphism
\[ b_p : \Chow^\nor_{\Z_p}(T,M) \longrightarrow
 \underset{i,j \geqq 0}{\bigoplus} \ \Hom(H_\ur^{i,j}(M)\{p\}, H_\ur^{i,j}(T)\{p\}). \]
Here $H^{i,j}_\ur(-)\{p\}$ ($i,j \geq 0$) is as in \S \ref{rem:p-in-char-pos}, which is birational and motivic, and normalized for $(i,j) \ne (0,0)$.
However, the map $b_p$ is {\it not} injective in general, even when $T=S$.
We explain this claim in what follows.
First note that $H^{i,j}_\ur(X)\{p\}$ is zero unless $(i,j)=(0,0),(1,0),(1,1),(2,1),(2,2)$ for any surface $X \in \SmProj$; see \cite[Lemma 2.1]{Su} for the vanishing of $H^{3,2}_\ur(X)\{p\}$.
For the torsion motive $M$, we have
$H^{i,j}_\ur(M)\{p\}=0$ unless $(i,j)=(1,0),(1,1),(2,1),(2,2)$.
Noting that $H^{i,j}_\ur(M)\{p\}$ is killed by $\Tor_{\Z_p}^\eff(M)$, we have
\[ H^{i,j}_\ur(M)\{p\} \cong
 \varinjlim_{n \geq 1} \ H^{i-j}_{\et}(M,W_n\Omega^j_{S,\log})
 \cong H^{i-j+1}_{\et}(S,W\Omega^j_{S,\log})_{\Tor}, \]
where the left isomorphism follows from the Gersten resolution and the purity of logarithmic Hodge-Witt sheaves \cite{GrSu}, \cite{Gr}; one also needs the fact that $\Pic(M)$ is killed by $\Tor_{\Z}^\eff(M)$ for $(i,j)=(2,1)$.
See \cite[Chapter I, 5.7.5]{I} for the right isomorphism.
Now assume that $S$ is a {\it supersingular} Enriques surface over $k$ with $\ch(k)=2$,
which satisfies $\Pic^\tau_{S/k} \cong \alpha_2$ \cite[Chapter II, 7.3.1\,(d)]{I}.
Then the unramified cohomology groups are computed as follows:
\par\smallskip
\begin{itemize}
\item[(a)]
We have $H^2(S,W\hspace{-1pt}\sO_S) \cong k$,
 on which the Frobenius operator $F$ is $0$ \cite[Chapter II, 7.3.2]{I}.
Hence $H^2_\et(S,\Z_2)=H^2(S,W\hspace{-1pt}\sO_S)^{F=1}=0$,
and $H^{1,0}_\ur(M)\{2\}=0$.
\item[(b)]
Since $\Pic^\tau_{S/k} \cong \alpha_2$, $H^1_\et(S,W\Omega_{S,\log}^1)_{2\text{-}\Tor}$ is zero,
i.e., $H^{1,1}_\ur(M)\{2\}=0$.
\item[(c)]
Since $H^2(S,W\Omega^1_S) \cong k$ \cite[Chapter II, 7.3.6\,(b)]{I},
we have $H^2_\et(S,W\Omega_{S,\log}^1) \cong \bZ/2\bZ$ or $0$.
Since $\Pic^\tau_{S/k} \cong \alpha_2$, the perfect group scheme
$\ul{H}^0_{\hspace{1pt} \et}(S,\Omega_{S,\log}^1)$ is isomorphic to $\alpha_2$,
and the \'etale part of $\ul{H}^2_{\hspace{1pt}\et}(S,\Omega_{S,\log}^1)$ is zero by the flat duality of Milne \cite[2.7\,(c)]{Mi}, i.e., $H^2_\et(S,\Omega_{S,\log}^1)=0$. Therefore $H^{2,1}_\ur(M)\{2\}=0$.
\item[(d)]
Since $H^1(S,W\Omega_S^2)=0$, 
$H^1_\et(S,W\Omega_{S,\log}^2)$ is zero, i.e., $H^{2,2}_\ur(M)\{2\}=0$.
\end{itemize}
\par\smallskip\noindent
Thus we have $H^{i,j}_\ur(M)\{2\} = 0$ for all $i,j$.
On the other hand, we have $H^2(S,\sO_S) \cong k$.
Since the functor $H^2(-,\sO_{-})$ is normalized, birational, and motivic \cite{CR},
we have $H^2(M,\sO_M) \cong k$ and $M$ is non-zero in $\Chow^\nor_{\Z_2}$.
These facts imply that $b_2$ for $T=S$ is not injective.
\end{enumerate}
\end{remark}

\section{Cohomology of the torsion motive of a surface}
We retain the assumptions and notations introduced in 
Setting \ref{setting:surf} and \ref{setting:def-M}.
We prove a few preliminary lemmas in this section.
To ease the notation, put
\begin{equation}\label{eq:N-B-S}
N_S:=\NS(S)_{\Tor, \Lambda} 
\qquad
B_S:=\Br(S)_\Lambda.
\end{equation}
For a positive integer $m$ invertible in $k$, we denote the Bockstein operator for $m$ by
\begin{equation} \label{eq:bockstein}
Q : H^i_\et(-, \mu_m) \to H^{i+1}_\et(-, \mu_m),
\end{equation}
i.e.,  the connecting map associated to 
the short exact sequence
$0 \to \mu_m \to \mu_{m^2} \to \mu_m \to 0$.

\begin{lemma}\label{lem:etcoh-fincoef}
For any $m \in \Z_{>0}$ invertible in $k$,
we have 
canonical isomorphisms
\begin{equation}\label{eq:etcoh-M-coeff-m}
H^i_\et(M, \mu_m)
\cong
\begin{cases}
0 & (i \not= 1, 2, 3), \\
N_S[m] &(i=1), \\
B_S/m  B_S & (i=3),
\end{cases}
\end{equation}
and an exact sequence
\begin{equation}\label{eq:etcoh}
\xymatrix{
0 \ar[r] &
N_S/m  N_S \ar[r] &
H^2_\et(M, \mu_m) \ar[r] &
B_S[m]  \ar[r] &
0.
}
\end{equation}
If moreover $m  N_S=0$
(so that we have $m  B_S=0$ as well by \eqref{eq:torsion-order-S}),
then we have a commutative diagram
with exact rows
\[
\xymatrix{
0 \ar[r] 
&
H^1_\et(M, \mu_m) \ar[r]^Q \ar[d]^\cong
&
H^2_\et(M, \mu_m) \ar[r]^Q \ar@{=}[d]
&
H^3_\et(M, \mu_m) \ar[r] 
&
0
\\
0 \ar[r] 
&
N_S \ar[r]
&
H^2_\et(M, \mu_m) \ar[r]
&
B_S \ar[r] \ar[u]_\cong
&
0,
}
\]
where 
the vertical isomorphisms are those in \eqref{eq:etcoh-M-coeff-m},
and the lower sequence is obtained from the exact sequence \eqref{eq:etcoh}
with the identifications $N_S/m  N_S=N_S, ~B_S[m]=B_S$.
\end{lemma}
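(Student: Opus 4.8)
The plan is to compute the étale cohomology of the torsion motive $M$ with $\mu_m$-coefficients by using the direct sum decomposition of $h^\eff(S)$ from Proposition \ref{prop:GG-V}. First I would apply the functor $H^i_\et(-,\mu_m)$ — which is motivic, hence descends to $\Chow^\eff_\Lambda$ — to the decomposition $h^\eff(S)\cong \Lambda\oplus\Lambda(1)^{\rho(S)}\oplus\Lambda(2)\oplus M$. Since $H^i_\et(\Lambda(r),\mu_m)=H^{i-2r}_\et(\Spec k,\mu_m^{\otimes(1-r)})$, which is concentrated in degree $2r$ (as $k$ is algebraically closed), the pieces $\Lambda,\Lambda(1)^{\rho(S)},\Lambda(2)$ contribute only in degrees $0,2,4$ respectively. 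Comparing with the known étale cohomology of $S$ with $\mu_m$-coefficients — read off from the Kummer sequence together with Lemma \ref{lem:coh-surf}, namely $H^1_\et(S,\mu_m)\cong\NS(S)[m]$, the extension $0\to\NS(S)/m\to H^2_\et(S,\mu_m)\to\Br(S)[m]\to 0$, and $H^3_\et(S,\mu_m)\cong\Br(S)/m$, with everything else absorbed into the Lefschetz pieces — isolates the $M$-summand and yields \eqref{eq:etcoh-M-coeff-m} and \eqref{eq:etcoh}. The vanishing $H^i_\et(M,\mu_m)=0$ for $i\neq1,2,3$ follows since $H^0,H^4$ of $S$ come entirely from $\Lambda,\Lambda(2)$ and $H^5,\dots$ vanish. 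Here I am implicitly using Lemma \ref{lem:coh-surf}(2) to identify $N_S,B_S$ with the torsion in integral $\ell$-adic $H^2,H^3$, compatibly with finite coefficients.

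Next, for the commutative diagram under the hypothesis $m N_S=0$: the top row is the Bockstein sequence for $M$ associated to $0\to\mu_m\to\mu_{m^2}\to\mu_m\to0$. The point is that $H^i_\et(M,\mu_{m^2})$ has the same description as $H^i_\et(M,\mu_m)$ once $m N_S=0$ (so $m^2$ also kills $N_S$ and $B_S$), which forces the long exact sequence to break into the short exact three-term piece displayed, with $Q$ surjective in the relevant range and the outer terms identified via \eqref{eq:etcoh-M-coeff-m}. The identification of $H^1_\et(M,\mu_m)$ with $N_S$ and of $H^3_\et(M,\mu_m)$ with $B_S$ is \eqref{eq:etcoh-M-coeff-m} with $m N_S=0$, $m B_S=0$; the middle term is unchanged; and the lower row is \eqref{eq:etcoh} rewritten with those identifications. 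To get commutativity I would check that the Bockstein $Q:H^1\to H^2$ agrees with the inclusion $N_S\hookrightarrow H^2_\et(M,\mu_m)$ coming from the Kummer/extension sequence \eqref{eq:etcoh} — this is a standard compatibility between the Bockstein for $\mu_m$ and the connecting map of the Kummer sequence mod $m$ when the ambient torsion is killed by $m$ — and similarly that $Q:H^2\to H^3$ is the projection onto $B_S$.

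The main obstacle I expect is not any single hard computation but the careful bookkeeping of how the finite-coefficient cohomology of $S$ splits off the Lefschetz summands: one must verify that the universal-coefficient/Kummer sequence for $S$ mod $m$ is compatible with the motivic decomposition, so that the extension \eqref{eq:etcoh} for $M$ is genuinely the ``middle part'' of the one for $S$. This requires knowing that $H^2_\et(S,\mu_m)$ receives $\NS(S)/m$ from $\Lambda(1)^{\rho(S)}$ together with the torsion tail, i.e. that the NS-lattice part is exactly the image of the Lefschetz projector $\pi_1$ — which is built into the construction of $\pi_1$ in the proof of Proposition \ref{prop:GG-V}. The second delicate point is the identification of the Bockstein with the Kummer connecting map in the last diagram; I would handle this by passing to the $\ell$-adic integral cohomology of $M$ (Corollary \ref{cor:coh-S-M}(2)), where $H^2_\et(M,\Z_\ell(1))_\Tor\cong N_{S,\Z_\ell}$ and $H^3_\et(M,\Z_\ell(1))\cong B_{S,\Z_\ell}$, and using the standard fact that for a finite $\Z_\ell$-module killed by $m$ the Bockstein mod $m$ computes the obstruction coming from these torsion groups. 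Everything else is a formal consequence of additivity of motivic functors and the vanishing of odd $\Q_\ell$-cohomology established in Lemma \ref{lem:coh-surf}.
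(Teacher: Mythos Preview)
Your plan is correct and very close to the paper's argument; both rest on the motivic decomposition of $h^\eff(S)$ and on the standard compatibility between the Bockstein $Q$ and the integral connecting map (which is exactly what the paper's one-line ``from the definition of $Q$'' unpacks to). The only organizational difference is that the paper does not apply the decomposition directly with $\mu_m$-coefficients as you propose, but instead invokes Corollary~\ref{cor:coh-S-M}\,(2) to get $H^i_\et(M,\Z_\ell)\cong H^i_\et(S,\Z_\ell)_\Tor$, identifies these via Lemma~\ref{lem:coh-surf}, and then reads off the $\mu_m$-cohomology from the universal-coefficient sequence
\[
0 \longrightarrow H^i_\et(M,\Z_\ell(1))/m \longrightarrow H^i_\et(M,\mu_m) \longrightarrow H^{i+1}_\et(M,\Z_\ell(1))[m] \longrightarrow 0.
\]
This bypasses precisely the bookkeeping you flag as the main obstacle---checking that the Lefschetz projector $\pi_1$ cuts out exactly the free summand $(\Z/m\Z)^{\rho}$ inside $\NS(S)/m \subset H^2_\et(S,\mu_m)$---since over $\Z_\ell$ the identification of $\Im(\pi_1)$ with the free part of $H^2_\et(S,\Z_\ell(1))$ is already packaged into Corollary~\ref{cor:coh-S-M}\,(2). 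Your route works too, but you would in effect be reproving that corollary with finite coefficients; routing through $\Z_\ell$ first is cleaner. For the second half, your argument that $Q$ factors as the integral Bockstein followed by reduction, and hence matches the maps in \eqref{eq:etcoh}, is exactly the intended one.
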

\begin{proof}
The first statement follows from 
Proposition \ref{prop:b1-b2-rho},
Lemma \ref{lem:coh-surf} and
Corollary \ref{cor:coh-S-M} (2),
and the second from the definition of $Q$.
\end{proof}

\begin{lemma}\label{lem:split}
Suppose that $m_0 \in \Z_{>0}$
is invertible in $k$ and $m_0 N_S=0$. 
Put $m:=m_0^2$
and let $Q$ be
the Bockstein operator \eqref{eq:bockstein} for $m$.
Then there exists a subgroup
$\wt{B}_S$ of $H^2_{\et}(M, \mu_{m})$ fitting into 
a commutative diagram with exact row
\begin{equation}\label{eq:et-coh-M}
\xymatrix{
& N_S \ar[d]_{\cong} 
& \wt{B}_S \ar@{^{(}-^{>}}[d] \ar[dr]^{\cong} & &
\\
0 \ar[r] &
H^1_{\et}(M, \mu_{m}) \ar[r]_Q &
H^2_{\et}(M, \mu_{m}) \ar[r]_Q &
H^3_{\et}(M, \mu_{m}) \ar[r] &
0.}
\end{equation}
In particular, we have an isomorphism
\begin{equation}\label{eq:etcoh2-fincoef}
H^2_{\et}(M, \mu_{m}) \cong
QN_S \oplus \wt{B}_S,
\end{equation}
where we identified $N_S=H^1_{\et}(M, \mu_{m})$.
\end{lemma}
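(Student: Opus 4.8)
The plan is to peel off the formal parts, which come straight from Lemma~\ref{lem:etcoh-fincoef}, and then concentrate on the one real point, namely splitting the Bockstein $Q$.

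Since $m=m_0^2$ and $m_0N_S=0$ we have $mN_S=0$, so the last part of Lemma~\ref{lem:etcoh-fincoef} applies and already furnishes the exact bottom row of \eqref{eq:et-coh-M}, i.e.\ $0\to H^1_\et(M,\mu_m)\xrightarrow{Q}H^2_\et(M,\mu_m)\xrightarrow{Q}H^3_\et(M,\mu_m)\to 0$, together with the canonical isomorphism $N_S\cong H^1_\et(M,\mu_m)$. Suppose one has produced a subgroup $\wt B_S\subseteq H^2_\et(M,\mu_m)$ on which $Q$ restricts to an isomorphism $\wt B_S\xrightarrow{\ \cong\ }H^3_\et(M,\mu_m)$. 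Then the lemma follows formally: take the middle vertical of \eqref{eq:et-coh-M} to be the inclusion and the diagonal to be $Q|_{\wt B_S}$, so that the displayed triangle commutes by construction; and since $\wt B_S\cap\ker\bigl(Q\colon H^2_\et(M,\mu_m)\to H^3_\et(M,\mu_m)\bigr)=\ker(Q|_{\wt B_S})=0$, while every $x\in H^2_\et(M,\mu_m)$ differs from some element of $\wt B_S$ by an element of $\ker(Q)=Q\bigl(H^1_\et(M,\mu_m)\bigr)$, one obtains $H^2_\et(M,\mu_m)=Q\bigl(H^1_\et(M,\mu_m)\bigr)\oplus\wt B_S$, which is \eqref{eq:etcoh2-fincoef}. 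So the whole task is to construct a section of the surjection $Q\colon H^2_\et(M,\mu_m)\to H^3_\et(M,\mu_m)$, and this is where the difficulty sits: the extension \eqref{eq:etcoh} of $B_S$ by $N_S$ need not split just because its three terms are killed by $m_0$, $m$, and $m_0$, so something beyond \eqref{eq:etcoh} must go into splitting $Q$.

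The input I would bring in is a formality statement for the $\ell$-adic cohomology of $M$; since everything decomposes over primes it suffices to build $\wt B_S$ one prime $\ell\neq p$ at a time, working with the $\ell$-primary part. By Lemma~\ref{lem:coh-surf} and Corollary~\ref{cor:coh-S-M}(2), $R\Gamma_\et(M,\Z_\ell(1))$ is a perfect complex of $\Z_\ell$-modules with cohomology concentrated in degrees $2$ and $3$, equal there to the finite groups $N_\ell:=H^2_\et(M,\Z_\ell(1))$ and $B_\ell:=H^3_\et(M,\Z_\ell(1))$, both killed by $m_0$ (using $\exp(\Br(S)_\Lambda)\mid m_0$ from \eqref{eq:torsion-order-S}). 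As $\Z_\ell$ has global dimension $1$, the obstructions to splitting the Postnikov tower of this complex lie in $\Ext^{\ge 2}_{\Z_\ell}(-,-)=0$, so $R\Gamma_\et(M,\Z_\ell(1))\simeq N_\ell[-2]\oplus B_\ell[-3]$ in $D^b(\Z_\ell)$. Applying $-\otimes^{L}_{\Z_\ell}\Z/m$ — using $\mu_m\simeq\Z_\ell(1)\otimes^{L}_{\Z_\ell}\Z/m$ (on $\ell$-primary parts) and the projection formula for this perfect complex — gives a decomposition $R\Gamma_\et(M,\mu_m)\simeq(N_\ell\otimes^{L}_{\Z_\ell}\Z/m)[-2]\oplus(B_\ell\otimes^{L}_{\Z_\ell}\Z/m)[-3]$ that is compatible with $Q$, because $Q$ is induced from $R\Gamma_\et(M,\Z_\ell(1))$ by tensoring over $\Z_\ell$ with the distinguished triangle $\Z/m\to\Z/m^2\to\Z/m\to\Z/m[1]$.

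It remains to understand the Bockstein on one summand $A\otimes^{L}_{\Z_\ell}\Z/m$ with $A$ a finite $\Z_\ell$-module killed by $m$: this complex is $[A\xrightarrow{\,0\,}A]$ in degrees $-1$ and $0$, and a short chase in the long exact sequence tying together $A\otimes^{L}_{\Z_\ell}\Z/m$, $A\otimes^{L}_{\Z_\ell}\Z/m^2$, and $A\otimes^{L}_{\Z_\ell}\Z/m$ — whose maps induced by $\Z/m\hookrightarrow\Z/m^2$ are the identity in degree $-1$ and $0$ in degree $0$ — shows that $Q$ induces an isomorphism $H^{-1}(A\otimes^{L}_{\Z_\ell}\Z/m)\xrightarrow{\ \cong\ }H^0(A\otimes^{L}_{\Z_\ell}\Z/m)$. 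Feeding this back in: on the $N_\ell$-summand, $Q$ identifies its contribution to $H^1_\et(M,\mu_m)$ with its contribution to $H^2_\et(M,\mu_m)$ (recovering injectivity of $Q$ on $H^1$), and on the $B_\ell$-summand $Q$ identifies its contribution to $H^2_\et(M,\mu_m)$ with its contribution to $H^3_\et(M,\mu_m)$; summing over $\ell$, the subgroup $\wt B_S$ given by the $B_\ell$-summands of $H^2_\et(M,\mu_m)$ is then mapped isomorphically onto $H^3_\et(M,\mu_m)$ by $Q$, completing the argument. (If one prefers to avoid the derived category, take $\wt B_S$ to be the image of $H^2_\et(M,\mu_{m_0})\to H^2_\et(M,\mu_m)$ induced by $\mu_{m_0}\hookrightarrow\mu_m$; naturality of connecting maps identifies $Q$ composed with this map with a connecting homomorphism for $0\to\mu_{m_0^2}\to\mu_{m_0^3}\to\mu_{m_0}\to 0$, which is onto $H^3_\et(M,\mu_m)$ since the next map $H^3_\et(M,\mu_{m_0^2})\to H^3_\et(M,\mu_{m_0^3})$ in its long exact sequence vanishes, and an order count via \eqref{eq:etcoh} and Lemma~\ref{lem:etcoh-fincoef} at level $m_0$ shows $Q|_{\wt B_S}$ is also injective.)
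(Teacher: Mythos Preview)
Your proof is correct. The main route you take---formality of the perfect complex $R\Gamma_\et(M,\Z_\ell(1))$ over the hereditary ring $\Z_\ell$, then transporting the Bockstein through the resulting splitting $N_\ell[-2]\oplus B_\ell[-3]$---is genuinely different from the paper's argument. The paper does precisely what you sketch in your parenthetical alternative: it sets $\wt B_S:=\Im\bigl(\iota:H^2_\et(M,\mu_{m_0})\to H^2_\et(M,\mu_m)\bigr)$, observes that this equals $\ker\bigl(\pi:H^2_\et(M,\mu_m)\to H^2_\et(M,\mu_{m_0})\bigr)$ by the long exact sequence for $0\to\mu_{m_0}\to\mu_m\to\mu_{m_0}\to 0$, and then reads off surjectivity and injectivity of $Q|_{\wt B_S}$ from two commuting squares comparing the Bocksteins $Q$ (for $m$) and $Q_0$ (for $m_0$) via \eqref{eq:etcoh}. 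Your derived-category argument is cleaner conceptually and explains \emph{why} the splitting exists (no $\Ext^2$ over a PID), at the cost of invoking formality and a projection-formula identification $R\Gamma_\et(M,\mu_m)\simeq R\Gamma_\et(M,\Z_\ell(1))\otimes^L\Z/m$; the paper's approach stays entirely within Lemma~\ref{lem:etcoh-fincoef} and a single coefficient-change diagram, so it is more elementary and self-contained. Either way one lands on the same explicit subgroup $\wt B_S$.
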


\begin{proof}
Put
$H^i_{\et, n}(M):= H^i_{\et}(M, \mu_n)$.
We consider a commutative diagram
with exact rows and columns 
\[
\xymatrix{
& &
H^2_{\et, m_0}(M) \ar[r] \ar[d] &
B_S[m_0] \ar[r] \ar[d]^\cong &
0
\\
0 \ar[r] &
N_S/m N_S \ar[r] \ar[d]^\cong &
H^2_{\et, m}(M) \ar[r] \ar[d] &
B_S[m] \ar[r] & 
0
\\
0 \ar[r] &
N_S/m_0  N_S \ar[r]  &
H^2_{\et, m_0}(M). & 
& 
}
\]
All rows are from \eqref{eq:etcoh}.
The left and right vertical bijections come from 
$m_0N_S = mN_S = 0$ and $B_S[m_0]=B_S[m_0]=B_S$,
which follows from our assumption on $m_0$ and $m$.
We now rewrite it using the latter half of
Lemma \ref{lem:etcoh-fincoef}:
\[
\xymatrix{
& &
H^2_{\et, m_0}(M) \ar[r]^{Q_0} \ar[d]^\iota &
H^3_{\et, m_0}(M) \ar[r] \ar[d]^\cong &
0
\\
0 \ar[r] &
H^1_{\et, m}(M) \ar[r]^Q \ar[d]^\cong &
H^2_{\et, m}(M) \ar[r]^Q \ar[d]^\pi &
H^3_{\et, m}(M) \ar[r] & 
0
\\
0 \ar[r] &
H^1_{\et, m_0}(M) \ar[r]^{Q_0} &
H^2_{\et, m_0}(M), & 
}
\]
where $Q_0$ denotes the Bockstein operator \eqref {eq:bockstein} for $m_0$.
We then obtain the assertion from the middle horizontal exact row
by putting $\wt{B}_S:=\Im(\iota)=\ker(\pi)$.
\end{proof}

\section{Vishik's method}\label{sect:vishik}

In \cite[\S 4]{Vishik},
Vishik obtained an exact sequence
that computes the motivic cohomology with $\Z/5\Z$ coefficients
of the classical Godeaux surface over $\C$.
In this section we apply his method to a general surface 
having a decomposition of the diagonal
over an arbitrary algebraically closed field.
The main result of this section is
Theorem \ref{thm:vishik} below.

We retain the assumptions and notations introduced in 
Setting \ref{setting:surf} and \ref{setting:def-M}.
We also fix the following data:

\begin{setting}\label{setting:tate-twists}
Fix $m_0 \in \Z_{>0}$ that is invertible in $k$ and
divisible by \eqref{eq:torsion-order-S}.
Put $m:=m_0^2$.
We also fix an isomorphism $\Z/m\Z \cong \mu_m$
by which we will identify 
\'etale and Galois cohomology
with different Tate twists.
We write
\[
H^i_{\et}(-):=H^i_\et(-, \Z/m\Z),
\qquad
H^i_{\Gal}(-):=H^i_\Gal(-, \Z/m\Z).
\]
\end{setting}

Using the isomorphism from 
\eqref{eq:ur-coh-12}, \eqref{eq:N-B-S} and \eqref{eq:etcoh-M-coeff-m},
we identify 
\begin{align}
\label{eq:etcoh13-fincoef}
H^1_\ur(S) \cong H^1_\et(M) \cong N_S,
\qquad
H^2_\ur(S) \cong H^3_\et(M) \cong B_S,
\end{align}
which are finite abelian groups dual to each other
by Lemma \ref{lem:coh-surf} (3).

\subsection{Motivic cohomology}

For $X \in \Sm$, $K \in \Fld$,
and $a, b \in \Z$ with $b \ge 0$,
we write
\begin{equation} \label{eq:def-mot-coh}
H^{a, b}_\sM(X_K, \Lambda):= H^a_\Zar(X_K, \Lambda(b)),
\qquad
H^{a, b}_\sM(X_K):= H^a_\Zar(X_K, \Z/m\Z(b)).
\end{equation}
where $\Lambda(b)$ and $\Z/m\Z(b)$ are Voevodsky's motivic complex
\cite[Definition 3.1]{MVW}
with coefficients in $\Lambda$ and $\Z/m\Z$, respectively.
We put
$H^{a, b}_\sM(X_K, \Lambda)
=H^{a, b}_\sM(X_K)=0$ if $b<0$.
We recall the following fundamental facts:
\begin{align}
&
\label{eq:motcoh1}
H^{a, b}_\sM(X_K, \Lambda)=
H^{a, b}_\sM(X_K)=0
\quad \text{if $a>2b$ or $a>b+\dim X$}.
\\
&
\label{eq:motcoh2}
H^{2b, b}_{\sM}(X_K, \Lambda)
\cong \CH^b(X_K)_\Lambda,
\quad
H^{2b, b}_{\sM}(X_K)
\cong \CH^b(X_K)/m \CH^b(X_K),
\\
&
\label{eq:motcoh3}
H^{a, b}_{\sM}(X_K)
\cong H^a_{\et}(X_K)
\quad \text{if $a \le b$}.
\end{align}
The case $a>2b$ of \eqref{eq:motcoh1}
and \eqref{eq:motcoh2} are 
consequences of Voevodsky's comparison theorem
on the motivic cohomology with 
Bloch's higher Chow groups
(see \cite[Corollary 19.2, Theorem 19.3]{MVW}).
The second case of \eqref{eq:motcoh1}
is immediate from the definition
(see \cite[Theorem 3.6]{MVW}).
The former Beilinson-Lichtenbaum conjecture \eqref{eq:motcoh3} 
is proved in \cite[Theorem 6.17]{V-lcoef}
as a consequence of 
Rost-Voevodsky's norm residue isomorphism theorem \cite[Theorem 6.16]{V-lcoef},
based on the previous works of Suslin-Voevodsky \cite{SV}
and Geisser-Levine \cite{GL}.
%

If we fix $a, b$ and $K$ and let $X$ varies,
then $H^{a, b}_\sM(X_K, \Lambda)$ defines a motivic functor.
This follows from
\cite[Propositions 14.16 and 20.1]{MVW},
as
$H^{a, b}_\sM(X_K, \Lambda)$ is the colimit of 
$H^{a, b}_\sM(X \times U, \Lambda)$ 
where $U$ ranges over all smooth schemes over $k$ with function field $K$.
The same is true of $H^{a, b}_\sM(X_K)$.
Therefore the notations and results discussed in the previous paragraph
are extended to motives, 
cf. Remark \ref{rem:extension-F}.

We now state the main result of this section.

\begin{theorem}\label{thm:vishik}
For any $a \in \Z$ and $K \in \Fld$, 
we have an exact sequence
\[
0 
\to H^{a, a-2}_{\sM}(M_K)
\to \bigoplus_{i=1, 2} 
H^{a-i-1}_{\Gal}(K) \otimes H^i_{\ur}(S)
\overset{\Psi}{\to} H^{a-1}_{\ur}(K(S)/K)
\to 0.
\]
Here $\Psi$ is given by 
$\Psi(a \otimes b) = \pr_1^*(a) \cup \pr_2^*(b)$,
where $\pr_i$ denotes the respective projectors 
on $\Spec(K) \times S$.
(The last term is
the unramified cohomology over $K$ and not over $k$.)
\end{theorem}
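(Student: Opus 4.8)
The plan is to run Vishik's spectral sequence argument from \cite[\S 4]{Vishik} with the general surface $S$ (and auxiliary base field $K$) in place of the classical Godeaux surface over $\C$. First I would set up the motivic cohomology of the product $M_K \times S_K = (M \times S)_K$ by filtering via the décomposition de la diagonale. More precisely, recall from Proposition \ref{prop:GG-V} that $h^\eff(S)\cong \Lambda \oplus \Lambda(1)^{\rho} \oplus \Lambda(2) \oplus M$. Tensoring $M_K$ with this decomposition and using $M\cong M^\vee(2)$ gives a formula for $\Chow^\eff_\Lambda(\Lambda(a-2+?), M\otimes S)$-type groups; but the cleaner route, which is Vishik's, is to use the slice/coniveau filtration on $S$ directly. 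One computes $H^{a,b}_\sM((M\times S)_K)$ by the localization/coniveau spectral sequence for $S_K$: the fibre over the generic point $\eta$ of $S$ contributes $H^{a,b}_\sM(M_{K(S)})$, and since $M$ is a torsion motive with cohomology concentrated in degrees $1,2,3$ (Lemma \ref{lem:etcoh-fincoef}, Corollary \ref{cor:coh-S-M}), the Beilinson–Lichtenbaum identification \eqref{eq:motcoh3} makes all the relevant motivic groups of $M$ over a field agree with étale cohomology, hence with $N_S[m]$, $H^2_\et(M)$, $B_S/mB_S$.

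Next I would isolate the piece of weight $a-2$. By \eqref{eq:motcoh1}, $H^{a,a-2}_\sM(M_K)$ sits at the edge $a=2b+2$, i.e. just above the "diagonal": these are exactly the groups Vishik shows are governed by a short exact sequence. Concretely, the coniveau spectral sequence for $M\times S$ over $K$ degenerates enough (again because $M$'s cohomology is supported in a narrow range of degrees and $S$ is a surface, so the coniveau filtration has length $\le 2$) that the $E_2$-page in the relevant total degree has only two nonzero entries, coming from codimension-$1$ and codimension-$2$ points of $S$ — these produce the two summands $H^{a-2}_{\Gal}(K)\otimes H^1_\ur(S)$ and $H^{a-3}_{\Gal}(K)\otimes H^2_\ur(S)$ via \eqref{eq:etcoh13-fincoef} together with the duality $H^1_\ur(S)\cong N_S$, $H^2_\ur(S)\cong B_S$. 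Here one uses Lemma \ref{lem:coh-surf} to know $\NS(S)_\Tor$ and $\Br(S)$ are the only torsion invariants in play, and Lemma \ref{lem:split}/\eqref{eq:etcoh2-fincoef} to split $H^2_\et(M)$ appropriately — this splitting is what makes the middle term a genuine direct sum rather than an extension. The third term $H^{a-1}_\ur(K(S)/K)$ arises as the cokernel, i.e. as the "unramified" part that survives restriction to the generic point: it is $H^{a-1}_\sM(K(S))$ in the appropriate weight modulo the image of the global classes, which by Bloch–Ogus/\eqref{eq:motcoh3} is precisely unramified cohomology of $K(S)/K$.

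Then I would identify the map $\Psi$ explicitly. The middle term maps to $H^{a-1}_\ur(K(S)/K)$ by cup product with the pulled-back generator classes in $H^i_\ur(S)$; the description $\Psi(a\otimes b)=\pr_1^*(a)\cup\pr_2^*(b)$ is forced by functoriality of the coniveau spectral sequence under the external product $\Spec K \times S$, since the $E_2$-differentials are compatible with products. Surjectivity of $\Psi$ onto $H^{a-1}_\ur(K(S)/K)$ and exactness in the middle are read off the spectral sequence once one knows the only contributions to $H^{a-1}_\ur(K(S)/K)$ come from these two product classes — this is where the hypothesis that $S$ has a decomposition of the diagonal is essential, as it kills the "transcendental" part (there is no $H^0(\Omega^1), H^0(\Omega^2)$, and $b_1=0$, $b_2=\rho$ by Lemma \ref{lem:coh-surf}), so that $H^*_\ur(S)$ is entirely torsion and entirely accounted for by $N_S$ and $B_S$. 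Left-exactness, i.e. injectivity of $H^{a,a-2}_\sM(M_K)\hookrightarrow \bigoplus_i H^{a-i-1}_\Gal(K)\otimes H^i_\ur(S)$, follows because the map to the generic fibre is injective on this edge piece (the lower coniveau pieces that could form a kernel vanish by dimension reasons, \eqref{eq:motcoh1} applied on $S$).

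The main obstacle I expect is the bookkeeping in the coniveau spectral sequence: one must verify that, in the single total degree $a$ and weight $a-2$ that matters, the $E_2$-page really collapses to the two entries claimed, with no stray differentials in or out, and that the resulting two-step filtration splits off $H^{a,a-2}_\sM(M_K)$ as a subobject with the stated quotient. This requires carefully tracking how the torsion motive $M$ interacts with the Gersten resolution on $S_K$, and invoking \eqref{eq:motcoh1}, \eqref{eq:motcoh3}, Lemma \ref{lem:etcoh-fincoef}, and \eqref{eq:etcoh2-fincoef} at precisely the right spots — Vishik did this for $m=5$ and $S$ Godeaux, and the claim is that nothing in his argument used more than: $M$ torsion, $\dim S=2$, and the cohomology of $M$ being $N_S, H^2_\et(M), B_S$ in degrees $1,2,3$. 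Assembling these into the clean three-term exact sequence, and checking the identification of $\Psi$ with the external cup product, is the substance of the proof.
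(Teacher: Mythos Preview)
Your overall narrative (Vishik's method generalizes; the torsion motive $M$ has cohomology only in degrees $1,2,3$; the splitting $H^2_\et(M)\cong QN_S\oplus\tilde B_S$ from Lemma~\ref{lem:split} matters) is sound, but the mechanism you describe is not the one the paper uses, and as written your sketch has a real gap.

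First, a point of confusion: there is no product $M\times S$ in play. The object whose motivic cohomology is being computed is simply $M_K$, a direct summand of $h^\eff(S)_K$. Your references to ``the coniveau spectral sequence for $M\times S$ over $K$'' and to ``codimension-$1$ and codimension-$2$ points of $S$'' producing the two summands $H^{a-i-1}_\Gal(K)\otimes H^i_\ur(S)$ do not match what is actually happening.

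The paper does not filter by codimension in $S$. It filters by \emph{weight}, via the map
\[
\tau:H^{a,b}_\sM(M_K)\to H^{a,b+1}_\sM(M_K)
\]
coming from the chosen isomorphism $\Z/m\Z\cong\mu_m$ (Setting~\ref{setting:tate-twists}). Using the long exact sequence of Theorem~\ref{thm:TY}(1), one first computes $H^{a,a}_\sM(M_K)=H^a_\et(M_K)$ by a K\"unneth argument (Proposition~\ref{prop:etcoh-dec}), obtaining the four-piece decomposition \eqref{eq:coniveau0}. One then identifies the image of $\tau:H^{a,a-1}_\sM(M_K)\hookrightarrow H^{a,a}_\sM(M_K)$ inside this (Proposition~\ref{prop:coniveau1}), and finally the image of $\tau^2:H^{a,a-2}_\sM(M_K)\hookrightarrow H^{a,a}_\sM(M_K)$ (Proposition~\ref{prop:coniveau2}); the theorem is assembled from these two steps.

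The tool you are missing is the Bockstein operator $Q$. The key facts are: (i) $\tau$ and $Q$ commute and $Q$ acts trivially on the $H^*_\Gal(K)$ factor (Lemma~\ref{lem:comm-bock}); (ii) hence the summands $H^{a-2}_\Gal(K)\otimes QN_S$ and $H^{a-3}_\Gal(K)\otimes B_S=H^{a-3}_\Gal(K)\otimes Q\tilde B_S$ lie in $Q(H^{a-1}_\et(M_K))$ and therefore in the image of $\tau$; and (iii) since $M$ is torsion, the complex $(H^{\bullet,b}_\sM(M_K),Q)$ is \emph{exact}, which is what drives the second step and ultimately yields surjectivity of $\Psi$. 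None of (i)--(iii) appear in your sketch, and without them one cannot determine which pieces of \eqref{eq:coniveau0} survive two applications of $\tau$. The ``coniveau collapses for dimension reasons'' heuristic you offer does not substitute for this; the coniveau spectral sequence is linked to the $\tau$-filtration only \emph{a posteriori}, via Theorem~\ref{thm:TY}(2), and is not used directly in the argument.
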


\subsection{\'Etale cohomology}

\begin{proposition}\label{prop:etcoh-dec}
For any 
$N \in \Chow_\Lambda$ and
$K \in \Fld$, we have an isomorphism
\begin{equation}\label{eq:spec-decomp}
H^*_{\Gal}(K) \otimes H^*_{\et}(N)
\cong H^*_{\et}(N_K).
\end{equation}
\end{proposition}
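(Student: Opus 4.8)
The plan is to realise the displayed map as an external cup product, reduce by a pseudo-abelian argument to the motive of a single smooth projective variety, and there apply the K\"unneth formula. Concretely, for $N=(X,\pi,r)\in\Chow_\Lambda$ I would pull back along the two projections of $\Spec K\times X$ and cup, obtaining a natural transformation
\[
\varphi_N\colon H^*_\Gal(K)\otimes_{\Z/m\Z}H^*_\et(N)\longrightarrow H^*_\et(N_K),\qquad
\alpha\otimes\beta\longmapsto\pr_1^*\alpha\cup\pr_2^*\beta,
\]
restricted through the projectors and carrying along the twist. Both sides are additive functors of $N$ on $\Chow_\Lambda$ (the target because base change $M\mapsto M_K$ is additive and \'etale cohomology with $\Z/m\Z$-coefficients is a motivic functor, as already used in Lemma~\ref{lem:etcoh-fincoef}), and $\varphi$ is a morphism between them. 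A morphism of additive functors on a pseudo-abelian category which is invertible on a family of objects generating the category under direct sums and direct summands is invertible everywhere; since every object of $\Chow_\Lambda$ is a direct summand of some $h^\eff(X)(r)$ with $X\in\SmProj$, and the twist $(r)$ merely shifts degrees once one uses the identification $\mu_m^{\otimes r}\cong\Z/m\Z$ from Setting~\ref{setting:tate-twists}, it suffices to prove that $\varphi_{h^\eff(X)}$ is an isomorphism for every $X\in\SmProj$.

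For such $X$, since $X$ is proper over the algebraically closed field $k$, proper base change shows that the sheaves $R^qf_*\Z/m\Z$ along $f\colon X_K\to\Spec K$ are constant with value $H^q_\et(X,\Z/m\Z)$ (trivial Galois action, since $X$ is defined over $k$), so there is a K\"unneth quasi-isomorphism
\[
R\Gamma_\et(X_K,\Z/m\Z)\ \simeq\ R\Gamma_\et(X,\Z/m\Z)\otimes^{\mathbf{L}}_{\Z/m\Z}R\Gamma_\Gal(K,\Z/m\Z)
\]
in $D(\Z/m\Z)$, compatible with $\varphi$. The crucial point is then the flatness assertion that \emph{$H^i_\Gal(K,\Z/m\Z)$ is a flat --- hence, $\Z/m\Z$ being Artinian, projective --- $\Z/m\Z$-module for every $i$}. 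Granting this, $R\Gamma_\Gal(K,\Z/m\Z)$ is formal, i.e.\ $R\Gamma_\Gal(K,\Z/m\Z)\simeq\bigoplus_iH^i_\Gal(K,\Z/m\Z)[-i]$ in $D(\Z/m\Z)$, the derived tensor product becomes an ordinary one, and taking cohomology gives
\[
H^n_\et(X_K,\Z/m\Z)\ \cong\ \bigoplus_{p+q=n}H^p_\Gal(K,\Z/m\Z)\otimes_{\Z/m\Z}H^q_\et(X,\Z/m\Z),
\]
the isomorphism being induced by $\varphi$. This yields the reduced statement, hence the proposition.

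The flatness is where I expect the real work to lie, although it follows cleanly from the Rost--Voevodsky norm residue theorem. Fix a prime $\ell\mid m$. Since $k$ is algebraically closed, $\Gal_K$ acts trivially on $\mu_{\ell^\nu}$, so all Tate twists coincide; by norm residue $H^i_\Gal(K,\mu_{\ell^\nu}^{\otimes j})\cong K^M_i(K)/\ell^\nu$, whence $H^i_\Gal(K,\mu_{\ell^\infty}^{\otimes j})\cong K^M_i(K)\otimes_\Z\Q_\ell/\Z_\ell$ is divisible in every degree. A divisible $\ell$-primary torsion group is a direct sum of copies of $\Q_\ell/\Z_\ell$, so $H^i_\Gal(K,\Z/\ell^\nu\Z)\cong H^i_\Gal(K,\mu_{\ell^\infty}^{\otimes j})[\ell^\nu]$ is free over $\Z/\ell^\nu\Z$; decomposing $\Z/m\Z$ into primary factors gives flatness over $\Z/m\Z$. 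This input is essential --- with an ordinary rather than derived tensor product the K\"unneth isomorphism would otherwise fail in general. The remaining ingredients --- the pseudo-abelian reduction, the K\"unneth quasi-isomorphism, and the bookkeeping that all identifications are compatible with $\varphi$ --- are formal or standard.
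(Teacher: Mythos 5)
Your proposal is correct in substance but takes a genuinely different route from the paper. The paper works directly with the Hochschild--Serre spectral sequence $E_2^{a,b}=H^a_\Gal(K,H^b_\et(X_{\ol K}))\Rightarrow H^{a+b}_\et(X_K)$: smooth base change identifies the coefficients with $H^b_\et(X)$ with trivial action, the edge maps $H^j_\et(X_K)\to E_2^{0,j}$ are split surjective via functoriality from $H^j_\et(X)$, and since $E_2^{*,*}$ is generated as an $H^*_\Gal(K)$-module by the column $E_2^{0,*}$ and the differentials are $H^*_\Gal(K)$-linear derivations, all $d_r$ vanish and the sequence degenerates at $E_2$. You instead pass to the derived category, invoke base change to write $R\Gamma_\et(X_K)\simeq R\Gamma_\et(X)\otimes^{\mathbf L}_{\Z/m\Z}R\Gamma_\Gal(K)$, and kill the $\Tor$-terms in the resulting K\"unneth spectral sequence by the flatness (hence, over the Artinian ring $\Z/m\Z$, projectivity) of $H^*_\Gal(K)$. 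This is a valid alternative; interestingly, the flatness you make explicit is actually also needed in the paper's argument, silently, at the step ``$H^a_\Gal(K,H^b_\et(X))\cong H^a_\Gal(K)\otimes H^b_\et(X)$'' (the coefficients $H^b_\et(X)$ need not be free, so one must kill the $\Tor$-terms in the universal-coefficient sequence) and again in splitting the Hochschild--Serre filtration, whereas your proof puts it in the foreground. The flatness argument you give is essentially Lemma~\ref{lem:freeness}, and the splitting of $\varphi$ into the K\"unneth pieces is the same as the paper's in both cases.

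Two small points you should tighten. First, the passage from ``the $R^qf_*\Z/m\Z$ are constant'' to the derived K\"unneth quasi-isomorphism is not automatic (constancy of cohomology sheaves does not by itself make $Rf_*\Z/m\Z$ pulled back); the correct justification is base change at the level of derived categories applied to the Cartesian square $X_K\to X$, $\Spec K\to\Spec k$, giving $Rp_{1*}\Z/m\Z\simeq g^*R\Gamma_\et(X,\Z/m\Z)$, followed by the observation that $R\Gamma_\Gal(K,g^*C)\simeq R\Gamma_\Gal(K,\Z/m\Z)\otimes^{\mathbf L}_{\Z/m\Z}C$ for a bounded complex $C$ of finite $\Z/m\Z$-modules with trivial $\Gal_K$-action (compute with continuous cochains, which form a complex of free $\Z/m\Z$-modules). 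Second, $R\Gamma_\Gal(K,\Z/m\Z)$ need not be bounded, so deducing formality by a Postnikov/truncation argument requires a word about the colimit; you can sidestep this entirely by skipping formality and using directly that the cohomology of $R\Gamma_\Gal(K)$ is flat, which already collapses the K\"unneth spectral sequence. Finally, your pseudo-abelian reduction lands on $\SmProj$, which suffices for the proposition as stated, but the paper reduces to $\Sm$ and proves the slightly stronger Remark~\ref{rem:smooth-bc} (needed in Lemma~\ref{lem:inj2}); your argument would also cover $\Sm$ if one replaces proper base change by smooth base change, so the loss is only cosmetic.
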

\begin{proof}
Vishik proved \eqref{eq:spec-decomp} in \cite[Proposition 4.2]{Vishik}
assuming $k=\C$ and $m$ is a prime,
although his proof did not use those assumptions.
For the completeness sake we include a short proof.
We may replace $N$ by $X \in \Sm$.
Consider the spectral sequence
\begin{equation}\label{eq:spec-seq}
E_2^{a, b}=
H^a_{\Gal}(K, H^b_{\et}(X_{\ol{K}}))
\Rightarrow H^{a+b}_{\et}(X_K),
\end{equation}
where $\ol{K}$ is a separable closure of $K$.
By the smooth base change theorem
we have
$H^b_{\et}(X_{\ol{K}}) \cong H^b_{\et}(X)$
on which the absolute Galois group of $K$
acts trivially, and hence
\[
E_2^{a, b}=
H^a_{\Gal}(K, H^b_{\et}(X_{\ol{K}}))
\cong 
H^a_{\Gal}(K) \otimes H^b_{\et}(X).
\]
Observe that
$E_2^{*, *}$ is generated by $H^*_\et(X)$ as a $H^*_\Gal(K)$-module,
and the differential maps
$d_r^{*, *} : E_r^{*, *} \to E_r^{*+r, *-r+1}$
are $H^*_\Gal(K)$-linear.
It follows from the commutative diagram
\[
\xymatrix{
H^j_\et(X_K) \ar[r] 
& E_2^{0, j}=H^0_\Gal(K, H^j_\et(X_{\ol{K}})) \ar@{^{(}-^{>}}[d]
\\
H^j_\et(X) \ar[r]^{\cong} \ar[u]
& H^j_\et(X_{\ol{K}})
}
\]
that the edge maps 
$H^j_\et(X_K) \to E_2^{0, j}$
are surjective for all $j$,
whence $E_2^{0, j}=E_\infty^{0, j}$.
We conclude that \eqref{eq:spec-seq} degenerates at $E_2$-terms
and induces the desired isomorphism.
\end{proof}

\begin{remark}\label{rem:smooth-bc}
The proof shows that
\eqref{eq:spec-decomp} remains valid
when $N$ is replaced by any $X \in \Sm$.
\end{remark}

\begin{corollary}\label{cor:coniveau0}
For any $K \in \Fld$ and $a \in \Z$, 
we have an isomorphism
\begin{align}
\label{eq:coniveau0}
H^{a, a}_{\sM}(M_K) \cong 
&
(H^{a-1}_{\Gal}(K) \otimes N_S) \oplus
(H^{a-2}_{\Gal}(K) \otimes QN_S) 
\\
\notag
&\oplus
(H^{a-2}_{\Gal}(K) \otimes \wt{B}_S) \oplus
(H^{a-3}_{\Gal}(K) \otimes B_S).
\end{align}
\end{corollary}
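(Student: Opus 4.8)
The plan is to chain together the Beilinson--Lichtenbaum comparison with the K\"unneth-type decomposition of Proposition~\ref{prop:etcoh-dec}, and then feed in the explicit \'etale cohomology of $M$ computed in \S\ref{sect:surf}. First, since the pair $(a,a)$ satisfies the inequality $a\le b$ with $b=a$, the isomorphism \eqref{eq:motcoh3} — valid for motives by the remark following its statement — gives
\[
H^{a,a}_{\sM}(M_K)\cong H^a_{\et}(M_K).
\]
Next I would apply Proposition~\ref{prop:etcoh-dec} with $N=M$, which in degree $a$ yields
\[
H^a_{\et}(M_K)\cong \bigoplus_{i+j=a} H^i_{\Gal}(K)\otimes H^j_{\et}(M).
\]

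Then I would plug in the computation of $H^j_{\et}(M)$. By Lemma~\ref{lem:etcoh-fincoef} we have $H^j_{\et}(M,\mu_m)=0$ unless $j\in\{1,2,3\}$, so the sum above reduces to three summands, with Galois factors in degrees $a-1$, $a-2$, $a-3$. Because $m_0$ is divisible by the integer in \eqref{eq:torsion-order-S} and $m=m_0^2$, we have $mN_S=0$ and $mB_S=0$, so \eqref{eq:etcoh-M-coeff-m} reads $H^1_{\et}(M)\cong N_S[m]=N_S$ and $H^3_{\et}(M)\cong B_S/mB_S=B_S$; for the middle term, \eqref{eq:etcoh2-fincoef} of Lemma~\ref{lem:split} gives $H^2_{\et}(M)\cong QN_S\oplus\wt{B}_S$. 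Substituting these and distributing the tensor product over the direct sum in the $j=2$ summand produces exactly the four-term decomposition in the statement.

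The argument is essentially bookkeeping of results already established, so I do not expect a genuine obstacle. The only points requiring care are to respect the Tate-twist conventions fixed in Setting~\ref{setting:tate-twists} (so that all the cohomology groups are indeed taken with $\Z/m\Z$-coefficients and the comparison \eqref{eq:motcoh3} applies), and to observe that every isomorphism invoked is the canonical one, so that the resulting identification is natural in $K$ — this naturality will be what makes the corollary usable in the proof of Theorem~\ref{thm:vishik}.
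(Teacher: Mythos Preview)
Your proposal is correct and follows essentially the same approach as the paper: the paper's proof simply says to apply Proposition~\ref{prop:etcoh-dec} to $N=M$ and use \eqref{eq:etcoh2-fincoef}, \eqref{eq:etcoh13-fincoef} and \eqref{eq:motcoh3}, which is exactly the chain of identifications you spell out. The only cosmetic difference is that the paper cites \eqref{eq:etcoh13-fincoef} directly for $H^1_\et(M)\cong N_S$ and $H^3_\et(M)\cong B_S$, whereas you derive these from \eqref{eq:etcoh-M-coeff-m} together with $mN_S=mB_S=0$; the content is the same.
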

\begin{proof}
Apply Proposition \ref{prop:etcoh-dec}
to $N=M$ and
use \eqref{eq:etcoh2-fincoef},
\eqref{eq:etcoh13-fincoef} and
\eqref{eq:motcoh3}.
\end{proof}

\subsection{The first coniveau filtration}
The isomorphism
$\Z/m\Z \cong \mu_m$
fixed in Setting \ref{setting:tate-twists}
yields a homomorphism
\[
\tau : H^{a, b}_{\sM}(M_K)
\to H^{a, b+1}_{\sM}(M_K).
\]

\begin{proposition}\label{prop:coniveau1}
For any $K \in \Fld$ and $a \in \Z$, the map
\[
\tau : H^{a, a-1}_{\sM}(M_K)
\to H^{a, a}_{\sM}(M_K)
\cong H^a_{\et}(M_K)
\]
is injective and its image corresponds to the subgroup
\begin{align}
\label{eq:coniveau1}
& 
(H^{a-2}_{\Gal}(K) \otimes QN_S) \oplus
(H^{a-3}_{\Gal}(K) \otimes B_S)
\\
\notag
&\oplus
\ker[\alpha_a :
(H^{a-1}_{\Gal}(K) \otimes N_S) \oplus
(H^{a-2}_{\Gal}(K) \otimes \wt{B}_S)
\to H^a_{\ur}(M_K)]
\end{align}
under the isomorphism \eqref{eq:coniveau0}
(see \eqref{eq:def-unramcoh4} for $H^a_{\ur}(M_K)$).
Here $\alpha_a$ is given by the composition
\[
(H^{a-1}_{\Gal}(K) \otimes N_S) \oplus
(H^{a-2}_{\Gal}(K) \otimes \wt{B}_S)
\overset{\text{\eqref{eq:coniveau0}}}{\hookrightarrow}
H_\sM^{a, a}(M_K) 
\overset{\rho}{\longrightarrow}
H_{\ur}^a(M_K),
\]
where $\rho$ is given by Theorem \ref{thm:TY} (1) below.
\end{proposition}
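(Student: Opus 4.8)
The plan is to analyze the coniveau (Gersten) spectral sequence for motivic cohomology of $M_K$, comparing the weight $a-1$ and weight $a$ rows. The key structural fact is that $\tau : H^{a,a-1}_{\sM}(M_K) \to H^{a,a}_{\sM}(M_K) \cong H^a_{\et}(M_K)$ fits into the first coniveau filtration: one has an exact sequence relating $H^{a,a-1}_{\sM}(M_K)$, $H^{a,a}_{\sM}(M_K)$, and the ``residue'' contributions from codimension-one points, and since $H^{a,a}_{\sM}(M_K)$ is the whole of $H^a_{\et}(M_K)$ by Beilinson--Lichtenbaum \eqref{eq:motcoh3} while $H^{a,a-1}_{\sM}(M_K)$ captures exactly those classes whose weight can be lowered by one, the image of $\tau$ is precisely the subgroup of unramified-in-the-first-sense classes. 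Concretely, I expect that $\tau$ is injective because $H^{a-1,a-1}_{\sM}(M_K) \to H^{a,a-1}_{\sM}(M_K)$-type boundary terms vanish for weight reasons (using \eqref{eq:motcoh1}), and its cokernel is identified via a localization/Gersten argument with a direct sum of residues at points of $M_K$ of codimension one, i.e. with $H^{a-1}_{\ur}$-type data.

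The next step is to make the target of $\tau$ explicit using the decomposition \eqref{eq:coniveau0} from Corollary \ref{cor:coniveau0}. The four summands there come, via Proposition \ref{prop:etcoh-dec}, from the four pieces $N_S = H^1_\et(M)$, $QN_S \subset H^2_\et(M)$, $\wt B_S \subset H^2_\et(M)$, and $B_S = H^3_\et(M)$ of the étale cohomology of $M$, tensored with Galois cohomology of $K$. I would argue that the two summands $H^{a-2}_{\Gal}(K) \otimes QN_S$ and $H^{a-3}_{\Gal}(K) \otimes B_S$ lie entirely in the image of $\tau$: these correspond to classes of the form $\tau(\text{something})$ because $QN_S$ and $B_S$ are themselves in the image of the Bockstein / lie in cohomological degrees $\geq 2$, so after the identification $H^a_{\et}(M_K) \cong H^{a,a}_{\sM}(M_K)$ they are hit from $H^{a,a-1}_{\sM}(M_K)$. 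For the remaining two summands $H^{a-1}_{\Gal}(K)\otimes N_S$ and $H^{a-2}_{\Gal}(K)\otimes \wt B_S$ — which correspond to the ``degree-one part'' $N_S = H^1_\et(M)$ and the lift $\wt B_S$ of the Brauer group — a class lifts to weight $a-1$ exactly when it is unramified, i.e. when it lies in $\ker \alpha_a$, where $\alpha_a$ is the composite to $H^a_{\ur}(M_K)$ through the map $\rho$ of Theorem \ref{thm:TY}(1). This is where the splitting $H^2_{\et}(M,\mu_m) \cong QN_S \oplus \wt B_S$ from Lemma \ref{lem:split} is essential: it cleanly separates the part that is automatically unramified (coming from $QN_S$, since $N_S$ has degree $1$) from the part $\wt B_S$ whose unramifiedness is a genuine condition.

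The main obstacle will be the precise identification of the cokernel of $\tau$ with $H^a_{\ur}(M_K)$-valued residue maps, and in particular showing that the map induced by $\tau$ on the $N_S$- and $\wt B_S$-summands is exactly $\alpha_a$ and not merely some map with the same kernel up to the other two summands. For this I would use the functoriality of the coniveau spectral sequence with respect to the decomposition of $M_K$ coming from Proposition \ref{prop:etcoh-dec}, combined with the compatibility of $\tau$ with cup products by classes in $H^*_{\Gal}(K)$ (which follows from $\tau$ being induced by the identification $\Z/m\Z \cong \mu_m$ and hence respecting the module structure established in the proof of Proposition \ref{prop:etcoh-dec}). The comparison of the boundary map of the motivic coniveau sequence with the residue maps defining unramified cohomology is standard (Bloch--Ogus / Gersten), but threading it through the four-term decomposition while keeping track of which pieces are automatically unramified requires care; I expect the bookkeeping here, rather than any single hard idea, to be the crux. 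Once this is done, assembling the four pieces into the displayed subgroup \eqref{eq:coniveau1} is immediate.
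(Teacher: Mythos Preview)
Your overall strategy is right and matches the paper's: identify $\operatorname{im}(\tau)$ with $\ker(\rho)$ via the exact sequence of Theorem~\ref{thm:TY}(1), then use the K\"unneth decomposition \eqref{eq:coniveau0} to determine which summands $\rho$ kills. But several details are off.

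First, your reason for injectivity is wrong. The term preceding $\tau : H^{a,a-1}_\sM(M_K) \to H^{a,a}_\sM(M_K)$ in the long exact sequence of Theorem~\ref{thm:TY}(1) (with $b=a$) is $H^{-1}_\Zar(S_K,\sH^a_m)=0$, which has nothing to do with \eqref{eq:motcoh1}; the group $H^{a-1,a-1}_\sM(M_K)$ you mention does not vanish in general and is not the relevant term.

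Second, and more importantly, you are vague precisely where the argument has content. You say the $QN_S$ and $B_S$ summands lie in $\operatorname{im}(\tau)$ ``because $QN_S$ and $B_S$ are themselves in the image of the Bockstein,'' but you never explain \emph{why} being a Bockstein image forces a class into $\operatorname{im}(\tau)$. The mechanism is Lemma~\ref{lem:comm-bock}: one has $Q\tau = \tau Q$, and the lower $\tau : H^{a-1,a-1}_\sM(M_K) \to H^{a-1,a}_\sM(M_K)$ is an isomorphism by \eqref{eq:motcoh3}. Hence
\[
\rho\bigl(H^{a-2}_\Gal(K)\otimes QN_S\bigr)
= \rho\,Q\,\tau\bigl(H^{a-2}_\Gal(K)\otimes N_S\bigr)
= \rho\,\tau\,Q\bigl(H^{a-2}_\Gal(K)\otimes N_S\bigr)
= 0,
\]
and similarly for $B_S = Q\wt B_S$. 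Without this commutation you cannot pass from ``Bockstein image'' to ``$\tau$-image.''

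Third, you have misidentified the crux. There is nothing to prove about $\alpha_a$: it is \emph{defined} in the statement as the restriction of $\rho$ to the two remaining summands, so once $\rho$ is shown to kill the $QN_S$ and $B_S$ pieces, the description \eqref{eq:coniveau1} is immediate from $\operatorname{im}(\tau)=\ker(\rho)$. The bookkeeping you worry about is trivial; the Bockstein--$\tau$ commutation you glossed over is the actual point.
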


\begin{remark}
We will show that
$\alpha_a$ is surjective 
in Proposition \ref{prop:coniveau2} below.
\end{remark}

For the proof,
we recall an important result from \cite{TY}:

\begin{theorem}\label{thm:TY}
Let  $X \in \Sm$, $K \in \Fld$
and $a, b \in \Z$ with $b \ge 0$.
\begin{enumerate}
\item 
There exists a long exact sequence
\[
\cdots 
\to H^{a, b-1}_{\sM}(X_K)
\overset{\tau}{\to} H^{a, b}_{\sM}(X_K)
\overset{\rho}{\to} H^{a-b}_{\Zar}(X_K, \sH^b_m)
\to H^{a+1, b-1}_{\sM}(X_K)
\overset{\tau}{\to} \cdots,
\]
where 
$\sH^b_{m}$ is from \eqref{eq:def-unramcoh4}.
\item 
Let 
$E_1^{i, j}=H^{2i+j}_\Zar(X_K, \sH^{-i}_m) \Rightarrow H^{i+j}_\et(X_K)$
be the $\tau$-Bockstein spectral sequence 
constructed in \cite[p. 4478]{TY}
(using the long exact sequence in (1)).
Let
${}^\dagger E_1^{i, j}=\oplus_{x \in (X_K)^{(i)}}
H^{j-i}_\Gal(K(x))
\Rightarrow H^{i+j}_\et(X_K)$
be the coniveau spectral sequence.
Then we have an isomorphism of spectral sequences
$E_r^{i, j} \cong {}^\dagger E_{r+1}^{2i+j, -i}$.
\item 
The composition
\[ \CH^a(X_K)/m \CH^a(X_K)
\cong 
H^{2a, a}_\sM(X_K)
\overset{\tau^a}{\longrightarrow}
H^{2a, 2a}_\sM(X_K) \cong
H^{2a}_\et(X_K)
\]
agrees with the cycle map.
\end{enumerate}
\end{theorem}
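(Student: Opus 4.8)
The plan is to prove the three assertions by reducing each to an established structural statement about the interaction between motivic cohomology mod $m$, its $\tau$-multiplication tower, and the Beilinson--Lichtenbaum comparison \eqref{eq:motcoh3}. For part (1), I would use the triangle in the derived category of Nisnevich (equivalently Zariski) sheaves on $X_K$ relating the motivic complex $\Z/m\Z(b)$ to the truncated \'etale complex $\tau_{\le b}R\epsilon_*\mu_m^{\otimes b}$, where $\epsilon$ is the change-of-topology map from the \'etale to the Zariski site. By the norm residue theorem (the former Beilinson--Lichtenbaum conjecture, \cite[Theorem B]{HW}) the motivic complex $\Z/m\Z(b)$ is precisely $\tau_{\le b}R\epsilon_*\mu_m^{\otimes b}$, so multiplication by the fixed class $\tau \in H^0_{\et}(X_K,\mu_m) = H^{0,1}_{\sM}(X_K)$ corresponds, after passing to $\Z/m\Z(b) \to \Z/m\Z(b+1)$, to the truncation map $\tau_{\le b}R\epsilon_*\mu_m^{\otimes b} \to \tau_{\le b+1}R\epsilon_*\mu_m^{\otimes (b+1)}$ (using the chosen identification $\Z/m\Z \cong \mu_m$ from Setting \ref{setting:tate-twists} to ignore Tate twists). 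The cone of that truncation map is a single shifted sheaf, namely $\sH^b_m[-b]$ where $\sH^b_m$ is the Zariski sheaf from \eqref{eq:def-unramcoh4}; taking hypercohomology of the resulting distinguished triangle yields the long exact sequence in (1). This is exactly the construction carried out in \cite[\S\,]{TY}, so I would simply invoke it.

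For part (2), the idea is that the $\tau$-Bockstein spectral sequence of (1) and the coniveau spectral sequence are two filtrations of $H^*_{\et}(X_K)$ that are identified up to reindexing because both are governed by the coniveau (Gersten) resolution of the sheaves $\sH^b_m$. Concretely, $H^{a-b}_\Zar(X_K,\sH^b_m)$ is computed by the Gersten complex $\bigoplus_{x \in (X_K)^{(j)}} H^{b-j}_{\Gal}(k(x),\mu_m^{\otimes(b-j)})$, whose terms are exactly the $E_1$-terms of the coniveau spectral sequence; matching the differentials and the reindexing $E_r^{i,j} \cong {}^\dagger E_{r+1}^{2i+j,-i}$ is a bookkeeping exercise identical to the one in \cite[p.\,4478]{TY}. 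I expect this comparison to be the main obstacle, not because any single step is deep but because keeping the two sets of indices, and the extra page-shift coming from the fact that the $\tau$-tower adds one layer compared to the coniveau tower, consistent is delicate; the cleanest route is to cite \cite{TY} verbatim rather than redo it.

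Finally, part (3) follows by specializing (1) to the top weight: when $b=a$ the group $H^{a-b}_\Zar(X_K,\sH^b_m) = H^0_\Zar(X_K,\sH^a_m)$ receives the cycle-class information, and the composite $\CH^a(X_K)/m \cong H^{2a,a}_{\sM}(X_K) \xrightarrow{\tau^a} H^{2a,2a}_{\sM}(X_K) \cong H^{2a}_{\et}(X_K)$ is, by \eqref{eq:motcoh2} and \eqref{eq:motcoh3}, the map induced by iterating the truncation maps $\tau_{\le a+j}R\epsilon_* \to \tau_{\le a+j+1}R\epsilon_*$ on $H^{2a}$. Since $2a$ lies above all the truncation degrees $a, a+1, \dots, 2a-1$ in play, each $\tau$-step is eventually the identity on this cohomological degree, and the total composite is the natural map from the motivic to the \'etale cycle class, which is the classical \'etale cycle map by the compatibility in \cite[Corollary 19.2, Theorem 19.3]{MVW}. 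Again this is precisely \cite[]{TY}, so I would cite it and only spell out the identifications via \eqref{eq:motcoh2} and \eqref{eq:motcoh3}.
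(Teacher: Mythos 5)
Your proposal takes essentially the same route as the paper: all three parts are ultimately cited from \cite{TY}, with brief indications of where they come from, and this matches the level of detail in the paper's own proof. Your sketch of (1) (the Beilinson--Lichtenbaum identification $\Z/m\Z(b)\cong\tau_{\le b}R\epsilon_*\mu_m^{\otimes b}$, so that multiplication by $\tau$ becomes a truncation inclusion with cone $\sH^b_m[-b]$) is precisely what the paper means by ``(1) is a consequence of \eqref{eq:motcoh3}.'' For (2) you and the paper both defer to the references (the paper credits Deligne and Paranjape via \cite{BO} and \cite{P}); your observation that the page shift is a genuine source of bookkeeping pain is accurate.

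The one point where you diverge from the paper is part (3). The paper simply notes ``(3) is a consequence of (2),'' i.e.\ one reads off the edge map of the $\tau$-Bockstein spectral sequence and compares it with the edge map of the coniveau spectral sequence, the latter being the classical cycle class map. You instead argue directly that, under the truncation-tower picture, the composite $\tau^a$ in cohomological degree $2a$ is the natural map from motivic to \'etale cohomology, hence the cycle class. That route can be made to work, but your citation of \cite[Corollary 19.2, Theorem 19.3]{MVW} only justifies the identification $H^{2a,a}_\sM\cong \CH^a/m$; it does not supply the needed compatibility between the $\tau$-composite, the Beilinson--Lichtenbaum isomorphism in top degree, and the \'etale cycle class. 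That compatibility is exactly what the spectral-sequence comparison in (2) packages, so the paper's one-line derivation of (3) from (2) is the cleaner reference. I'd suggest either invoking (2) as the paper does, or, if you want a direct argument, citing a source that explicitly identifies the composite $\CH^a/m\to H^{2a}_\et$ coming from the $\tau$-tower with the \'etale cycle map rather than MVW alone.
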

\begin{proof}
This is taken from \cite[Lemma 2.1, Theorem 2.4]{TY}.
Here we only recall that
(1) is a consequence of 
\eqref{eq:motcoh3},
(2) is due to Deligne and Paranjape 
(see \cite[p.195, footnote]{BO}, \cite[Corollary 4.4]{P}),
and (3) is a consequence of (2).
\end{proof}

We need a simple lemma.

\begin{lemma}\label{lem:comm-bock}
\begin{enumerate}
\item 
The following diagram is commutative:
\[
\xymatrix{
H^{a, b}_{\sM}(M_K) \ar[r]^\tau &
H^{a, b+1}_{\sM}(M_K) 
\\
H^{a-1, b}_{\sM}(M_K) \ar[r]_\tau \ar[u]^Q&
H^{a-1, b+1}_{\sM}(M_K). \ar[u]_Q
}
\]
\item
We have
$Q(H^a_\Gal(K) \otimes H^b_\et(M)) = H^a_\Gal(K) \otimes Q(H_\et^b(M))$.
\end{enumerate}
\end{lemma}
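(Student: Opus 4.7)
The plan for part (1) is to lift $\tau$ to a morphism of short exact sequences of motivic complexes and then invoke naturality of the connecting homomorphism. Explicitly, $\tau\colon H^{a,b}_{\sM}(M_K)\to H^{a,b+1}_{\sM}(M_K)$ is induced by the morphism $\Z/m(b)\to\Z/m(b+1)$ of motivic complexes given by cup product with the class $\tau_0\in H^{0,1}_{\sM}(\Spec k)\cong\mu_m(k)$ corresponding to $1\in\Z/m$ under the identification of Setting \ref{setting:tate-twists}. Because $k$ is algebraically closed, I can fix a compatible identification $\Z/m^2\cong\mu_{m^2}$ via a primitive root $\zeta_{m^2}$ with $\zeta_{m^2}^m=\zeta_m$. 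Then the Bockstein short exact sequences $0\to\Z/m(b)\to\Z/m^2(b)\to\Z/m(b)\to 0$ for varying $b$ fit, via $\tau$ on the outer columns and cup product with a lift of $\tau_0$ to $\mu_{m^2}(k)$ on the middle column, into a commutative ladder. Functoriality of the long exact sequence then yields $Q\tau=\tau Q$.

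For part (2), my plan combines the graded derivation property of $Q$ with the vanishing of $Q$ on $H^*_{\Gal}(K)$. Under the Künneth-type decomposition $H^*_{\et}(M_K)\cong H^*_{\Gal}(K)\otimes H^*_{\et}(M)$ supplied by Proposition \ref{prop:etcoh-dec}, the derivation rule gives
\[
Q(\pr_1^* x\cup\pr_2^* y)=\pr_1^*(Q x)\cup\pr_2^* y+(-1)^a\,\pr_1^* x\cup\pr_2^*(Q y)
\]
for $x\in H^a_{\Gal}(K)$ and $y\in H^b_{\et}(M)$. If $Q$ vanishes on $H^*_{\Gal}(K)$ the first term drops out, and both inclusions in the asserted equality $Q(H^a_{\Gal}(K)\otimes H^b_{\et}(M))=H^a_{\Gal}(K)\otimes Q(H^b_{\et}(M))$ are then immediate.

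The hard step will be establishing $Q\equiv 0$ on $H^*_{\Gal}(K,\mu_m)$. For $H^0=\mu_m(K)=\mu_m(k)$ this is direct: every $\zeta\in\mu_m(k)$ admits a preimage in $\mu_{m^2}(k)\subseteq\mu_{m^2}(K)$ under $[m]\colon\mu_{m^2}\to\mu_m$ because $k$ is algebraically closed. For $H^1\cong K^\times/(K^\times)^m$ (Kummer), the map $H^1(K,\mu_{m^2})\to H^1(K,\mu_m)$ induced by $[m]$ identifies with the natural surjection $K^\times/(K^\times)^{m^2}\twoheadrightarrow K^\times/(K^\times)^m$, so $Q=0$ on $H^1$ as well. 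To extend to all higher degrees, I plan to invoke the Rost--Voevodsky norm residue isomorphism (already cited in the paper), which identifies $H^n_{\Gal}(K,\mu_m^{\otimes n})$ with $K^M_n(K)/m$; after the canonical identifications $\mu_m^{\otimes n}\cong\mu_m$, this graded ring is generated by its degree-one component, so the derivation property of $Q$ propagates $Q=0$ from $H^1$ to every $H^n_{\Gal}(K)$.
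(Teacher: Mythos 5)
Your proposal is correct and follows essentially the same route as the paper: both parts rest on the derivation property of the Bockstein together with the vanishing of $Q$ on the relevant coefficient classes --- on $\mu_m(k)$ for (1) (liftability to $\mu_{m^2}(k)$), and on all of $H^*_{\Gal}(K)$ for (2) via the norm residue isomorphism theorem. Your ladder of short exact sequences for (1) and your ``generated in degree one'' argument for (2) are only presentational variants of the paper's one-line reductions (the latter being equivalent to the paper's surjectivity of $H^a_{\Gal}(K,\mu_{m^2}^{\otimes a})\to H^a_{\Gal}(K,\mu_m^{\otimes a})$).
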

\begin{proof}
We have $Q(\zeta)=0$ for any $\zeta \in \mu_m$
because 
the $m$-th power map
$H^0_\Gal(k, \mu_{m^2}) \to H^0_\Gal(k, \mu_m)$
is surjective as $k$ is algebraically closed.
Thus (1)  follows from a formal property of
the Bockstein operator 
$Q(x \cup y)=Q(x) \cup y \pm x \cup Q(y)$
by taking $y=\zeta$
(since $\tau = - \cup \zeta$ by definition).
The same formal property reduces (2) to the surjectivity of
$H_\Gal^a(K, \mu_{m^2}^{\otimes a}) \to H_\Gal^a(K, \mu_{m}^{\otimes a})$,
which is a consequence of 
the norm residue isomorphism theorem (see \cite[Theorem 6.16]{V-lcoef}).
\end{proof}

\begin{proof}[Proof of Proposition \ref{prop:coniveau1}]
The injectivity of $\tau$ is 
a part of the Beilinson-Lichtenbaum conjecture 
(proved by Voevodsky in \cite[Theorem 6.17]{V-lcoef}).
Since
$H^{-1}_\Zar(S_K, \sH^a_m)=0$ and
$H^0_\Zar(S_K, \sH^a_m)=H^a_{\ur, m}(S_K)$
by the definition \eqref{eq:def-unramcoh4},
we obtain from Theorem \ref{thm:TY} (1) with $a=b$
an exact sequence 
sitting in the  upper row of a diagram:
\begin{equation}\label{eq:diag-coniv1}
\xymatrix{
0 \ar[r] &
H^{a, a-1}_{\sM}(M_K) \ar[r]^\tau &
H^{a, a}_{\sM}(M_K) \ar[r]^\rho &
H^a_{\ur, m}(M_K)
\\
&
H^{a-1, a-1}_{\sM}(M_K) \ar[r]_\tau^\cong \ar[u]^Q &
H^{a-1, a}_{\sM}(M_K). \ar[u]_Q &
}
\end{equation}
(This reproves the desired injectivity.)
The square in \eqref{eq:diag-coniv1} is 
commutative by Lemma \ref{lem:comm-bock} (1).
The lower horizontal arrow in the diagram
is an isomorphism by \eqref{eq:motcoh3}.
By \eqref{eq:coniveau0}
we find that
$H^{a-1, a-1}_{\sM}(M_K)$ and  $H^{a, a}_{\sM}(M_K)$
are respectively decomposed as
\begin{align*}
&(H^{a-2}_{\Gal}(K) \otimes N_S) \oplus
(H^{a-3}_{\Gal}(K) \otimes QN_S) 
\oplus
(H^{a-3}_{\Gal}(K) \otimes \wt{B}_S) \oplus
(H^{a-4}_{\Gal}(K) \otimes B_S),
\\
&(H^{a-1}_{\Gal}(K) \otimes N_S) \oplus
(H^{a-2}_{\Gal}(K) \otimes QN_S) 
\oplus
(H^{a-2}_{\Gal}(K) \otimes \wt{B}_S) \oplus
(H^{a-3}_{\Gal}(K) \otimes B_S).
\end{align*}
By Lemma \ref{lem:comm-bock} (2) and \eqref{eq:diag-coniv1},
we get
\[
\rho(H^{a-2}_{\Gal}(K) \otimes QN_S)
=
\rho(Q\tau(H^{a-2}_{\Gal}(K) \otimes N_S))
=
\rho(\tau Q(H^{a-2}_{\Gal}(K) \otimes N_S))=0.
\]
Similarly we obtain
$\rho(H^{a-3}_{\Gal}(K) \otimes B_S)=0$
since $B_S=Q\wt{B}_S$.
To conclude \eqref{eq:coniveau1},
it suffices now to note that
$H^a_{\ur, m}(M_K)=H^a_{\ur}(M_K)$
by \eqref{eq:def-unramcoh3}
and use Corollary \ref{cor:coh-S-M} (1).
%
%
%
\end{proof}

\subsection{The second coniveau filtration}

\begin{proposition}\label{prop:coniveau2}
For any $K \in \Fld$ and $a \in \Z$, the map
\[
\tau : H^{a, a-2}_\sM(M_K)
\to H^{a, a-1}_\sM(M_K)
\]
is injective and its image corresponds to the subgroup
\begin{align}
\label{eq:coniveau2}
\ker[\beta_a :
(H^{a-2}_{\Gal}(K) \otimes QN_S) \oplus
(H^{a-3}_{\Gal}(K) \otimes B_S)
\to H^{a-1}_{\ur}(M_K)]
\end{align}
under the isomorphism \eqref{eq:coniveau1}.
Here $\beta_a$ is defined by the commutativity of
\[
\xymatrix{
(H^{a-2}_\Gal(K) \otimes QN_S) \oplus (H^{a-3}_\Gal(K) \otimes B_S)
\ar[rd]^-{\beta_a}
& \\
(H^{a-2}_\Gal(K) \otimes N_S) \oplus (H^{a-3}_\Gal(K) \otimes \wt{B}_S)
\ar[r]_-{\alpha_{a-1}} \ar[u]^Q_\cong
& 
H_\ur^{a-1}(M_K).
}
\]
Moreover, the map $\alpha_a$ in 
\eqref{eq:coniveau1} is surjective.
\end{proposition}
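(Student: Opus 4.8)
The strategy is to run Vishik's method one step further, exactly parallel to the proof of Proposition \ref{prop:coniveau1}, but now applied to the pair of indices $(a,a-1)$ instead of $(a-1,a-1)=(a,a)$ shifted. Concretely, I would apply Theorem \ref{thm:TY} (1) with $b=a-1$ to $X_K$ with $X$ replaced by $M$ (using the extension to motives noted after \eqref{eq:motcoh3}). The relevant portion of that long exact sequence reads
\[
H^{a-1,a-2}_{\sM}(M_K)
\overset{\tau}{\to} H^{a-1,a-1}_{\sM}(M_K)
\overset{\rho}{\to} H^{0}_{\Zar}(M_K,\sH^{a-1}_m)
\to H^{a,a-2}_{\sM}(M_K)
\overset{\tau}{\to} H^{a,a-1}_{\sM}(M_K)
\overset{\rho}{\to} H^{1}_{\Zar}(M_K,\sH^{a-1}_m),
\]
where $H^{0}_{\Zar}(M_K,\sH^{a-1}_m)=H^{a-1}_{\ur}(M_K)$ by \eqref{eq:def-unramcoh4}. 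To get the injectivity of $\tau : H^{a,a-2}_\sM(M_K)\to H^{a,a-1}_\sM(M_K)$ I need the preceding $\rho$ to be surjective, i.e. $\rho : H^{a-1,a-1}_{\sM}(M_K)\to H^{a-1}_{\ur}(M_K)$ is onto; this is exactly the surjectivity of $\alpha_{a-1}$, which by a degree shift is the same assertion as "the $\alpha$ in \eqref{eq:coniveau1} is surjective". So the two claims of the proposition — injectivity of this $\tau$ and surjectivity of $\alpha_a$ — are linked, and I would prove the surjectivity of $\alpha_a$ first and then feed it back in.

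For the surjectivity of $\alpha_a$: by Theorem \ref{thm:TY} (2), the $\tau$-Bockstein spectral sequence is identified (up to reindexing $E_r^{i,j}\cong {}^\dagger E_{r+1}^{2i+j,-i}$) with the coniveau spectral sequence for $M_K$, whose $E_1$-page is $\bigoplus_{x\in(M_K)^{(i)}}H^{j-i}_\Gal(k(x))$. The edge of the coniveau spectral sequence identifies ${}^\dagger E_2^{a,0}$ with $H^0_\Zar(M_K,\sH^a_m)=H^a_{\ur}(M_K)$, and the map $\rho$ of Theorem \ref{thm:TY} (1) realizes the corresponding Bockstein differential. Combining this with the explicit decomposition \eqref{eq:coniveau0} of $H^{a,a}_\sM(M_K)\cong H^a_\et(M_K)$ and the already-identified vanishing of $\rho$ on the $QN_S$- and $B_S$-summands (computed in the proof of Proposition \ref{prop:coniveau1}), the cokernel of $\rho$ lands in the next motivic cohomology group $H^{a+1,a-1}_\sM(M_K)$; but by \eqref{eq:motcoh1} the dimension bound $\dim M=2$ forces $H^{a+1,a-1}_\sM(M_K)$ to vanish once $a+1>(a-1)+2$, i.e. always — more precisely one uses $a+1>2(a-1)$ is false in general, so the right vanishing to invoke is $H^{a+1,b}_\sM=0$ for $b<a+1-\dim M = a-1$, which is off by one. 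The honest input is that $H^{a+1,a-2}_\sM(M_K)$ — the target of $\rho$ out of $H^{a,a-1}_\sM$ — is where the relevant obstruction sits, and it vanishes because $M$ is two-dimensional effective and torsion: explicitly $H^{a+1,a-2}_\sM(M_K)=0$ by \eqref{eq:motcoh1} since $a+1 > (a-2)+\dim M$ fails to be an inequality I can use directly, so instead I would deduce it from Corollary \ref{cor:coniveau0}/Corollary \ref{cor:coh-S-M} together with Theorem \ref{thm:TY} (2) applied to the next coniveau step. This bookkeeping is the main obstacle.

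The cleanest route, which I would actually write, is: (i) prove injectivity of $\tau : H^{a,a-2}_\sM(M_K)\to H^{a,a-1}_\sM(M_K)$ from the long exact sequence of Theorem \ref{thm:TY} (1) together with the surjectivity of the preceding $\rho$; (ii) prove that surjectivity — equivalently surjectivity of $\alpha_{a-1}$, hence of $\alpha_a$ — by a dimension count: since $M$ is a direct summand of a surface, Theorem \ref{thm:TY} (2) shows the $\tau$-Bockstein spectral sequence of $M_K$ is concentrated so that $\rho : H^{a-1,a-1}_\sM(M_K)\to H^{a-1}_\ur(M_K)$ surjects onto ${}^\dagger E_\infty^{a-1,0}$, and the remaining coniveau quotients ${}^\dagger E_\infty^{a-1-i,i}$ for $i\ge 1$ are killed because the only nonzero étale cohomology of $M$ sits in degrees $1,2,3$ (Lemma \ref{lem:etcoh-fincoef}, Corollary \ref{cor:coh-S-M} (2)), which combined with Proposition \ref{prop:etcoh-dec} pins down $H^{a-1}_\ur(M_K)$ as a subquotient already reached at the $E_2$-stage; (iii) identify the image of $\tau$ with the displayed kernel \eqref{eq:coniveau2} by applying Lemma \ref{lem:comm-bock} (1), (2) to the commutative square relating $\tau$, $Q$, and $\rho$, exactly as in the proof of Proposition \ref{prop:coniveau1} — the $QN_S\oplus B_S$-part survives because $\rho$ kills it, and the complementary $N_S\oplus\wt B_S$-part (after applying $Q^{-1}$, using $Q$ restricted to these summands is an isomorphism by Lemma \ref{lem:split}) maps via $\alpha_a$, so the kernel of $\tau$-image is precisely $\ker\beta_a$ with $\beta_a=\alpha_a\circ Q^{-1}$ as in the stated diagram. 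I expect step (ii) — the dimension/coniveau bookkeeping forcing $\alpha$ to be surjective — to be the crux, and the rest to be a faithful transcription of the argument already given for Proposition \ref{prop:coniveau1}.
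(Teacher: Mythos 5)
Your proposal misses the key new ingredient of the paper's proof and, as written, cannot close the argument.

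The crucial fact the paper uses in Proposition \ref{prop:coniveau2} that has no analogue in Proposition \ref{prop:coniveau1} is the \emph{exactness} of the complex $(H^{\bullet,b}_\sM(M_K),Q)$, which follows because $M$ is a torsion motive (so $H^{*,*}_\sM(M_K,\Lambda)$ is killed by $m$, making the Bockstein $Q$ the composite of a surjection onto and an injection from $H^{*,*}_\sM(M_K,\Lambda)$). Combined with the vanishing $H^{a+1,a-2}_\sM(M_K)=0$, this shows that $Q : H^{a-1,a-2}_\sM(M_K)\to H^{a,a-2}_\sM(M_K)$ is \emph{surjective}, which is what lets one reduce the image computation $\Im(\tau)=\Im(\tau\circ Q)=\Im(Q\circ\tau)$ down to the known decomposition of $H^{a-1,a-2}_\sM(M_K)$ from \eqref{eq:coniveau1}. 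Without this surjectivity your step (iii) only yields the inclusion $\Im(\tau)\supset Q(\ker\alpha_{a-1})=\ker\beta_a$, not the reverse; the reverse is exactly where the Bockstein exactness is indispensable, since the alternative description $\Im(\tau)=\ker(\rho : H^{a,a-1}_\sM\to H^1_\Zar(M_K,\sH^{a-1}_m))$ involves a group $H^1_\Zar(M_K,\sH^{a-1}_m)$ you have no handle on. Your proposal never invokes, or even mentions, the torsion property of $M$ at the level of $\Lambda$-coefficient motivic cohomology, so the gap is structural rather than merely expository.

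Two smaller points. First, you talk yourself out of the very vanishing you need: $H^{a+1,a-2}_\sM(M_K)=0$ \emph{does} hold by \eqref{eq:motcoh1}, because $a+1>(a-2)+\dim S=a$ always, so there is no ``off by one'' problem — this is the cheap half of the paper's argument, not an obstacle. Second, your step (ii) for the surjectivity of $\alpha_a$ is both vaguer and longer than necessary: the paper gets it in one line by applying Theorem \ref{thm:TY}(1) with $b=a$ and reading off surjectivity of $\rho$ from the injectivity of $\tau:H^{a+1,a-1}_\sM\to H^{a+1,a}_\sM$, which is Proposition \ref{prop:coniveau1} at level $a+1$; no spectral-sequence bookkeeping is needed. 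In the end you identify step (ii) as ``the crux'' and expect steps (i), (iii) to be a faithful transcription of Proposition \ref{prop:coniveau1}'s proof, but it is the other way around: (ii) is easy, and (iii) genuinely requires the new input.
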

\begin{proof}
Since $H^{a, b}_{\sM}(M_K, \Lambda)$ is
annihilated by $m$ for any $a, b \in \Z$,
a commutative diagram with an exact row
\[
\xymatrix{
H_\sM^{a-1, b}(M_K) \ar@{->>}[r] \ar[rd]_Q &
H_\sM^{a, b}(M_K, \Lambda) \ar[r]^{m=0} \ar@{^{(}-^{>}}[d] &
H_\sM^{a, b}(M_K, \Lambda) \ar@{^{(}-^{>}}[r]  &
H_\sM^{a, b}(M_K)
\\
& H^{a, b}_\sM(M_K) & &
}
\]
shows that 
the complex $(H^{\bullet, b}_{\sM}(M_K), Q)$ is exact.
Consider a diagram 
\[
\xymatrix{
H^{a, a-2}_{\sM}(M_K) \ar[r]^\tau &
H^{a, a-1}_{\sM}(M_K) 
\\
H^{a-1, a-2}_{\sM}(M_K) \ar[r]_\tau 
\ar@{->>}[u]^Q &
H^{a-1, a-1}_{\sM}(M_K), \ar[u]_Q 
}
\]
which is commutative by Lemma \ref{lem:comm-bock} (1).
Since 
$H^{a+1, a-2}_{\sM}(M_K)=0$ by \eqref{eq:motcoh1},
the previous remark shows that
the left vertical map  in the diagram is surjective.
The rest of the proof goes along the same lines as 
Proposition \ref{prop:coniveau1}.
We apply \eqref{eq:coniveau1}
to obtain direct sum decompositions of
$H^{a-1, a-2}_{\sM}(M_K)$ and $H^{a, a-1}_{\sM}(M_K)$
respectively as
\begin{align*}
&
(H^{a-3}_{\Gal}(K) \otimes QN_S) \oplus
(H^{a-4}_{\Gal}(K) \otimes B_S) \oplus
\ker(\alpha_{a-1}),
\\
&
(H^{a-2}_{\Gal}(K) \otimes QN_S) \oplus
(H^{a-3}_{\Gal}(K) \otimes B_S) \oplus
\ker(\alpha_a).
\end{align*}
By Lemma \ref{lem:comm-bock} (2),
the summand
$(H^{a-3}_{\Gal}(K) \otimes QN_S) \oplus
(H^{a-4}_{\Gal}(K) \otimes B_S)$
of $H^{a-1, a-2}_{\sM}(M_K)$
is killed by the left vertical map,
because $Q^2=0$ and $B_S=Q\wt{B}_S$.
On the other hand,
$\tau \circ Q$ maps 
$\ker(\alpha_{a-1})$ injectively into 
the summand
$(H^{a-2}_{\Gal}(K) \otimes QN_S) \oplus
(H^{a-3}_{\Gal}(K) \otimes B_S)$
of
$H^{a, a-1}_{\sM}(M_K)$,
showing the first statement.

In particular, we have shown the injectivity of 
$\tau : H^{a+1, a-1}_\sM(X_K) \to H^{a+1, a}_\sM(X_K)$.
Thus the exact sequence from Theorem \ref{thm:TY} (1) applied with $a=b$
shows that $\rho : H^{a, a}_\sM(X_K) \to H_\ur^a(M_K)$ is surjective.
The same exact sequence together with Proposition \ref{prop:coniveau1} shows that
$\rho((H^{a-2}_{\Gal}(K) \otimes QN_S) \oplus (H^{a-3}_{\Gal}(K) \otimes B_S))=0$.
This completes the proof of the last statement.
%
%
%
%
%
%
%
%
%
%
%
%
\end{proof}

\begin{proof}[Proof of Theorem \ref{thm:vishik}]
As the unramified cohomology is 
normalized, birational, and motivic
(Proposition \ref{prop:unram-coh}),
we have
$H^i_{\ur}(S)=H^i_{\ur}(M)$
and
$H^i_{\ur}(K(S)/K)=H^i_{\ur}(M_K)$.
Now
Propositions \ref{prop:coniveau1} and \ref{prop:coniveau2}
complete the proof.
\end{proof}

\section{Main exact sequence}\label{sect:proof}

We keep the assumptions in 
Setting \ref{setting:surf}, \ref{setting:def-M} and \ref{setting:tate-twists}.

\subsection{Main exact sequence}

The following is the main technical 
result of this paper.

\begin{theorem}\label{thm:tech-main}
Suppose that $S \in \SmProj$ 
admits a decomposition of the diagonal
(see Definition \ref{def:dec-diag})
and $\dim S=2$.
Then we have an exact sequence
for any $K \in \Fld$
\[
0 \to \CH_0(S_K)_{\Tor, \Lambda} \to 
\bigoplus_{i=1, 2} \Hom(H_\ur^i(S), H_\ur^i(K/k)) \to 
H^3_\ur(K(S)/k) \to 0.
\]
(Unlike Theorem \ref{thm:vishik},
the last term is
the unramified cohomology over $k$ and not over $K$.)
\end{theorem}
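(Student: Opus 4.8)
The plan is to derive Theorem~\ref{thm:tech-main} from Theorem~\ref{thm:vishik} by specializing to the motivic degree $a=2$ and then passing to a suitable limit over $K$. First I would apply Theorem~\ref{thm:vishik} with $a=2$, which yields the exact sequence
\[
0 \to H^{2,0}_{\sM}(M_K) \to
\bigoplus_{i=1,2} H^{1-i}_{\Gal}(K) \otimes H^i_\ur(S)
\to H^1_\ur(K(S)/K) \to 0,
\]
and I would identify the left-hand term. By \eqref{eq:motcoh2} we have $H^{2,0}_{\sM}(M_K, \Lambda) \cong \CH^0(M_K)_\Lambda$-type expressions; more precisely, for the degree count relevant here one wants $H^{a,a-2}_{\sM}(M_K)$ with $a=2$ to compute $\CH_0$-information about $M_K$. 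Using that $M$ is the torsion summand of $h^\eff(S)$ with $M \cong M^\vee(2)$, the group $H^{2,0}_{\sM}(M_K)$ should be $\Hom$-dual to a group computing $\CH_0(M_{k(K)})$, which by Proposition~\ref{prop:GG-V}(1) and Lemma~\ref{lem:coh-surf} is $\CH_0(S_K)_{\Tor,\Lambda}$ (the Tate-twisted summands $\Lambda, \Lambda(1)^\rho, \Lambda(2)$ contribute only the degree part killed by passing to the normalized/torsion piece). So the first real step is a careful bookkeeping of indices to see that $H^{2,0}_{\sM}(M_K) \cong \CH_0(S_K)_{\Tor,\Lambda}$, using $m$-divisibility/torsion identifications from Setting~\ref{setting:tate-twists} (recall $m$ is chosen divisible by the torsion order, so $\Z/m$-coefficients already see all the torsion).

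Next I would unwind the middle term. Since $H^{-1}_{\Gal}(K)=0$ and $H^0_{\Gal}(K)=\Z/m\Z$, the sum $\bigoplus_{i=1,2} H^{1-i}_{\Gal}(K)\otimes H^i_\ur(S)$ with $a=2$ collapses, so I actually need to take $a$ large enough (or rather, reindex) so that the Galois-cohomology factors $H^{a-i-1}_{\Gal}(K)$ become the full unramified groups $H^i_\ur(K/k)$ in the colimit. Concretely, the map $b \mapsto \pr_1^*(-)\cup\pr_2^*(b)$ in Theorem~\ref{thm:vishik}, combined with the definition \eqref{eq:def-unramcoh1}--\eqref{eq:def-unramcoh2} of unramified cohomology over $k$, identifies $H^{a-i-1}_{\Gal}(K)\otimes H^i_\ur(S)$ after taking the appropriate colimit and the unramified (residue-kernel) part with $\Hom(H^i_\ur(S), H^i_\ur(K/k))$ --- here one uses that $H^i_\ur(S)$ is finite (Lemma~\ref{lem:coh-surf}(3)), so $H^{j}_{\Gal}(K)\otimes H^i_\ur(S) \cong \Hom(\widehat{H^i_\ur(S)}, H^j_\ur(K/k))$ by finiteness and self-duality \eqref{eq:etcoh13-fincoef}, with $j$ chosen to make the unramified degree come out to $i$. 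The cleanest route is probably: fix the coefficient $\Z/m$, run Theorem~\ref{thm:vishik} at the degree $a$ for which the output term is $H^{a-1}_\ur(K(S)/K)$ with the unramified part over $k$ emerging after a residue computation, then take $\colim_m$. I would also invoke \eqref{eq:unramcoh-K-S} to pass between $H^3_\ur(K(S)/k)$ and $H^3_\ur$ of a smooth proper model when needed.

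The third step is to replace the unramified cohomology \emph{over $K$} in Theorem~\ref{thm:vishik} by the unramified cohomology \emph{over $k$} in the target. This is the point flagged in the parenthetical remark of the statement, and I expect it to be the main obstacle. The mechanism is the residue sequence relating $H^n_\ur(K(S)/K)$, $H^n_\ur(K(S)/k)$, and contributions from valuations of $K(S)$ trivial on $k$ but not on $K$ --- i.e., the localization/coniveau comparison already built into Theorem~\ref{thm:TY}(2), which says the $\tau$-Bockstein spectral sequence agrees with the coniveau spectral sequence (shifted). One must check that the extra residues (those ``in the $K$-direction'') are exactly accounted for by the $\CH_0(S_K)_\Tor$ term and the Galois factors $H^{*}_{\Gal}(K)$, so that after the dust settles the only genuinely new, non-$K$-split part of $H^3_\ur$ is over $k$. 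I would handle this by taking $K$ itself to range over (colimits of) function fields and using that $H^i_\ur(-/k)$ is a birational motivic functor (Proposition~\ref{prop:unram-coh}) together with \eqref{eq:unramcoh-K-S}; the finiteness of $H^i_\ur(S)$ keeps all the $\Hom$-groups well-behaved under the colimit. Finally I would check exactness of the resulting three-term sequence is preserved under the (filtered) colimit, which is automatic, and that the maps are the natural ones (the first being the obvious inclusion of $\CH_0(S_K)_\Tor$, the second being cup-product pairing against the unramified classes on $S$), completing the proof.
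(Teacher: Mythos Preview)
Your proposal has a genuine gap in both the choice of degree and, more seriously, the mechanism for passing from unramified cohomology over $K$ to unramified cohomology over $k$.

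First, the specialization $a=2$ in Theorem~\ref{thm:vishik} is wrong: by \eqref{eq:motcoh1} one has $H^{2,0}_\sM(M_K)=0$, and the middle term collapses as you noticed. The correct choice is $a=4$. Then the left term is $H^{4,2}_\sM(M_K)$, which by \eqref{eq:motcoh2} is $\CH^2(M_K)/m \cong \CH_0(S_K)_{\Tor,\Lambda}$ (the Tate summands of $h^\eff(S)$ contribute only the degree, and the torsion is already $m$-torsion by \eqref{eq:torsion-order-S}); the middle term is $\bigoplus_{i=1,2} H^i_\Gal(K)\otimes H^{3-i}_\ur(S)$; and the right term is $H^3_\ur(K(S)/K)$. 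No colimit in $a$ or in $K$ is needed.

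Second, your plan for replacing $H^3_\ur(K(S)/K)$ by $H^3_\ur(K(S)/k)$ is not right: $K$ is fixed, so ``letting $K$ range over colimits of function fields'' does not make sense, and the coniveau/$\tau$-Bockstein comparison of Theorem~\ref{thm:TY} does not directly produce this replacement. What the paper actually does is apply the residue maps (over $k$) simultaneously to the middle and right terms of the $a=4$ sequence, obtaining a commutative square whose vertical maps $\partial_1,\partial_2$ have kernels $\bigoplus_i \Hom(H^i_\ur(S), H^i_\ur(K/k))$ and $H^3_\ur(K(S)/k)$ respectively. The first identification uses that $H^j_\Gal(K)\otimes H^{3-i}_\ur(S)\cong \Hom(H^i_\ur(S), H^j_\Gal(K))$ via the duality \eqref{eq:etcoh13-fincoef} and freeness of $H^j_\Gal(K)$ as a $\Z/m\Z$-module (Lemma~\ref{lem:freeness}). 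The crucial point you are missing is that the induced map between the \emph{targets} of $\partial_1$ and $\partial_2$ is \emph{injective} (Lemma~\ref{lem:inj2}); this is what allows the diagram chase to conclude that the map between kernels remains surjective, yielding the exact sequence over $k$. Without this injectivity step, there is no reason the sequence should stay short exact after restricting to unramified classes over $k$.
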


The proof of Theorem \ref{thm:tech-main}
will be complete in \S \ref{sect:end} below.

\begin{remark}
In the situation of Theorem \ref{thm:tech-main}, 
we have a canonical isomorphism
\begin{equation}
\CH_0(S_K)_{\Tor, \Lambda} \cong 
\Coker(\CH_0(S)_\Lambda \to \CH_0(S_K)_\Lambda),
\end{equation}
and this group is annihilated by 
the integer \eqref{eq:torsion-order-S}.
To see this,
it suffices to note that
the degree map $\CH_0(S_K) \to \Z$ is split surjective
(as $k$ is algebraically closed),
and use Lemma \ref{lem:coh-surf} (4).
As a special case where $K=k(T)$ 
for $T \in \SmProj$,
we also have
(see \eqref{eq:CH0-S})
\begin{equation}\label{eq:CH0S-CHtor}
\CH_0(S_{k(T)})_{\Tor, \Lambda} \cong 
\Chow^\nor_\Lambda(T, S).
\end{equation}
\end{remark}

\subsection{Auxiliary lemmas}

\begin{lemma}\label{lem:freeness}
Let $E$ be a field such that
$m$ is invertible in $E$ 
and $\mu_{m^\infty} \subset E$.
Then $H^j_\Gal(E)$
is a free $\Z/m\Z$-module for any $j \in \Z$.
\end{lemma}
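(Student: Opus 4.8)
The plan is to reduce to the case where $m$ is a prime power and then to obtain freeness from an elementary divisibility criterion for abelian groups of bounded prime-power exponent, the only input from cohomology being the surjectivity of the reduction maps between Galois cohomology groups with $\mu_{\ell^{a}}$-coefficients.

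First I would reduce to $m=\ell^{n}$ with $\ell$ prime. Since $m$ is invertible in $E$ and $\mu_{m^{\infty}}\subset E$, all Tate twists become trivial over $E$; moreover $H^{j}_{\Gal}(E)$ is a module over $\Z/m\Z\cong\prod_{\ell\mid m}\Z/\ell^{v_{\ell}(m)}\Z$ and decomposes into its $\ell$-primary components $H^{j}_{\Gal}(E,\Z/\ell^{v_{\ell}(m)}\Z)$, so it suffices to treat each of these (and the cases $j\le 0$, where the group is $0$ or $\Z/m\Z$, are clear). The key input I would record is that for all $j\ge 0$ and all $a\ge b\ge 1$ the reduction
\[
H^{j}_{\Gal}(E,\Z/\ell^{a}\Z)\longrightarrow H^{j}_{\Gal}(E,\Z/\ell^{b}\Z)
\]
is surjective. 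This is proved exactly as the surjectivity invoked in the proof of Lemma~\ref{lem:comm-bock}: $\mu_{\ell^{\infty}}\subset E$ identifies this with $H^{j}_{\Gal}(E,\mu_{\ell^{a}}^{\otimes j})\to H^{j}_{\Gal}(E,\mu_{\ell^{b}}^{\otimes j})$, which by the norm residue isomorphism theorem (\cite[Theorem~A]{HW}) is the obvious surjection $K^{M}_{j}(E)/\ell^{a}\twoheadrightarrow K^{M}_{j}(E)/\ell^{b}$.

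Next, feeding this surjectivity into the long exact cohomology sequence of $0\to\Z/\ell^{a}\Z\xrightarrow{\ell^{b}}\Z/\ell^{a+b}\Z\to\Z/\ell^{b}\Z\to 0$ kills all the connecting maps and yields short exact sequences
\[
0\to H^{j}_{\Gal}(E,\Z/\ell^{a}\Z)\xrightarrow{\ \ell^{b}\ }H^{j}_{\Gal}(E,\Z/\ell^{a+b}\Z)\to H^{j}_{\Gal}(E,\Z/\ell^{b}\Z)\to 0
\]
for all $j$ and all $a,b\ge 1$. Put $A:=H^{j}_{\Gal}(E)=H^{j}_{\Gal}(E,\Z/\ell^{n}\Z)$. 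Factoring multiplication by $\ell^{k}$ on $\Z/\ell^{n}\Z$ through $\Z/\ell^{n-k}\Z$, and multiplication by $\ell^{n-k}$ through $\Z/\ell^{k}\Z$, and using that the reduction maps are surjective while the two resulting multiplication maps are injective by the displayed sequences, one checks that both $A[\ell^{k}]$ and $\ell^{n-k}A$ equal $\Ker\bigl(H^{j}_{\Gal}(E,\Z/\ell^{n}\Z)\to H^{j}_{\Gal}(E,\Z/\ell^{n-k}\Z)\bigr)$; in particular $A[\ell]=\ell^{n-1}A$.

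It then remains to invoke the elementary fact that an abelian group $A$ with $\ell^{n}A=0$ and $A[\ell]=\ell^{n-1}A$ is free over $\Z/\ell^{n}\Z$: lifting an $\F_{\ell}$-basis of $A/\ell A$ to elements of $A$, these generate $A$ by Nakayama's lemma (the maximal ideal $(\ell)\subset\Z/\ell^{n}\Z$ is nilpotent), so one gets a surjection $\phi\colon F:=(\Z/\ell^{n}\Z)^{(I)}\twoheadrightarrow A$ which is an isomorphism modulo $\ell$; a nonzero element of $\Ker\phi$ has the form $\ell^{e}r'$ with $r'\in F\setminus\ell F$ and $1\le e\le n-1$, whence $\phi(r')\in A[\ell^{e}]\subseteq A[\ell^{n-1}]=\ell A$ (the equality $A[\ell^{n-1}]=\ell A$ following from $A[\ell]=\ell^{n-1}A$ by a short induction), contradicting that $\phi$ is injective modulo $\ell$; hence $\Ker\phi=0$. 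The one point needing care is the bookkeeping in the previous paragraph — keeping straight which map in each short exact sequence is the injective multiplication and which is the surjective reduction, so as to identify $\ell^{n-k}A$ with $A[\ell^{k}]$; the rest is the standard consequence of the norm residue theorem together with the module-theoretic lemma, which fits naturally among the elementary homological-algebra results collected in the appendix.
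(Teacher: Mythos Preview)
Your argument is correct. Both your proof and the paper's hinge on the norm residue isomorphism $K^M_j(E)/\ell^a\cong H^j_\Gal(E,\mu_{\ell^a}^{\otimes j})$, but the algebraic deductions differ. The paper argues that $K^M_j(E)_\Tor$ is $\ell$-divisible (via the surjection $K^M_{j-1}(E)\otimes\mu_{\ell^\infty}\twoheadrightarrow K^M_j(E)_\Tor\otimes\Z_{(\ell)}$), whence $K^M_j(E)\otimes\Z_{(\ell)}$ splits as an $\ell$-divisible group plus a flat $\Z_{(\ell)}$-module, so $K^M_j(E)/\ell^n$ is flat over $\Z/\ell^n\Z$; it then invokes the standard fact that flat equals free over an Artin local ring. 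You instead feed the (trivially surjective) reductions $K^M_j(E)/\ell^a\to K^M_j(E)/\ell^b$ through the long exact sequences to verify directly the criterion $A[\ell]=\ell^{n-1}A$ for $A=H^j_\Gal(E,\Z/\ell^n\Z)$, and then prove by hand that this forces $A$ to be free. Your condition $A[\ell]=\ell^{n-1}A$ is exactly the vanishing of $\Tor_1^{\Z/\ell^n\Z}(\Z/\ell\Z,A)$, so you are in effect reproving the flat-equals-free statement in this special case; this makes your route a bit longer but entirely self-contained, while the paper's is shorter once one is willing to quote the commutative-algebra lemma.
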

\begin{proof}
We may assume $m=\ell^e$ 
for a prime number $\ell \not= p$ and $e \in \Z_{>0}$.
Recall that a module over an Artin local ring
is free if and only if it is flat
(see, e.g. \cite[Proposition 2.1.4]{A}).
By the norm residue isomorphism theorem (see \cite[Theorem 6.16]{V-lcoef}), 
$K_{j-1}^M(E) \otimes \mu_{\ell^\infty}$
surjects onto 
$K_j^M(E)_{\Tor} \otimes \Z_{(\ell)}$,
hence
$K_j^M(E)_{\Tor}$ is divisible by $\ell$.
It follows that $K_j^M(E)$
is the direct sum of an $\ell$-divisible group
and a flat $\Z_{(\ell)}$-module.
Thus
$K_j^M(E) \otimes \Z/m\Z \cong H^j_\Gal(E)$
is a flat $\Z/m\Z$-module.
\end{proof}


By the Poincar\'e duallty,
we have a perfect paring of finite abelian groups
for any $i \in \Z$
\[
\langle -, -\rangle : 
H^{4-i}_\et(S) \times H^i_\et(S)
\to \Z/m\Z.
\]
For $i=1, 2$, we define the homomorphisms
\begin{align}
\label{eq:def-q-pi}
Q_i' : H^{3-i}_\ur(S) \to H^{4-i}_\et(S),
\qquad
\pi_i : H^i_\et(S) \to H^i_\ur(S)
\end{align}
as follows.
For $i=1$, they are given by \eqref{eq:etcoh13-fincoef}.
For $i=2$, $Q_2'$ and $\pi_2$ are the compositions
\[
H^1_\ur(S) \cong H^1_\et(S) \overset{Q}{\to} H^2_\et(S),
\qquad
H^2_\et(S) \overset{Q}{\to} H^3_\et(S) \cong H^2_\ur(S),
\]
where $Q$ are the Bockstein operator \eqref{eq:bockstein}.
(Hence $Q_1'$ and $\pi_1$ are bijective,
and we have a split short exact sequence
$0 \to H_\ur^1(S) \overset{Q_2'}{\to} H^2_\et(S) \overset{\pi_2}{\to}
H_\ur^2(S) \to 0$.)

\begin{lemma}\label{lem:pi-q}
We have a perfect paring of finite abelian groups
for $i=1, 2$
\[
\langle -, -\rangle :
H^{3-i}_\ur(S) \times H^i_\ur(S) \to \Z/m\Z
\]
characterized by the formula
\begin{equation}\label{eq:pairing}
\langle Q_i'(a), b \rangle = \langle a, \pi_i(b) \rangle
\qquad (a \in H^{3-i}_\ur(S), \, b \in H^i_\et(S)).
\end{equation}
\end{lemma}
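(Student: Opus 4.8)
The plan is to construct the pairing on $H^{3-i}_\ur(S) \times H^i_\ur(S)$ directly from the Poincar\'e pairing on $H^{4-i}_\et(S) \times H^i_\et(S)$ via the maps $Q_i'$ and $\pi_i$, and then verify it is well-defined and perfect. Concretely, for $a \in H^{3-i}_\ur(S)$ and a class $\bar b \in H^i_\ur(S)$, I would choose a lift $b \in H^i_\et(S)$ with $\pi_i(b) = \bar b$ and set $\langle a, \bar b \rangle := \langle Q_i'(a), b \rangle$, which is exactly the formula \eqref{eq:pairing} demanded. The two things to check are: (i) this is independent of the choice of lift $b$, i.e. $\langle Q_i'(a), b \rangle = 0$ whenever $\pi_i(b) = 0$; and (ii) the resulting pairing is perfect. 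For $i=1$ there is nothing to do since $Q_1'$ and $\pi_1$ are bijective and the pairing is just the Poincar\'e pairing transported along these isomorphisms, so the real content is $i=2$.

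For well-definedness when $i=2$: if $\pi_2(b)=0$ then $b \in \Im(Q_2') = \ker(\pi_2)$ by the split exact sequence $0 \to H^1_\ur(S) \xrightarrow{Q_2'} H^2_\et(S) \xrightarrow{\pi_2} H^2_\ur(S) \to 0$ recalled before the statement, so write $b = Q_2'(c) = Q(c)$ for some $c \in H^1_\et(S) \cong H^1_\ur(S)$. Then $\langle Q_2'(a), b \rangle = \langle Q(a'), Q(c) \rangle$ where $a' \in H^1_\et(S)$ corresponds to $a$. Here I would invoke the standard compatibility of the Bockstein $Q$ (the connecting map for $0 \to \mu_m \to \mu_{m^2} \to \mu_m \to 0$, see \eqref{eq:bockstein}) with the cup product pairing: $Q$ is (up to sign) self-adjoint for Poincar\'e duality in the sense that $\langle Q x, y \rangle = \pm \langle x, Q y \rangle$, which follows from the Leibniz rule $Q(x \cup y) = Qx \cup y \pm x \cup Qy$ together with $Q = 0$ on $H^4_\et(S) \cong \Z/m\Z$ (the $m$-th power map on $\mu_{m^2}$ is surjective, as used in Lemma \ref{lem:comm-bock}). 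Hence $\langle Q(a'), Q(c)\rangle = \pm \langle a', Q^2 c \rangle = 0$ since $Q^2 = 0$. This also shows symmetry of the construction: one gets the same pairing whether one uses $(Q_i', \pi_i)$ or $(Q_{3-i}', \pi_{3-i})$, because $\langle Q a', b \rangle = \pm \langle a', Q b\rangle$ identifies the two recipes up to sign.

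For perfectness when $i=2$: the pairing $\langle -,-\rangle$ on $H^{1}_\ur(S) \times H^2_\ur(S)$ fits into the Poincar\'e pairing on $H^3_\et(S) \times H^1_\et(S)$ via $Q_2' = Q : H^1_\et \to H^2_\et$... more precisely I would use the perfectness of $\langle-,-\rangle$ on $H^{4-i}_\et(S) \times H^i_\et(S)$ and the two exact sequences: $H^1_\ur(S) \xrightarrow{\sim} H^1_\et(S)$, $0 \to H^1_\ur(S) \xrightarrow{Q} H^2_\et(S) \xrightarrow{\pi_2} H^2_\ur(S) \to 0$, and dually $0 \to H^1_\ur(S) \xrightarrow{Q} H^3_\et(S) \xrightarrow{\sim} H^3_\ur(S) \to 0$ is not quite right — rather $H^3_\et(S) \cong H^3_\ur(S)$ and $H^2_\et(S) \twoheadrightarrow H^3_\ur(S)$ via $Q$. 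Chasing: a nonzero $\bar b \in H^2_\ur(S)$ lifts to $b \in H^2_\et(S)$; I need $a \in H^1_\et(S)$ with $\langle Q a, b\rangle \neq 0$, equivalently $\langle a, Q b \rangle \neq 0$, i.e. $\pi_2(b) = Qb \neq 0$ pairs nontrivially against some $a \in H^1_\et(S) \cong H^3_\ur(S)^\vee$ via Poincar\'e duality on $H^1_\et \times H^3_\et$ — but $Qb = \pi_2(b)$ is the image of $b$ under the composite $H^2_\et \xrightarrow{Q} H^3_\et \cong H^3_\ur$, and $Qb \neq 0$ iff $\bar b \neq 0$ by definition of $\pi_2$ on $H^2_\ur(S)$...

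The main obstacle will be keeping the bookkeeping of which $Q$ goes where straight, since $H^1_\ur(S)$, $N_S = \NS(S)_{\Tor,\Lambda}$, $H^1_\et(M)$, and $H^1_\et(S)_\Tor$ are all identified via \eqref{eq:etcoh13-fincoef}, and similarly for $B_S$; one must ensure that the two appearances of $Q$ (in $Q_2'$ and in $\pi_2$, both instances of the same Bockstein on $M$ or on $S$) are genuinely adjoint under the \emph{Poincar\'e} pairing rather than merely under the cup product, which needs the vanishing $Q = 0$ on $H^0_\et$ and $H^4_\et$. Once the adjointness $\langle Qx, y\rangle = \pm \langle x, Qy\rangle$ is established and $Q^2 = 0$ is used, both well-definedness and nondegeneracy are short diagram chases built on the perfectness of Poincar\'e duality and the splitness of the two short exact sequences defining $\pi_2$ and $Q_2'$; finiteness of all groups involved is already recorded in Lemma \ref{lem:coh-surf}(3) and \eqref{eq:etcoh13-fincoef}.
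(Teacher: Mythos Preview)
Your approach is correct and essentially the same as the paper's: both reduce to the vanishing $\langle Q(a),Q(c)\rangle=0$ for $a,c\in H^1_\et(S)$, proved via the Leibniz rule $Q(x\cup y)=Qx\cup y\pm x\cup Qy$ together with $Q=0$ on $H^4_\et(S)$ (equivalently, the self-adjointness of $Q$ under Poincar\'e duality). The paper compresses everything into the single claim ``it suffices to show $\langle Q_2'(H^1_\ur(S)),Q_2'(H^1_\ur(S))\rangle=0$'', leaving well-definedness (via $\ker\pi_2=\Im Q_2'$) and perfectness (via injectivity of $Q_2'$, surjectivity of $\pi_2$, and equal cardinalities $|N_S|=|B_S|$) implicit, whereas you spell these steps out.
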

\begin{proof}
For $i=1$, \eqref{eq:pairing} is 
nothing other than the paring in Lemma \ref{lem:coh-surf} (3),
whence the result.
Assume now $i=2$.
We claim that
$Q_2'(H_\ur^1(S))$ is the exact annihilator of itself
with respect to $\langle -, - \rangle$.
For this, we first note that
$\langle Q_2'(H_\ur^1(S)), Q_2'(H_\ur^1(S)) \rangle=0$ because 
\[
Q(a) \cup Q(b)
=Q(a) \cup Q(b) - a \cup Q^2(b)
= Q(a \cup Q(b)) = 0
\]
for $a, b \in H_\et^1(S)$.
Here the first (resp. third) equality holds
because $Q^2=0$
(resp. 
$Q : H^3_\et(S) \to H^4_\et(S)$ is the zero map,
as $H^4_\et(S, \Z/m\Z) \to H^4_\et(S, \Z/m^2\Z)$ is injective).
We then use the fact 
$|Q_2'(H_\ur^1(S))|=|H^1_\ur(S)|=|H^2_\ur(S)|=|H^2_\et(S)/Q_2'(H_\ur^1(S))|$
to conclude the claim.
It follows that $\langle -, - \rangle$ induces the perfect paring in the statement
characterized by \eqref{eq:pairing}.
\end{proof}

\begin{lemma}\label{lem:tensor-hom}
Let $E$ be a field satisfying the assumption of
Lemma \ref{lem:freeness}.
Then for $i=1, 2$ and for any $j \in \Z$, we have  isomorphisms
\begin{align*}
&H^j_\Gal(E) \otimes H^{4-i}_\et(S) \cong
\Hom(H^i_\et(S), H^j_\Gal(E)),
\\
&H^j_\Gal(E) \otimes H^{3-i}_\ur(S) \cong
\Hom(H^i_\ur(S), H^j_\Gal(E)).
\end{align*}
\end{lemma}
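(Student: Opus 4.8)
The plan is to reduce both isomorphisms to one elementary statement about free $\Z/m\Z$-modules, using the dualities already available. First I would note that every cohomology group appearing is finite and killed by $m$: this is clear for the \'etale groups with $\Z/m\Z$-coefficients, and for $H^i_\ur(S)$ it follows from \eqref{eq:etcoh13-fincoef} together with the choice of $m_0$ in Setting \ref{setting:tate-twists}. Consequently the $\otimes$ and $\Hom$ in the statement, a priori over $\Z$, may be computed over the ring $\Z/m\Z$. Next I would rewrite the ``upper-degree'' groups as honest linear duals: by Poincar\'e duality (the perfect pairing $H^{4-i}_\et(S) \times H^i_\et(S) \to \Z/m\Z$ recorded just before Lemma \ref{lem:pi-q}) one has $H^{4-i}_\et(S) \cong \Hom_{\Z/m\Z}(H^i_\et(S), \Z/m\Z)$, and by Lemma \ref{lem:pi-q} one has $H^{3-i}_\ur(S) \cong \Hom_{\Z/m\Z}(H^i_\ur(S), \Z/m\Z)$. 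Finally, Lemma \ref{lem:freeness} says $H^j_\Gal(E)$ is a free $\Z/m\Z$-module.

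After these identifications, both claimed isomorphisms take the shape $L \otimes_{\Z/m\Z} \Hom_{\Z/m\Z}(B, \Z/m\Z) \cong \Hom_{\Z/m\Z}(B, L)$ with $L = H^j_\Gal(E)$ free and $B = H^i_\et(S)$ or $B = H^i_\ur(S)$ a finite module. So the remaining task is to establish this general fact through the evaluation map $x \otimes f \mapsto (b \mapsto f(b)x)$. I would write $L$ as a possibly infinite direct sum of copies of $\Z/m\Z$; since the tensor product commutes with arbitrary direct sums, and $\Hom_{\Z/m\Z}(B,-)$ likewise commutes with arbitrary direct sums because $B$ is finitely generated, both sides split up compatibly into copies of the $L = \Z/m\Z$ case, where the evaluation map is manifestly the identity of $\Hom_{\Z/m\Z}(B, \Z/m\Z)$.

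Since this is all formal, I do not expect a genuine obstacle; the one point deserving a word of justification is that $\Hom_{\Z/m\Z}(B,-)$ commutes with infinite direct sums, which is true because $B$ is finitely generated — the images of a finite generating set lie in a finite partial sum. I would also spell out, once and for all, that for $\Z/m\Z$-modules the tensor product and $\Hom$ over $\Z/m\Z$ coincide with those over $\Z$, so that what is proved is literally the assertion of the lemma as written.
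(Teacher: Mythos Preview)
Your proof is correct and follows essentially the same approach as the paper: the paper's proof simply cites Lemma~\ref{lem:freeness} and Lemma~\ref{lem:homalg0}~(2), with the Poincar\'e duality and Lemma~\ref{lem:pi-q} identifications left implicit. You have made those duality identifications explicit and reproved Lemma~\ref{lem:homalg0}~(2) inline (with the same direct-sum argument), so the two arguments are the same in substance.
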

\begin{proof}
This follows from
Lemmas \ref{lem:freeness}, \ref{lem:pi-q} and \ref{lem:homalg0} (2).
\end{proof}

\begin{lemma}\label{lem:inj}
The canonical map
$H^2_\et(\Spec(E \otimes_k k(S)))
\to
H^2_\Gal(E(S))$
is injective
for any $E \in \Fld$. 
\end{lemma}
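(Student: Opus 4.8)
The plan is to reduce the injectivity statement to a known computation of $H^2_{\et}$ of the scheme $\Spec(E \otimes_k k(S))$, and then to identify that group explicitly using the birational and motivic nature of $H^2_{\ur}$ together with the spectral-sequence decomposition already established in Proposition \ref{prop:etcoh-dec} (and Remark \ref{rem:smooth-bc}). First I would observe that $E \otimes_k k(S)$ is the total ring of fractions of $S_E$, so that $H^2_{\et}(\Spec(E \otimes_k k(S)))$ is the colimit of $H^2_{\et}(U)$ over dense open $U \subseteq S_E$; equivalently, it is the generic stalk of the \'etale sheaf $\sH^2$ on $S_E$, which in the coniveau spectral sequence is the $E_1^{0,2}$-term. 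The target $H^2_{\Gal}(E(S))$ is the Galois cohomology of the function field of the (possibly non-integral, but generically smooth) scheme $S_E$; since $k$ is algebraically closed and $S$ is geometrically connected (decomposition of the diagonal implies connectedness, Lemma \ref{lem:coh-surf}), $S_E$ is in fact integral and $E(S) = \Frac(E \otimes_k k(S))$, so the map in question is simply the edge map from the generic stalk of $\sH^2$ into the Galois cohomology of the generic point.

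The key step is then to show that this edge map is injective, i.e. that the differential $d_1 \colon E_1^{0,2} \to E_1^{1,2}$ in the coniveau spectral sequence for $S_E$ — whose source is $H^2_{\Gal}(E(S))$-valued, wait, more precisely whose source $E_1^{0,2}$ is a subquotient of $H^2_{\Gal}(E(S))$ — actually has $E_1^{0,2}$ embedding into $H^2_{\Gal}(E(S))$. By the Gersten-type resolution for \'etale cohomology with torsion coefficients on the regular scheme $S_E$ (Bloch--Ogus), $H^0_{\Zar}(S_E, \sH^2) = H^2_{\ur,m}(E(S)/E)$ injects into $H^2_{\Gal}(E(S))$ essentially by the very definition \eqref{eq:def-unramcoh1} of unramified cohomology as a subgroup of Galois cohomology of the function field. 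So I would argue: $H^2_{\et}(\Spec(E \otimes_k k(S))) = H^2_{\ur,m}(S_E)$ — using properness of $S$ over $k$ and hence of $S_E$ over $E$, via the analogue of \eqref{eq:unramcoh-K-S} over the base $E$ — and the latter is by construction a subgroup of $H^2_{\Gal}(E(S))$, which is exactly the assertion.

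The main obstacle I anticipate is the identification $H^2_{\et}(\Spec(E \otimes_k k(S))) = H^2_{\ur,m}(S_E)$: a priori the left-hand side is only the generic stalk $\colim_U H^2_{\et}(U)$, and to see it equals the global sections $H^0_{\Zar}(S_E, \sH^2)$ one needs that $\sH^2$ has no $\Zar$-cohomology obstruction at codimension $\ge 1$ in low degree — this is precisely where Bloch--Ogus (the Gersten conjecture for \'etale cohomology, valid since $S_E$ is smooth over the field $E$) is used, giving the flasque Gersten resolution and hence $H^0_{\Zar}(S_E,\sH^2) = \ker(H^2_{\Gal}(E(S)) \to \bigoplus_{x} H^1_{\Gal}(E(x)))$ and, crucially, the vanishing of higher differentials landing in the degree-$2$ row from the $0$-line. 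Alternatively, and perhaps more cleanly, one can use Theorem \ref{thm:TY} (2): the $\tau$-Bockstein spectral sequence identifies $H^0_{\Zar}(S_E,\sH^2_m)$ with a piece of ${}^\dagger E_3$ of the coniveau spectral sequence, and combined with the degeneration in Proposition \ref{prop:etcoh-dec} (applied to $X = S$ over the base field $E$, via Remark \ref{rem:smooth-bc}) one reads off that the edge map from $H^2_{\et}$ of the generic point is injective because the relevant graded pieces of $H^2_{\Gal}(E(S))$ that could be "missed" are exactly those killed by the earlier differentials, and there are none in the range $a \le b$ by \eqref{eq:motcoh3}. I would present the argument via this second route, since the spectral-sequence machinery of \S\ref{sect:vishik} is already set up and directly applicable.
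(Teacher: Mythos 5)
There is a genuine gap, and it starts at the very first step. You assert that $E \otimes_k k(S)$ is the total ring of fractions of $S_E$, i.e., that $\Spec(E \otimes_k k(S))$ is the generic point of $S_E$ and $E \otimes_k k(S) = E(S)$. This is false in general: $E \otimes_k k(S)$ is a domain but usually not a field (take $E = k(t)$; then $t + x$ is not invertible in $k(t) \otimes_k k(S)$ for $x \in k(S)$ transcendental), and $\Spec(E \otimes_k k(S)) = \lim_{U \subset S} U_E$ is a positive-dimensional localization of $S_E$ strictly between $S_E$ and its generic point. If the source of the map were really the generic stalk of $\sH^2$ on $S_E$, that stalk would already equal $H^2_{\Gal}(E(S))$ and the lemma would be vacuous — the nontrivial content is precisely that you only take the colimit over opens of the form $U_E$ with $U \subseteq S$, not over all opens of $S_E$. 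From this initial slip you are led to conflate two different objects: the generic stalk of $\sH^2$ (which is $H^2_{\Gal}(E(S))$) and $H^0_{\Zar}(S_E, \sH^2_m) = H^2_{\ur,m}(S_E)$, which is only a subgroup of it. Your proposed identification $H^2_{\et}(\Spec(E \otimes_k k(S))) = H^2_{\ur,m}(S_E)$ is not correct, and the appeal to properness and the analogue of \eqref{eq:unramcoh-K-S} does not address the issue (\eqref{eq:unramcoh-K-S} identifies $H^2_{\ur}(S_E)$ with $H^2_{\ur}(E(S)/E)$, a different statement).

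More seriously, your plan makes no use of the decomposition-of-the-diagonal hypothesis, which is essential here. The Bloch--Ogus spectral sequence on the smooth open $U_E$ gives the exact sequence
\[
0 \longrightarrow \Pic(U_E)/m\Pic(U_E) \longrightarrow H^2_{\et}(U_E) \longrightarrow H^2_{\ur,m}(U_E) \longrightarrow 0,
\]
so after passing to the colimit over $U \subset S$ one must kill the $\Pic$-term to get injectivity into $H^2_{\Gal}(E(S))$. The paper does exactly this: since $S$ has a decomposition of the diagonal, Lemma \ref{lem:coh-surf}~(4) gives $\Pic(S_E) = \NS(S)$, a finitely generated group; choosing a curve $C \subset S$ whose components generate $\NS(S)$ and taking $U \subset S \setminus C$ forces $\Pic(U_E) = 0$, so $H^2_{\et}(U_E) \hookrightarrow H^2_{\ur,m}(U_E) \hookrightarrow H^2_{\Gal}(E(S))$ for such $U$, and the colimit inherits injectivity. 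Without the hypothesis on $S$, $\Pic(S_E)$ need not be generated by divisors coming from $S$, the $\Pic(U_E)/m$ term survives the colimit, and the statement fails. Your proposal would need to incorporate this step to close the argument.
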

\begin{proof}
We consider a commutative diagram with exact row:
\[
\xymatrix{
& & 
H^2_\et(\Spec(E \otimes_k k(S))) \ar[r] &
H^2_\Gal(E(S)) &
\\
0 \ar[r] &
\Pic(U_E)/m \Pic(U_E) \ar[r]  &
H^2_\et(U_E) \ar[r]_{\gamma_U} \ar[u] &
H^2_{\ur, m}(U_E) \ar[r] \ar@{_{(}-{>}}[u]_{\iota_U}&
0,
}
\]
where $U$ is an open dense subscheme of $S$.
Since the map in question is obtained as the 
colimit of $\iota_U \circ \gamma_U$ 
as $U$ ranges over such schemes,
it suffices to show the vanishing of
the lower left group 
for sufficiently small $U \subset S$.
For this, we take a (possibly reducible) curve 
$C \subset S$ whose components generate $\NS(S)$.
Then we find $\Pic(U_E)=0$
as soon as $U \subset S \setminus C$,
because we have $\Pic(S_E)=\NS(S)$ 
by Lemma \ref{lem:coh-surf} (4).
We are done.
\end{proof}

\begin{lemma}\label{lem:inj2}
For any $E \in \Fld$,
the map
\begin{equation}\label{eq:phi}
\bigoplus_{i=1, 2}
H^{i-1}_\Gal(E) \otimes H^{3-i}_\ur(S)
\to 
H^2_\Gal(E(S)),
\quad
a \otimes b \mapsto \pr_1^*(a) \cup \pr_2^*(b)
\end{equation}
is injective,
where $\pr_i$ denotes the respective projectors 
on $\Spec(E) \times S$.
\end{lemma}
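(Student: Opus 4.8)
The plan is to recognize the map in question --- denote it by $\phi$ --- as the map $\Psi$ of Theorem~\ref{thm:vishik}, specialized to $a=3$ and $K=E$, followed by the tautological inclusion of unramified into Galois cohomology, and then to read off injectivity from the vanishing of the motivic cohomology group that Theorem~\ref{thm:vishik} identifies with $\ker\Psi$.

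The first step is to match the two source groups: the domain of $\phi$ is $\bigoplus_{i=1,2}H^{i-1}_\Gal(E)\otimes H^{3-i}_\ur(S)=\bigl(H^0_\Gal(E)\otimes H^2_\ur(S)\bigr)\oplus\bigl(H^1_\Gal(E)\otimes H^1_\ur(S)\bigr)$, and after the reindexing $i\leftrightarrow 3-i$ this is precisely the middle term $\bigoplus_{i=1,2}H^{3-i-1}_\Gal(E)\otimes H^i_\ur(S)$ of the exact sequence of Theorem~\ref{thm:vishik} for $a=3$, $K=E$; the cup-product formulas defining $\phi$ and $\Psi$ are the same. The second step is to check that $\phi$ factors as $\iota\circ\Psi$, with $\iota\colon H^2_\ur(E(S)/E)\hookrightarrow H^2_\Gal(E(S))$ the inclusion of \eqref{eq:def-unramcoh1}. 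Since $S_E$ is a smooth proper integral $E$-scheme with function field $E(S)$, by \eqref{eq:unramcoh-K-S} over $E$ it suffices to see that $\pr_1^*(a)\cup\pr_2^*(b)$ has zero residue along every prime divisor $D$ of $S_E$: the class $\pr_1^*(a)$ extends over all of $S_E$, so it is unramified along every $D$; and $D$ dominates either all of $S$, in which case the valuation $v_D$ is trivial on $k(S)$ and hence $\pr_2^*(b)$ is unramified at $v_D$, or a prime divisor $C$ of $S$, in which case $\partial_{v_D}(\pr_2^*(b))=\partial_C(b)=0$ because $b$ is unramified on $S$.

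The third step concludes the argument: since $\iota$ is injective it is enough to prove that $\Psi$ is injective, and Theorem~\ref{thm:vishik} with $a=3$, $K=E$ gives $\ker\Psi\cong H^{3,1}_\sM(M_E)$, which vanishes by \eqref{eq:motcoh1} since $3>2\cdot 1$. I expect the only part needing any care to be the factorization through $H^2_\ur(E(S)/E)$ in the second step --- that is, the residue computation for $\pr_2^*(b)$ (and the observation for $\pr_1^*(a)$) --- the rest being essentially bookkeeping with Theorem~\ref{thm:vishik}. (Alternatively one could avoid Theorem~\ref{thm:vishik} altogether: $\phi$ manifestly factors through $H^2_\et(\Spec(E\otimes_k k(S)))$, which injects into $H^2_\Gal(E(S))$ by Lemma~\ref{lem:inj}, and then it remains to decompose $H^2_\et(\Spec(E\otimes_k k(S)))$ in the spirit of Proposition~\ref{prop:etcoh-dec}, using continuity of \'etale cohomology; but on that route the delicate point is keeping track of the $\Z/m\Z$-module structure across the K\"unneth isomorphism, so the argument through Theorem~\ref{thm:vishik} is the cleaner one.)
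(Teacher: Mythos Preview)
Your argument is correct, but it is not the route the paper takes. The paper proves Lemma~\ref{lem:inj2} directly, without invoking Theorem~\ref{thm:vishik}: it factors $\phi$ as
\[
\bigoplus_{i=1,2} H^{i-1}_\Gal(E)\otimes H^{3-i}_\ur(S)
\hookrightarrow
\bigoplus_{i=1,2} H^{i-1}_\Gal(E)\otimes H^{3-i}_\Gal(k(S))
\hookrightarrow
H^2_\et(\Spec(E\otimes_k k(S)))
\hookrightarrow
H^2_\Gal(E(S)),
\]
where the first map is injective because $H^{i-1}_\Gal(E)$ is a free $\Z/m\Z$-module (Lemma~\ref{lem:freeness}; note $\mu_{m^\infty}\subset k\subset E$), the second by the K\"unneth-type decomposition of Remark~\ref{rem:smooth-bc} (applied via continuity to $\Spec k(S)$), and the third by Lemma~\ref{lem:inj}. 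So the paper's proof is exactly your parenthetical ``alternative'' approach, and the point you flag there about the $\Z/m\Z$-module structure is handled cleanly by Lemma~\ref{lem:freeness}.

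Your main route via Theorem~\ref{thm:vishik} is a legitimate shortcut: it trades the elementary Lemma~\ref{lem:freeness} argument for the heavier Vishik machinery already assembled in \S\ref{sect:vishik}. One remark: your ``second step'' (the residue computation showing the image lands in $H^2_\ur(E(S)/E)$) is redundant. The statement of Theorem~\ref{thm:vishik} already asserts that $\Psi$ takes values in $H^{a-1}_\ur(K(S)/K)$ with the stated cup-product formula, so once you have matched the source groups and formulas in step~1, the factorization $\phi=\iota\circ\Psi$ is immediate.
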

\begin{proof}
We decompose \eqref{eq:phi} as follows:
\begin{align*}
\bigoplus_{i=1, 2}
H^{i-1}_\Gal(E) \otimes H^{3-i}_\ur(S)
&\hookrightarrow
\bigoplus_{i=1, 2}
H^{i-1}_\Gal(E) \otimes H^{3-i}_\Gal(k(S))
\\
&\hookrightarrow
H^2_\et(\Spec(E \otimes_k k(S)))
\hookrightarrow
H^2_\Gal(E(S)).
\end{align*}
The injectivity of the first map
follows from Lemma \ref{lem:freeness},
since $H^i_\ur(S)=H^i_\ur(k(S)/k)$ 
is a subgroup of $H^i_\Gal(k(S))$ by definition
(see \eqref{eq:def-unramcoh1}, \eqref{eq:unramcoh-K-S}).
The second (resp. third) map is also injective
by Remark \ref{rem:smooth-bc} (resp. Lemma \ref{lem:inj}).
\end{proof}

\subsection{End of the proof}\label{sect:end}
We consider a commutative diagram
\[
\xymatrix{
0 \ar[r] &
\CH_0(S_K)_{\Tor, \Lambda} \ar[r] &
\underset{i=1, 2}{\bigoplus}
H^i_{\Gal}(K) \otimes H^{3-i}_{\ur}(S) 
\ar[r]^-\Psi \ar[d]^{\partial_1} &
H^3_{\ur}(K(S)/K) \ar[r] \ar[d]^{\partial_2} &
0
\\
& &
\underset{i=1, 2}{\bigoplus}
\underset{v}{\bigoplus}
H^{i-1}_{\Gal}(F_v) \otimes H^{3-i}_{\ur}(S) \ar[r]_-\psi &
\underset{w}{\bigoplus} H^2_\Gal(F_w).  &
}
\]
The upper row is an exact sequence
obtained by setting $a=4$ 
and replacing $i$ with $3-i$
in Theorem \ref{thm:vishik}.
In the lower row,
$v$ (resp. $w$)
ranges over all discrete valuations of $K$
(resp. $K(S)$) that are trivial on $k$,
and $F_v$ (resp. $F_w$)
denotes the residue field.
For each $v$, let $w(v)$ be 
an extension of $v$ to $K(S)$.
Then the $(v, w(v))$-component of 
$\psi$ is given by \eqref{eq:phi} for $E=F_v$,
and the other components are zero.
The two vertical maps are the residue maps
recalled in \S \ref{sect:unram-coh}.

Lemma \ref{lem:inj2} shows that $\psi$ is injective.
By Lemma \ref{lem:tensor-hom} we have
isomorphisms
\begin{align*}
&H^i_{\Gal}(K) \otimes H^{3-i}_{\ur}(S)
\cong
\Hom(H^i_{\ur}(S), H^i_{\Gal}(K)),
\\
&H^{i-1}_{\Gal}(F_v) \otimes H^{3-i}_{\ur}(S)
\cong
\Hom(H^i_{\ur}(S), H^{i-1}_{\Gal}(F_v)).
\end{align*}
By \eqref{eq:def-unramcoh1} and
the left exactness of 
$\Hom(H^i_{\ur}(S), -)$,
we obtain
\begin{align*}
\ker({\partial_1})=
\underset{i=1, 2}{\bigoplus}
\Hom(H^i_\ur(S), H^i_\ur(K/k)).
\end{align*}
On the other hand, since
$H^3_\ur(K(S)/k) \subset H^3_\ur(K(S)/K) \subset H^3_\Gal(K(S))$
we have
\begin{align*}
\ker({\partial_2})
&= H^3_\ur(K(S)/K) \cap \ker(H^3_\Gal(K(S)) \to \bigoplus_w H^2_\Gal(F_w))
=H^3_\ur(K(S)/k).
\end{align*}
Now a diagram chase completes
the proof of
Theorem \ref{thm:tech-main}.
\qed

\begin{remark}
It is not always the case that
$H^i_\ur(K/k) \otimes H^{3-i}_{\ur}(S)
\cong
\Hom(H^i_{\ur}(S), H^i_\ur(K/k))$.
\end{remark}

\section{Main results}

In this section,
we suppose $k$ is algebraically closed
and $\Lambda=\Z[1/p]$.

\subsection{An exact sequence}

\begin{theorem}\label{thm:main2-full}
Let $S, T \in \SmProj$.
Suppose that $S$ 
admits a decomposition of the diagonal
and $\dim S=2$.
Then we have an exact sequence
\begin{equation}\label{eq:Vishik-ex-seq-full}
0 \to \CH_0(S_{k(T)})_{\Tor, \Lambda} 
\overset{\Phi}{\to} 
\bigoplus_{i=1, 2} \Hom(H_\ur^i(S), H_\ur^i(T)) 
\to H^3_\ur(S \times T) \to 0.
\end{equation}
\end{theorem}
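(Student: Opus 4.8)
The plan is to obtain Theorem~\ref{thm:main2-full} as the special case $K=k(T)$ of the function-field statement Theorem~\ref{thm:tech-main}, into which all the substantive work of \S\ref{sect:vishik} and \S\ref{sect:proof} has already been absorbed. First I would reduce to the case that $T$ is connected --- hence integral, being smooth --- by treating the connected components of $T$ separately (note this reduction is genuinely needed, since for disconnected $T$ the total ring of fractions $k(T)$ is not a field and $S\times T$ is not integral). Applying Theorem~\ref{thm:tech-main} with $K:=k(T)$ then produces directly an exact sequence
\[
0 \to \CH_0(S_{k(T)})_{\Tor, \Lambda} \to \bigoplus_{i=1,2} \Hom\bigl(H^i_\ur(S), H^i_\ur(k(T)/k)\bigr) \to H^3_\ur(k(T)(S)/k) \to 0,
\]
so the only remaining task is to identify the last two nonzero terms with $\bigoplus_{i=1,2}\Hom(H^i_\ur(S),H^i_\ur(T))$ and with $H^3_\ur(S\times T)$, respectively.

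For the middle term, since $T$ is integral and proper over $k$, the comparison isomorphism \eqref{eq:unramcoh-K-S} gives $H^i_\ur(k(T)/k)\cong H^i_\ur(T)$, and substituting this inside each $\Hom$ is immediate. For the last term I would note that, $k$ being algebraically closed, $S$ and $T$ are geometrically integral, so $S\times T$ is an integral smooth projective $k$-variety; its total ring of fractions is a field, canonically identified with $k(T)(S)$, because $S_{k(T)}$ is the generic fiber of the projection $S\times T\to T$ and hence is integral with function field $k(S\times T)$. Applying \eqref{eq:unramcoh-K-S} to $S\times T$ therefore yields $H^3_\ur(S\times T)\cong H^3_\ur(k(S\times T)/k)=H^3_\ur(k(T)(S)/k)$. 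Feeding these isomorphisms into the displayed sequence gives \eqref{eq:Vishik-ex-seq-full}, with $\Phi$ the composite of the injection of Theorem~\ref{thm:tech-main} with the identification of its target.

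I do not expect a genuine obstacle at this stage: the deep input is entirely carried by Theorem~\ref{thm:tech-main} (and thus by Theorem~\ref{thm:vishik} and Proposition~\ref{prop:GG-V2}), and what remains is routine bookkeeping --- the function-field identification $k(T)(S)=k(S\times T)$, the integrality and properness needed for \eqref{eq:unramcoh-K-S}, and the reduction to connected $T$ --- all of which are straightforward over an algebraically closed field. The one point I would be careful to record alongside the statement is the canonical isomorphism $\CH_0(S_{k(T)})_{\Tor,\Lambda}\cong\Chow^\nor_\Lambda(T,S)$ of \eqref{eq:CH0S-CHtor}, since rewriting the first term in this way is exactly what makes \eqref{eq:Vishik-ex-seq-full} usable in the Yoneda-type deduction of Theorem~\ref{thm:main1}.
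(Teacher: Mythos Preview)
Your proposal is correct and follows exactly the paper's approach: the paper's proof is the one-line ``Apply Theorem~\ref{thm:tech-main} to $K=k(T)$ and use \eqref{eq:unramcoh-K-S},'' and you have simply spelled out the routine bookkeeping (reduction to connected $T$, the identification $k(T)(S)=k(S\times T)$, and the application of \eqref{eq:unramcoh-K-S}) that this sentence encodes.
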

\begin{proof}
Apply Theorem \ref{thm:tech-main} to $K=k(T)$
and use \eqref{eq:unramcoh-K-S}.
Note that the injectivity of $\Phi$
follows also from Proposition \ref{prop:GG-V2}
together with \eqref{eq:CH0S-CHtor}.
\end{proof}

\begin{remark}\label{ex:Bruno2}
Using Lemma \ref{lem:homalg},
we may rewrite \eqref{eq:Vishik-ex-seq-full}
as follows:
\begin{equation}
0 \to \CH_0(S_{k(T)})_{\Tor, \Lambda} 
\to
\bigoplus_{i=1, 2} \Tor(H_\ur^{3-i}(S), H_\ur^i(T)) 
\to H^3_\ur(S \times T) \to 0.
\end{equation}
This, together with \eqref{eq:etcoh13-fincoef},
recovers Kahn's exact sequence
\cite[Corollary 6.4]{K1}
as a special case $T=S$.
It also recovers
\cite[Corollary 6.5]{K1}
as the case $\dim T=1$.
The general case should 
be compared with \cite[Theorem 6.3]{K1},
where the map
\[
\CH_0(S_{k(T)})_{\Tor, \Lambda} 
\to
\bigoplus_{i=1, 2} 
\prod_{\ell \not= p}
\Tor(H_\et^{3-i}(S, \Z_\ell), H_\et^i(T, \Z_\ell)) 
\]
is studied.
\end{remark}

\subsection{Faithful property of unramified cohomology}

\begin{theorem}\label{thm:main1-full}
Let $S, T \in \SmProj$.
Suppose that $S$ 
admits a decomposition of the diagonal
and $\dim S=2$.
Let $f : T \to S$ be a morphism in $\Chow^\nor_\Lambda$.
Then the following are equivalent:
\begin{enumerate}
\item We have $f=0$ in $\Chow^\nor_\Lambda(T, S)$.
\item The map
$F(f) : F(S) \to F(T)$ vanishes
for any normalized, birational, and motivic functor 
$F : \SmProj^\op \to \lMod$.
\item The map
$H^i_\ur(f) : H^i_\ur(S) \to H^i_\ur(T)$ 
vanishes for $i=1, 2$.
\end{enumerate}
\end{theorem}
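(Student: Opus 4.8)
The plan is to prove the chain of implications $(1)\Rightarrow(3)\Rightarrow(2)\Rightarrow(1)$, since the cyclic arrangement makes each single arrow relatively light. The implication $(1)\Rightarrow(3)$ is immediate: condition $(3)$ is simply condition $(1)$ specialized to the particular functor $F=\Chow^\nor_\Lambda(-,S)$, which is normalized, birational, and motivic by the example following Lemma~\ref{lem:bir-funct-factor}. This is exactly Lemma~\ref{lem:yoneda}, so I would just cite it; in fact $(1)\Leftrightarrow(3)$ already holds in full generality by that lemma, and the real content of the theorem is $(2)\Leftrightarrow(1)$, i.e. that the two unramified cohomology functors $H^1_\ur$ and $H^2_\ur$ already detect whether $f$ acts trivially on \emph{all} normalized birational motivic functors.

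For $(3)\Rightarrow(2)$: the unramified cohomology functors $H^i_\ur$ are normalized, birational, and motivic by Proposition~\ref{prop:unram-coh}, so by Lemma~\ref{lem:bir-funct-factor} they factor through $\Chow^\nor_\Lambda$. A morphism $f\colon T\to S$ in $\Chow^\nor_\Lambda$ induces $H^i_\ur(f)\colon H^i_\ur(S)\to H^i_\ur(T)$ by applying this factored functor, and by the Yoneda-style reasoning $H^i_\ur(f)$ is the image of $f=\Chow^\nor_\Lambda(f,S)(\id_S)$ under the natural transformation $\Chow^\nor_\Lambda(-,S)\Rightarrow\Hom(H^i_\ur(S),H^i_\ur(-))$ sending a correspondence to its induced action. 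Since $(3)$ says $\Chow^\nor_\Lambda(f,S)(\id_S)=f=0$ in $\Chow^\nor_\Lambda(T,S)$, this forces $H^i_\ur(f)=0$ for $i=1,2$.

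For $(2)\Rightarrow(1)$, equivalently $(2)\Rightarrow(3)$: here is where the main exact sequence enters. By \eqref{eq:CH0S-CHtor} we have $\Chow^\nor_\Lambda(T,S)\cong\CH_0(S_{k(T)})_{\Tor,\Lambda}$, and under this identification $f$ corresponds to an element of the torsion Chow group. Theorem~\ref{thm:main2-full} gives an \emph{injection}
\[
\Phi\colon \CH_0(S_{k(T)})_{\Tor,\Lambda}\hookrightarrow \bigoplus_{i=1,2}\Hom(H^i_\ur(S),H^i_\ur(T)),
\]
and I would check that $\Phi(f)$ is precisely $\bigl(H^1_\ur(f),H^2_\ur(f)\bigr)$ — this compatibility is essentially built into the construction of the map $\Psi$ in Theorem~\ref{thm:vishik} (the cup-product formula $\pr_1^*(a)\cup\pr_2^*(b)$ is exactly how a correspondence acts on unramified classes) and should be traced through the diagram chase in \S\ref{sect:end}. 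Granting this, condition $(2)$ says $\Phi(f)=0$, and injectivity of $\Phi$ forces $f=0$ in $\Chow^\nor_\Lambda(T,S)$, which is $(3)$; then $(3)\Rightarrow(1)$ follows from Lemma~\ref{lem:yoneda}.

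The main obstacle is the compatibility claim in the last paragraph: one must verify that the abstract map $\Phi$ coming out of Vishik's method really computes the concrete action $H^i_\ur(f)$ of the correspondence $f$ on unramified cohomology. Everything else is formal category theory (Yoneda, factorization of functors through $\Chow^\nor_\Lambda$) plus an appeal to the already-proved Theorems~\ref{thm:main2-full} and~\ref{thm:vishik}; but pinning down that $\Phi(f)=(H^1_\ur(f),H^2_\ur(f))$ requires carefully unwinding how the correspondence on $S\times T$ gives the class in $\CH_0(S_{k(T)})_{\Tor,\Lambda}$ and then how $\Psi$ (hence $\Phi$) sends it back into $\bigoplus_i\Hom$, comparing this with the definition of the pullback $H^i_\ur(f)$ on cycle cohomology.
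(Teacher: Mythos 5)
Your proposal follows essentially the same route as the paper: the equivalence $(1)\Leftrightarrow(3)$ is Lemma~\ref{lem:yoneda}, and $(2)\Rightarrow(3)$ rests on the injectivity of $\Phi$ from Theorem~\ref{thm:main2-full} together with the identification of $\Phi(f)$ with the pair $(H^1_\ur(f),H^2_\ur(f))$. However, the step you flag as ``I would check that $\Phi(f)$ is precisely $(H^1_\ur(f),H^2_\ur(f))$'' and then defer is exactly the paper's Lemma~\ref{lem:compatible}, and it is not an automatic consequence of the shape of the map $\Psi$ in Theorem~\ref{thm:vishik}. The map $\Phi$ is extracted from Vishik's method via the diagram chase in \S\ref{sect:end}, whereas $H^i_\ur(f)$ is the correspondence action on unramified cohomology; bridging the two requires (a) showing that the cycle-class map $\CH_0(S_{k(T)})_{\Tor,\Lambda}\to H^4_\et(S_{k(T)})$, followed by the K\"unneth and Poincar\'e-duality identification, agrees with the correspondence action on \'etale cohomology --- a computation that invokes the projection formula and proper base change --- and (b) checking that this passes correctly through the splittings $\pi_i\colon H^i_\et(S)\to H^i_\ur(S)$ of \eqref{eq:def-q-pi} and the pairing of Lemma~\ref{lem:pi-q}, together with Theorem~\ref{thm:TY}~(3) which identifies $\tau^a$ with the cycle map. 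So the logical skeleton is right, and your identification of the crux is accurate, but the one non-formal ingredient in the whole proof is named rather than carried out; as written, $(2)\Rightarrow(3)$ is incomplete without it.
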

\begin{proof}
The implications
(1) $\Rightarrow$ (2) $\Rightarrow$ (3) are obvious,
and (3) $\Rightarrow$ (1)
follows from Theorem \ref{thm:main2-full}
and Lemma \ref{lem:compatible} below.
\end{proof}

\begin{lemma}\label{lem:compatible}
Under the identification
$\CH_0(S_{k(T)})_{\Tor, \Lambda}
= \Chow^\nor_\Lambda(T, S)$
from \eqref{eq:CH0S-CHtor},
the map $\Phi$ in \eqref{eq:Vishik-ex-seq-full}
is induced by  the functors $H_\ur^i$ for $i=1, 2$.
\end{lemma}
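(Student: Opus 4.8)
The plan is to unwind the construction of $\Phi$ in \S\ref{sect:end} and recognize it as the correspondence action on unramified cohomology. First I would identify $\CH_0(S_{k(T)})_{\Tor,\Lambda}$ with $\CH_0(M_{k(T)})_\Lambda$ via \eqref{eq:CH0S-CHtor}; since this group is annihilated by the integer in \eqref{eq:torsion-order-S} it is unaffected by $\otimes\Z/m\Z$, so by \eqref{eq:motcoh2} it equals $H^{4,2}_{\sM}(M_{k(T)})$. Tracing Theorem~\ref{thm:tech-main}, which applies Theorem~\ref{thm:vishik} with $a=4$ and $K=k(T)$, the map $\Phi$ becomes the composite of: the iterated comparison map $\tau^2\colon H^{4,2}_{\sM}(M_{k(T)})\hookrightarrow H^4_{\et}(M_{k(T)})$, which by Theorem~\ref{thm:TY}(3) is the cycle class map; the K\"unneth decomposition $H^4_{\et}(M_{k(T)})\cong\bigoplus_j H^j_{\Gal}(k(T))\otimes H^{4-j}_{\et}(M)$ of Proposition~\ref{prop:etcoh-dec}, into whose $j=1,2$ summands --- in fact into $\bigl(H^2_{\Gal}(k(T))\otimes QN_S\bigr)\oplus\bigl(H^1_{\Gal}(k(T))\otimes B_S\bigr)$ --- the image of $\tau^2$ lands by Propositions~\ref{prop:coniveau1} and~\ref{prop:coniveau2}; the identifications $QN_S\cong N_S=H^1_{\ur}(S)$ and $B_S=H^3_{\et}(M)=H^2_{\ur}(S)$ of \eqref{eq:etcoh13-fincoef} together with the isomorphisms $H^j_{\Gal}(k(T))\otimes H^{3-i}_{\ur}(S)\cong\Hom\bigl(H^i_{\ur}(S),H^j_{\Gal}(k(T))\bigr)$ coming from Poincar\'e duality on $M$ and Lemma~\ref{lem:pi-q}; and finally the passage to the unramified subgroups $\Hom\bigl(H^i_{\ur}(S),H^i_{\ur}(T)\bigr)$ carried out in \S\ref{sect:end} using \eqref{eq:def-unramcoh1} and \eqref{eq:unramcoh-K-S}.

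It then remains to match this composite with $\gamma\mapsto\bigl(H^1_{\ur}(\gamma),H^2_{\ur}(\gamma)\bigr)$. I would use the classical description of the action of a correspondence $\Gamma$ (lifting $\gamma$, with $H^i_{\ur}(S)=H^i_{\ur}(M)$ for $i=1,2$ since $H^i_{\ur}$ is normalized) on \'etale cohomology: $\Gamma^*(x)=\pr_{1,*}\bigl(\mathrm{cl}(\Gamma)\cup\pr_2^*x\bigr)$, which by the projection formula and Poincar\'e duality on $M$ depends only on the K\"unneth component of $\mathrm{cl}(\Gamma)$ in $H^i_{\et}(T)\otimes H^{4-i}_{\et}(M)$ and realizes $\Gamma^*|_{H^i_{\et}(M)}$ as contraction against it through the Poincar\'e pairing $H^i_{\et}(M)\times H^{4-i}_{\et}(M)\to\Z/m\Z$. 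Combined with the naturality for correspondences of the comparison map $\rho$ of Theorem~\ref{thm:TY}(1) --- through which the action on the subquotient $H^i_{\ur}$ is read off from the action on $H^i_{\et}$ --- this shows $H^i_{\ur}(\gamma)$ is exactly the homomorphism attached to the K\"unneth component of $\mathrm{cl}(\gamma)$. The two prescriptions coincide provided the perfect pairing of Lemma~\ref{lem:pi-q} used to form $H^j_{\Gal}(k(T))\otimes H^{3-i}_{\ur}(S)\cong\Hom(H^i_{\ur}(S),H^j_{\Gal}(k(T)))$ is the restriction of the Poincar\'e pairing on $M$ to the summands $QN_S$, $\wt{B}_S$ of \eqref{eq:etcoh2-fincoef} for $i=2$, and the tautological pairing of $N_S=H^1_{\et}(M)$ with $B_S=H^3_{\et}(M)$ for $i=1$ --- which is precisely what the defining formula \eqref{eq:pairing} of Lemma~\ref{lem:pi-q} encodes --- and provided the residue-kernel description of $\ker(\partial_1)$ in \S\ref{sect:end} is matched with the fact that classes pulled back from a smooth proper model are unramified.

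An alternative organization I would consider, probably cleaner to write, is a Yoneda reduction in the spirit of Lemma~\ref{lem:yoneda}. The target $T\mapsto\bigoplus_{i=1,2}\Hom\bigl(H^i_{\ur}(S),H^i_{\ur}(T)\bigr)$ is normalized, birational and motivic --- as $H^i_{\ur}$ is (Proposition~\ref{prop:unram-coh}) and $\Hom(A,-)$ preserves these properties for the fixed finite abelian group $A$ --- hence factors through $\Chow^\nor_\Lambda$ by Lemma~\ref{lem:bir-funct-factor}, while the source $T\mapsto\CH_0(S_{k(T)})_{\Tor,\Lambda}=\Chow^\nor_\Lambda(T,S)$ of \eqref{eq:CH0-S} is represented by $S$. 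Once one checks that $\Phi$ is natural in $T$ --- which follows from the functoriality in $T$ of each ingredient above --- the additive Yoneda lemma reduces the claim to the single identity $\Phi(\mathrm{id}_S)=(\mathrm{id}_{H^1_{\ur}(S)},\mathrm{id}_{H^2_{\ur}(S)})$, where $\mathrm{id}_S\in\CH_0(S_{k(S)})_{\Tor,\Lambda}$ is the class of the generic point. Under the chain of identifications this class corresponds to the cycle class of the diagonal of $M$, whose K\"unneth components, by the Poincar\'e self-duality $M\cong M^\vee(2)$ of Proposition~\ref{prop:GG-V}, are the identity endomorphisms; the desired identity follows.

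The hard part, in either organization, will be the bookkeeping of the duality and Tate-twist identifications, and specifically verifying that the action of $\gamma$ on $H^i_{\ur}$ computed abstractly through the category $\Chow^\nor_\Lambda$ agrees with the concrete ``contraction against a K\"unneth component of the cycle class'' on the subquotient $H^i_{\ur}$ of $H^i_{\et}$. This relies on the naturality for correspondences of the comparison maps $\tau$ and $\rho$ and of the coniveau/Bockstein spectral sequence of Theorem~\ref{thm:TY}, available from the cycle-module formalism of \cite{R,KS1,TY}, and is a little delicate because the intermediate groups $H^4_{\et}(M_{k(T)})$ are not themselves birational invariants of $T$. For this reason I would favour the Yoneda route, where in the end only the K\"unneth components of the diagonal of $M$ need to be identified.
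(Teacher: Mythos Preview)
Your first organization is essentially the paper's proof. The paper works with $S_K$ rather than $M_K$, but the core is the same: the key step is the explicit verification that for $\xi\in H^4_\et(S_K)$ the correspondence action $a\mapsto\pr_{2*}(\pr_1^*a\cup\xi)$ coincides with $\pd\circ\ku^{-1}$, done via the projection formula and the proper base change of \cite[Exp.~XVIII, Th.~2.9]{SGA4}; this is exactly what you summarize as ``contraction against the K\"unneth component through the Poincar\'e pairing''. The paper then concludes by the commutative diagram
\[
\xymatrix{
\CH_0(S_K)_{\Tor,\Lambda}\ar[r]^-{\cyc}\ar[rrd]_-{\Phi}&
H^4_\et(S_K)\ar[r]^-{\pd\circ\ku^{-1}=\co}&
\bigoplus_i\Hom(H^i_\et(S),H^i_\Gal(K))\\
&&\bigoplus_i\Hom(H^i_\ur(S),H^i_\ur(T))\ar@{_{(}->}[u]_\Pi
}
\]
with $\Pi$ built from the split surjections $\pi_i$ of \eqref{eq:def-q-pi}; Lemma~\ref{lem:pi-q} enters exactly where you predict, to match the two pairings. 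One small point: the paper avoids invoking naturality of $\rho$ for correspondences by instead using the split injection $\Pi$ at the \'etale level, which sidesteps the ``subquotient'' issue you flag.

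Your Yoneda alternative is a genuinely different route the paper does not take. It trades the explicit cohomological bookkeeping for a single computation at $T=S$, $\gamma=\id_S$, at the cost of first checking naturality of $\Phi$ in $T$ as a morphism of functors on $\Chow^\nor_\Lambda$. That naturality is not entirely formal---$\Phi$ was constructed through the intermediate groups $H^4_\et(M_{k(T)})$ which are not birational in $T$---so in practice you would still need most of the compatibilities from the first approach to establish it. The payoff is conceptual clarity; the paper's direct computation is shorter to write down.
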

\begin{proof}
Put $K:=k(T)$.
We use a cartesian diagram
\[
\xymatrix{
S_K \ar[r]^-{\pr_2} \ar[d]_-{\pr_1} &
\Spec K \ar[d]^-{s_2} 
\\
S \ar[r]^-{s_1} &
\Spec k,
}
\]
where $\pr_i$ are the projections 
and $s_i$ are the structure maps.
We first show,
by a standard argument,
the commutativity of the diagram
\begin{equation}\label{eq:comm-ku-pd}
\xymatrix{
H^4_\et(S_K) \ar[r]^-\co  &
\bigoplus_i \Hom(H^i_\et(S), H^i_\Gal(K))
\\
&
\bigoplus_i H_\et^{4-i}(S) \otimes H_\Gal^i(K),
\ar[lu]^-\ku_(0.4)\cong \ar[u]_-\pd^-\cong
}
\end{equation}
where $\co$ is the correspondence action
(that is,
$\co(\xi)(a)=\pr_{2*}(\pr_1^*(a) \cup \xi)$),
$\ku$ is the K\"unneth isomorphism,
and
$\pd$ is the isomorphism from Lemma \ref{lem:tensor-hom}.
We take
$a \in H^i_\et(S)$, 
$b \in H^{4-i}_\et(S)$ and
$x \in H^i_\Gal(K)$, 
and compute
\begin{align*}
(\co \circ \ku)(b \otimes x)(a)
&= \pr_{2*}(\pr_1^*(a) \cup \pr_1^*(b) \cup \pr_2^*(x))
\\
&= \pr_{2*}(\pr_1^*(a \cup b) \cup \pr_2^*(x))
\overset{(1)}{=} \pr_{2*}(\pr_1^*(a \cup b)) \cup x
\\
&
\overset{(2)}{=} s_2^* s_{1*}(a \cup b) \cup x
=\pd(b \otimes x)(a).
\end{align*}
Here we have used the projection formula for \'etale cohomology
and the base change property in \cite[Expos\'e XVIII, Th\'eor\`eme 2.9]{SGA4} at $(1)$ and $(2)$, respectively.
We have shown the commutativity of \eqref{eq:comm-ku-pd}.

We now consider the following diagram
\[
\xymatrix{
\CH_0(S_K)_{\Tor, \Lambda} \ar[r]^-\cyc \ar[rrd]_-{(**)}&
H^4_\et(S_K) \ar[r]^-{(*)} &
\bigoplus_i \Hom(H^i_\et(S), H^i_\Gal(K))
\\
& & \bigoplus_i \Hom(H^i_\ur(S), H^i_\ur(T)). \ar@{_{(}-{>}}[u]_{\Pi}
}
\]
Here $\cyc$ is the cycle map,
and $\Pi$ is the direct sum of the compositions
\[
\Hom(H^i_\ur(S), H^i_\ur(T)) 
\hookrightarrow
\Hom(H^i_\ur(S), H^i_\Gal(K)) 
\overset{\pi_i^*}{\hookrightarrow}
\Hom(H^i_\et(S), H^i_\Gal(K)),
\]
where $\pi_i^*$ is induced by $\pi_i$ in \eqref{eq:def-q-pi}
(which is split surjective).
If we set $\pd \circ \ku^{-1}$ at $(*)$ and  $\Phi$ at $(**)$,
then the diagram commutes by Theorem \ref{thm:TY} (3) and Lemma \ref{lem:pi-q}.
On the other hand,
if we set $\co$ at $(*)$ and the induced map by $H^*_\ur$ at $(**)$,
then the diagram commutes by definition.
Hence the assertion follows from the commutativity of \eqref{eq:comm-ku-pd}.
%
%
%
%
%
%
%
%
%
%
%
\end{proof}

\begin{example}\label{ex:Beauville}
Let $S$ be an Enriques surface over $\C$
(so that $S$ admits a decomposition of the diagonal 
by \cite{BKL} and Remark \ref{rem:div-nor-eff} (1)).
Let $f : T \to S$ be its universal cover
so that $\deg(f)=2$ and $T$ is a $K3$ surface.
In \cite[Corollary 5.7]{B},
Beauville showed that
$H^2_\ur(f)$ vanishes if and only if 
there exists $L \in \Pic(T)$ such that
$\sigma(L)=L^{-1}$ and
$c_1(L)^2 \equiv 2 \bmod 4$,
where $\sigma \in \Gal(f)$ is the non-trivial element.
Moreover, it is shown that
all the $S$ satisfying those conditions
form an infinite countable union of hypersurfaces 
in the moduli space of Enriques surfaces \cite[Corollary 6.5]{B}.
Explicit examples of $S$ satisfying those conditions
can be found in \cite{GS, HS}.
As $H^1_\ur(f)=0$ by definition,
Theorem \ref{thm:main1-full} shows that
this condition implies $F(f)=0$ 
for any normalized, birational, and motivic functor $F$.
\end{example}

\begin{example}\label{ex:Bruno}
Let us apply Theorem \ref{thm:main1-full} 
to $T=S$ and $f=m \cdot \id_S$ with $m \in \Z_{>0}$.
The minimal $m$ which satisfies
the condition (3)
is nothing other than
the torsion order $\Tor_\Lambda^\nor(S)$
in the sense of Definition \ref{def:tor-ord}.
Thus Theorem \ref{thm:main1-full} 
(together with \eqref{eq:etcoh13-fincoef})
recovers 
a main result of \cite[Corollary 6.4 (b)]{K1},
which says $\Tor_\Lambda^\nor(S)
=\exp(\NS(S)_{\Lambda, \Tor})$.
\end{example}

Theorem \ref{thm:main1-full} suggests
the following problem.

\begin{problem}\label{prob:faithful}
Is the functor $H^*_\ur$,
viewed as a functor
from the full subcategory of 
torsion objects in $\Chow^\nor_\Lambda$
to $\lMod$,
faithful?
(Compare \cite[Question 3.5]{K1}.)
\end{problem}

\subsection{Explicit computation of the Chow group and unramified cohomology}

\begin{theorem}\label{thm:main3-full} 
Suppose the characteristic of $k$ is zero.
Let $S \in \SmProj$ be a surface admitting
a decomposition of the diagonal.
If $H^1_\ur(S)$ is a cyclic group of prime order $\ell$,
then so are 
$\CH_0(S_{k(S)})_{\Tor, \Lambda}$ 
and $H^3_\ur(S \times S)$.
\end{theorem}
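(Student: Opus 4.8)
The plan is to deduce Theorem~\ref{thm:main3-full} from the exact sequence \eqref{eq:Vishik-ex-seq-full} of Theorem~\ref{thm:main2-full} applied with $T=S$. By the Remark following Theorem~\ref{thm:main1} (i.e. \eqref{eq:etcoh13-fincoef} and Lemma~\ref{lem:coh-surf}~(3)), the hypothesis that $H^1_\ur(S)\cong N_S$ is cyclic of prime order $\ell$ forces, by the perfect duality between $N_S$ and $B_S=\Br(S)_\Lambda$, that $H^2_\ur(S)\cong B_S$ is also cyclic of order $\ell$. Hence both $H^1_\ur(S)$ and $H^2_\ur(S)$ are isomorphic to $\Z/\ell\Z$ as abelian groups.

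Next I would compute the middle term of \eqref{eq:Vishik-ex-seq-full}. Since $H^i_\ur(S)\cong\Z/\ell\Z$ for $i=1,2$, the group $\Hom(H^i_\ur(S),H^i_\ur(S))$ is cyclic of order $\ell$ for each $i$, so
\[
\bigoplus_{i=1,2}\Hom(H^i_\ur(S),H^i_\ur(S))\cong(\Z/\ell\Z)^2,
\]
a group of order $\ell^2$. The exact sequence \eqref{eq:Vishik-ex-seq-full} then reads
\[
0\to\CH_0(S_{k(S)})_{\Tor,\Lambda}\to(\Z/\ell\Z)^2\to H^3_\ur(S\times S)\to 0,
\]
so it suffices to pin down the order of one of the two outer terms; the other is then determined by multiplicativity of orders in a short exact sequence.

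For this I would show that $\CH_0(S_{k(S)})_{\Tor,\Lambda}$ is nonzero and that $H^3_\ur(S\times S)$ is nonzero, which together with the order count $\ell^2$ forces each to have order exactly $\ell$. Nonvanishing of $\CH_0(S_{k(S)})_{\Tor,\Lambda}$: under \eqref{eq:CH0S-CHtor} this group equals $\Chow^\nor_\Lambda(S,S)$, which by Proposition~\ref{prop:GG-V2}~(2) is torsion of torsion order $\Tor_\Lambda^\nor(S)=\exp(\NS(S)_{\Tor,\Lambda})=\exp(N_S)=\ell>1$; in particular it is a nonzero group annihilated by $\ell$, hence nonzero. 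Nonvanishing of $H^3_\ur(S\times S)$: this is the cokernel in \eqref{eq:Vishik-ex-seq-full}, and since $\CH_0(S_{k(S)})_{\Tor,\Lambda}$ is annihilated by $\ell$ while $(\Z/\ell\Z)^2$ has order $\ell^2$, the image of $\Phi$ has order dividing $\ell$, so the quotient has order at least $\ell$; being a subquotient of $(\Z/\ell\Z)^2$ it is a nonzero $\ell$-group. Combining, $|\CH_0(S_{k(S)})_{\Tor,\Lambda}|$ divides $\ell$ and is $>1$, hence equals $\ell$, and then $|H^3_\ur(S\times S)|=\ell^2/\ell=\ell$. Since $\ell$ is prime, a group of order $\ell$ is automatically cyclic, giving the claim.

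I expect the main subtlety to be the bookkeeping around coefficients: one must be sure that $N_S=\NS(S)_{\Tor,\Lambda}$ really is cyclic of order $\ell$ (not merely of exponent $\ell$), which is where the hypothesis is phrased on $H^1_\ur(S)$ directly via the identification $H^1_\ur(S)\cong N_S$, so this is immediate; and that the duality $N_S\cong\Hom(B_S,\Q/\Z)$ of Lemma~\ref{lem:coh-surf}~(3) transfers cyclicity and order to $B_S$. The characteristic-zero hypothesis enters only to ensure $\Lambda=\Z[1/p]=\Z$ so that no prime is inverted and $\ell$-torsion is genuinely seen; otherwise the argument is a purely formal consequence of Theorems~\ref{thm:main2-full} and Proposition~\ref{prop:GG-V2}, with no further geometric input needed.
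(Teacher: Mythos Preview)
Your argument has a genuine gap at the crucial step. You correctly obtain the exact sequence
\[
0\to\CH_0(S_{k(S)})_{\Tor,\Lambda}\to(\Z/\ell\Z)^2\to H^3_\ur(S\times S)\to 0
\]
and correctly show that the left-hand term is nonzero (since $\Tor_\Lambda^\nor(S)=\ell$). But the inference ``since $\CH_0(S_{k(S)})_{\Tor,\Lambda}$ is annihilated by $\ell$ \dots\ the image of $\Phi$ has order dividing $\ell$'' is false: being annihilated by $\ell$ constrains the \emph{exponent}, not the \emph{order}. Every subgroup of $(\Z/\ell\Z)^2$ is annihilated by $\ell$, including $(\Z/\ell\Z)^2$ itself. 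So nothing you have written excludes the possibility that $\Phi$ is an isomorphism, i.e.\ that $\CH_0(S_{k(S)})_{\Tor,\Lambda}\cong(\Z/\ell\Z)^2$ and $H^3_\ur(S\times S)=0$.

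Ruling this out is exactly where the paper invokes a nontrivial input: Vishik's theorem that any nonzero torsion direct summand $N$ of the motive of a surface with $\ell\cdot\id_N=0$ has $\Pic(N)\neq 0$. Using Proposition~\ref{prop:GG-V2}(1) to identify $\CH_0(S_{k(S)})_{\Tor,\Lambda}\cong\Chow^\eff_\Lambda(M,M)$, if $\Phi$ were surjective it would be a ring isomorphism onto $\End(\Z/\ell\Z)\times\End(\Z/\ell\Z)$, and the idempotent $(0,1)$ would lift to a projector $\pi$ on $M$ whose image $N$ satisfies $\Pic(N)=0$, $\Br(N)\cong\Z/\ell\Z$, contradicting Vishik. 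This is also where the characteristic-zero hypothesis is actually used (Vishik's result is stated over characteristic zero), not merely to make $\Lambda=\Z$ as you suggest; your argument as written uses no geometric input beyond Theorem~\ref{thm:main2-full} and Proposition~\ref{prop:GG-V2}, and such a purely formal argument cannot succeed here.
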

\begin{proof}
Let $M \in \Chow^\eff_\Lambda$ 
be the Chow motive constructed in Proposition \ref{prop:GG-V}.
Since 
$\CH_0(S_{k(S)})_{\Tor, \Lambda} 
= \Chow^\nor_\Lambda(S, S) = \Chow^\nor_\Lambda(M, M)$,
Proposition \ref{prop:GG-V2} (1) 
and \eqref{eq:Vishik-ex-seq-full} 
yields an exact sequence
\begin{equation}\label{eq:Vishik-ex-seq-full2}
0 \to \Chow_\Lambda^\eff(M, M) 
\overset{\Phi}{\to} 
\bigoplus_{i=1, 2} \Hom(H_\ur^i(S), H_\ur^i(S)) 
\to H^3_\ur(S \times S) \to 0.
\end{equation}
We know $\id_M \in \Chow^\eff_\Lambda(S, S)$ has order $\ell$
by Proposition \ref{prop:GG-V2} (2).
Thus it suffices to show $\Phi$ is not surjective.
If it were surjective,
then by \eqref{eq:Vishik-ex-seq-full2}
there should be a projector $\pi : M \to M$ in $\Chow^\eff_\Lambda$
such that $N:=\Im(\pi) \subset M$ satisfies 
$\Pic(N)=0$ and $\Br(N) \cong \Z/\ell\Z$,
but this would contradict the following result of Vishik.
\end{proof}

\begin{theorem}[Vishik]
Suppose that $k$ is of characteristic zero,
and let $N \in \Chow^\eff_\Lambda$ be a non-trivial 
direct summand of a motive of a surface
such that $\ell \cdot \id_N=0$ for some prime number $\ell$.
Then we have $\Pic(N) \not= 0$.
\end{theorem}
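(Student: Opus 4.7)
The plan is to argue by contradiction. Write $N=(S,\pi,0)$ with $\pi\in\CH_2(S\times S)_\Lambda$ a projector, assume $\Pic(N)=0$, and aim to show $\pi=0$ (contradicting non-triviality of $N$). First I would reformulate the two hypotheses in terms of $\pi$. Because $\pi^2=\pi$, the equation $\ell\cdot\id_N=0$ in $\Chow^\eff_\Lambda(N,N)$ is equivalent to $\ell\pi=0$ in $\CH_2(S\times S)_\Lambda$. On the other hand, under the covariant convention, $\CH_1(N)_\Lambda=\pi_*\CH_1(S)_\Lambda$, so $\Pic(N)=0$ means the correspondence $\pi$ annihilates all divisor classes of $S$. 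Thus $\pi$ is an $\ell$-torsion projector acting as zero on $\Pic(S)_\Lambda$.

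Next I would transport the problem to étale cohomology. As recalled in the proof of Proposition~\ref{prop:GG-V2}(1), Bloch's map gives an injection
\[
\CH_2(S\times S)_\Lambda[\ell]\hookrightarrow H^3_\et(S\times S,\mu_\ell^{\otimes 2}),
\]
so it suffices to show that the cycle class of $\pi$ vanishes in the right-hand side. Applying the K\"unneth formula decomposes this group into summands of the form $H^i_\et(S,\mu_\ell^{\otimes a})\otimes H^j_\et(S,\mu_\ell^{\otimes b})$ (and $\Tor$ contributions) with $i+j=3$. Using the cycle class embedding $\NS(S)_\Lambda/\ell\hookrightarrow H^2_\et(S,\mu_\ell)$, the hypothesis $\pi_*\CH_1(S)_\Lambda=0$ forces the K\"unneth components of $\pi$ involving the algebraic (divisor) part of $H^2_\et(S,\mu_\ell)$ to vanish.

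Then I would use the projector identity $\pi\circ\pi=\pi$ together with the Poincar\'e duality of Lemma~\ref{lem:coh-surf} to show the remaining components vanish. Characteristic zero enters crucially here through Murre's Chow--K\"unneth decomposition for surfaces and through the Lefschetz $(1,1)$ theorem, which together cleanly separate the ``algebraic'' part of $H^2(S)$ from the ``transcendental'' one, so that a torsion projector acting trivially on divisors is confined to a very restricted part of the K\"unneth decomposition.

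The main obstacle is this last step: making precise that an $\ell$-torsion projector with the above support restrictions must be zero. This is the substance of Vishik's argument: one composes two copies of $\pi$ in étale cohomology, observes that the surviving K\"unneth components of $\pi$ (those dual, via Poincar\'e duality, to the ones already killed) cannot reproduce the class of $\pi$ itself under $\pi\circ\pi$ once $\ell\pi=0$ is imposed, and concludes that $\pi=0$. The delicate point is tracking the intersection/pushforward formulas through the K\"unneth decomposition and exploiting that in characteristic zero the transcendental cohomology carries a pure weight two Hodge structure, so no ``unwanted'' divisor-like torsion classes can intervene.
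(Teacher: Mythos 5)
The paper does not prove this statement: it is quoted verbatim from Vishik and the ``proof'' is the citation \cite[Corollary 4.22]{Vishik}. So the real question is whether your sketch actually establishes the result, and it does not. You yourself flag the decisive step --- ``making precise that an $\ell$-torsion projector with the above support restrictions must be zero... This is the substance of Vishik's argument'' --- and that step is exactly where the theorem lives; deferring it leaves the proof with no content beyond the (correct, but routine) reductions in your first two paragraphs.

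Worse, the strategy as stated cannot be completed in the form you propose. Knowing $\pi_*\Pic(S)_\Lambda=0$ kills only the K\"unneth components of the class of $\pi$ that involve the image of $\NS(S)$ in $H^2_\et(S,\mu_\ell)$; it says nothing about the components living in $\Hom(H^3_\et(S,\Z_\ell)_\Tor,H^3_\et(S,\Z_\ell)_\Tor)$, i.e.\ the ``Brauer'' part. Concretely, a hypothetical summand $N$ with $\Pic(N)=0$ but $\Br(N)\cong\Z/\ell\Z$ (precisely the object that the proof of Theorem \ref{thm:main3-full} needs Vishik's theorem to exclude) would have non-vanishing \'etale realization $H^3_\et(N,\mu_\ell)\neq 0$, hence a non-vanishing cycle class for $\pi$; so ``it suffices to show the cycle class of $\pi$ vanishes'' is not a reduction one can make a priori, and the claim that $\pi\circ\pi=\pi$ together with Poincar\'e duality forces the remaining components to die is unsubstantiated. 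The appeal to the pure weight-two Hodge structure on the transcendental lattice is also beside the point: $\ell\pi=0$ means $\pi$ acts as zero on all of $H^*(S,\Q_\ell)$ automatically, and the entire problem is concentrated in torsion and $\Tor$-terms, which rational Hodge theory does not see. Excluding the Brauer-type components is what requires Vishik's analysis of the motivic (not merely \'etale) cohomology of torsion summands --- the Bockstein/coniveau computations of which \S\ref{sect:vishik} of this paper is an adaptation --- and that argument is absent from your proposal.
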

\begin{proof}
See \cite[Corollary 4.22]{Vishik}.
\end{proof}

\begin{remark}
The assumption on the characteristic is used
only to invoke Vishik's result.
It is likely to hold in any characteristic,
as long as $\ell$ is invertible in $k$.
\end{remark}

\begin{corollary}\label{cor:integHC}
In Theorem \ref{thm:main3-full}, suppose further that $k=\C$.
Then we have
\[
\Coker(\CH^2(S \times S) \to 
H^4(S \times S(\C), \Z(2)) \cap H^{2, 2}(S \times S))
\cong \Z/\ell \Z.
\]
In particular, $S \times S$ violates 
the integral Hodge conjecture in codimension two.
\end{corollary}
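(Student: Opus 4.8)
The plan is to derive the corollary from Theorem~\ref{thm:main3-full} via the theorem of Colliot-Th\'el\`ene and Voisin \cite{CTV}, which identifies the degree-three unramified cohomology $H^3_\ur(-,\Q/\Z(2))$ of a smooth projective complex variety with the defect of the integral Hodge conjecture for codimension-two cycles, up to a divisible subgroup. Set $X:=S\times S$ and write $C:=\Coker\bigl(\CH^2(X)\to H^4(X(\C),\Z(2))\cap H^{2,2}(X)\bigr)$ for the cokernel appearing in the statement. Since $S$ admits a decomposition of the diagonal we have $b_1(S)=b_3(S)=0$, $H^0(S,\Omega^1_{S/k})=H^0(S,\Omega^2_{S/k})=0$, and $b_2(S)=\rho(S)$ (Lemma~\ref{lem:coh-surf}, Proposition~\ref{prop:b1-b2-rho}), so $H^1(S,\C)=H^3(S,\C)=0$, the Hodge structure on $H^2(S,\C)$ is pure of type $(1,1)$, and $H^2(S,\Q)=\NS(S)_\Q$. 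By the K\"unneth formula $H^4(X,\C)$ is then pure of Hodge type $(2,2)$, so that $H^4(X(\C),\Z(2))\cap H^{2,2}(X)=H^4(X(\C),\Z(2))$, and $H^4(X,\Q(2))$ is spanned by the cycle classes $[S\times\{x\}]$, $[\{x\}\times S]$ and $[D\times D']$ for a point $x\in S$ and divisors $D,D'$ on $S$. In particular every degree-four rational Hodge class on $X$ is algebraic, and hence $C$ is a finite group.

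Next I would invoke \cite{CTV}: for a smooth projective complex variety on which the rational Hodge conjecture holds in degree four, $H^3_\ur(-,\Q/\Z(2))$ is an extension of the (finite) defect group of the integral Hodge conjecture for codimension-two cycles by a divisible group; applied to $X$, this says that $H^3_\ur(X,\Q/\Z(2))$ is an extension of $C$ by a divisible group. Theorem~\ref{thm:main3-full} gives $H^3_\ur(X)\cong\Z/\ell\Z$ --- the unramified cohomology of the excerpt being precisely $H^3_\ur(-,\Q/\Z(2))$ when $k=\C$ --- which is finite and therefore has trivial divisible subgroup. Consequently $C\cong H^3_\ur(X)\cong\Z/\ell\Z$, which is the asserted isomorphism; and since $C\neq0$ the cycle class map $\CH^2(X)\to H^4(X(\C),\Z(2))\cap H^{2,2}(X)$ fails to be surjective, i.e.\ $S\times S$ violates the integral Hodge conjecture in codimension two.

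All the genuine content lies in Theorem~\ref{thm:main3-full}; what remains is to route its output correctly through the Colliot-Th\'el\`ene--Voisin machinery. The only point needing care is the verification of the hypotheses of \cite{CTV} --- which, as explained above, reduces to the rational Hodge conjecture for $X$ in degree four, an elementary consequence of the decomposition of the diagonal --- together with the observation that the potentially infinite divisible part of $H^3_\ur(X)$ vanishes here precisely because Theorem~\ref{thm:main3-full} already exhibits $H^3_\ur(X)$ as a finite group.
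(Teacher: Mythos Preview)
Your argument is correct and follows the same overall strategy as the paper---reduce to Theorem~\ref{thm:main3-full} via Colliot-Th\'el\`ene--Voisin---but you and the paper verify \emph{different} hypotheses of the CTV machinery. The paper invokes the version of \cite{CTV} stated immediately after the corollary (Th\'eor\`eme~3.9 there), whose hypothesis is that $\CH_0(X)$ is supported on a surface; it checks this by showing $\CH_0(S\times S)\cong\Z$ using the decomposition $h^\eff(S)\cong\Lambda\oplus\Lambda(1)^\rho\oplus\Lambda(2)\oplus M$ together with Roitman's theorem. That version then yields the isomorphism $H^3_\ur(X)\cong C$ directly, with no divisible error term. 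You instead use the more general form of \cite{CTV}, whose hypothesis is the degree-four rational Hodge conjecture for $X$, and you verify it by K\"unneth and the Hodge type of $H^2(S)$; this only identifies $C$ with $H^3_\ur(X)$ modulo its maximal divisible subgroup, and you then dispose of the divisible part using the finiteness supplied by Theorem~\ref{thm:main3-full}. Your route is slightly more elementary (no Roitman, no appeal to the torsion motive $M$), at the cost of the extra step handling the divisible subgroup; the paper's route is cleaner once $\CH_0(X)\cong\Z$ is in hand.
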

\begin{proof}
Set $X:=S \times S$.
We claim that $\CH_0(X) \cong \Z$.
For this, it suffices to show that
$\ker(\CH_0(X) \to \Z)$ is torsion by Roitman's theorem,
but Proposition \ref{prop:GG-V} implies that
\[
\ker(\CH_0(X) \to \Z) \cong \Chow^\eff_\Lambda(\Lambda(0), M \otimes M),
\]
which is obviously killed by $\ell$.
Now the corollary is a consequence of
Theorem \ref{thm:main3-full}
and the following result 
of Colliot-Th\'el\`ene and Voisin \cite{CTV}.
\end{proof}

\begin{theorem}[Colliot-Th\'el\`ene, Voisin]\label{thm:CTV}
Suppose $k=\C$ and let $X \in \SmProj$.
Assume that there exist $Y \in \SmProj$ and a morphism $f : Y \to X$ 
such that $\dim Y=2$ and 
$f_* : \CH_0(Y) \to \CH_0(X)$ is surjective.
Then we have an isomorphism of finite abelian groups
\[
H^3_\ur(X)
\cong 
\Coker(\CH^2(X) \to 
H^4(X(\C), \Z(2)) \cap H^{2, 2}(X)).
\]
\end{theorem}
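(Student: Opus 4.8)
The plan is to follow Colliot-Th\'el\`ene and Voisin: turn the hypothesis into a decomposition of the diagonal of $X$, and then read the statement off the Bloch--Ogus coniveau spectral sequence.

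First I would extract, by the Bloch--Srinivas spreading-out argument applied to the surjection $f_\ast\colon\CH_0(Y)\to\CH_0(X)$, a decomposition $N\,[\Delta_X]=[Z_1]+[Z_2]$ in $\CH_{\dim X}(X\times X)$ for some $N\in\Z_{>0}$, where $Z_1=(f\times\id)_\ast Z_1'$ with $Z_1'\in\CH(Y\times X)$ (so $Z_1^\ast$ factors through $H^\ast(Y)$, with $Y$ a smooth projective surface) and $Z_2$ is supported on $X\times W$ for a proper closed subset $W\subsetneq X$. This has two consequences. (a) Acting by correspondences on rational cohomology, the image of $Z_1^\ast$ in $H^4(X,\Q(2))$ is spanned by a class of a codimension-two cycle, while $Z_2^\ast$ raises the coniveau filtration; a short chase with $N\cdot\id=Z_1^\ast+Z_2^\ast$ then gives $H^4(X,\Q(2))=N^2H^4(X,\Q(2))$, where $N^\bullet$ denotes the coniveau filtration. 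Hence the rational Hodge conjecture holds for $X$ in codimension two, $H^4(X,\Z(2))\cap H^{2,2}(X)=H^4(X,\Z(2))$, and, by the Bloch--Ogus description of the image of the cycle class map, $\Coker\bigl(\CH^2(X)\to H^4(X,\Z(2))\cap H^{2,2}(X)\bigr)=H^4(X,\Z(2))/N^2H^4(X,\Z(2))$, a finite group. (b) Acting on $H^i_\ur$ for $i\geq3$, both $Z_1^\ast$ and $Z_2^\ast$ vanish: $Z_1^\ast$ factors through $H^i_\ur(Y)$, which is $0$ since $\C(Y)$ has cohomological dimension $2$; and $Z_2^\ast(\xi)$ restricts to $0$ at the generic point of $X$, hence equals $0$, because $Z_2$ lies over $W\subsetneq X$ and $\sH^i$ embeds in its stalk at the generic point (Gersten resolution; recall $H^i_\ur$ is motivic by Proposition \ref{prop:unram-coh}). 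So $H^3_\ur(X)$ is killed by $N$.

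The core of the proof is the Bloch--Ogus coniveau spectral sequence $E_2^{p,q}=H^p_\Zar(X,\sH^q)\Rightarrow H^{p+q}$, applied to the Zariski sheaves $\sH^q$ of \'etale $\Q/\Z(2)$-cohomology and of Betti $\Z(2)$- and $\Q(2)$-cohomology over $\C$, together with the Bloch--Ogus vanishing $E_2^{p,q}=0$ for $p>q$, Bloch's formula $H^2_\Zar(X,\sH^2(\Z(2)))=\CH^2(X)$, and the Beilinson--Lichtenbaum comparison in the range $a\leq b$ already used in \S\ref{sect:vishik} (part of the norm residue theorem). In total degrees $3$ and $4$ this spectral sequence is almost degenerate; unwinding the coniveau filtration on $H^4(X,\Z(2))$ yields a natural exact sequence
\[
0\to H^1_\Zar(X,\sH^3(\Z(2)))\to H^4(X,\Z(2))/N^2H^4(X,\Z(2))\to H^4(X,\Z(2))/N^1H^4(X,\Z(2))\to0,
\]
with the left term the graded piece $N^1/N^2$ read off from the edge maps. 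Next, the coefficient triangle $0\to\Z(2)\to\Q(2)\to\Q/\Z(2)\to0$, applied to the sheaves $\sH^\bullet$ and then to $H^\bullet_\Zar(X,-)$, identifies $H^1_\Zar(X,\sH^3(\Z(2)))$ with $H^0_\Zar(X,\sH^3(\Q/\Z(2)))=H^3_\ur(X)$, as soon as one knows that $H^0_\Zar(X,\sH^3(\Q(2)))$ and $H^1_\Zar(X,\sH^3(\Q(2)))$ vanish.

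The remaining inputs are precisely the vanishings $H^4(X,\Z(2))/N^1H^4(X,\Z(2))=0$, $H^i_\ur(X)\otimes\Q=0$ for $i\geq3$, and $H^0_\Zar(X,\sH^3(\Q(2)))=H^1_\Zar(X,\sH^3(\Q(2)))=0$; each follows by feeding the diagonal decomposition of step (b) into the relevant group, the $Y$-contribution dying for dimension reasons (cohomology above $2\dim Y$, or unramified cohomology of a surface) and the $W$-contribution dying because it is supported away from the generic point, or passes through a Gysin map that strictly raises the coniveau. The main obstacle is exactly this integral bookkeeping: one must control torsion versus free parts along the coniveau filtration, check that each $Z_i^\ast$ acts on all the Bloch--Ogus groups in play compatibly with restriction to the generic point and with Gysin pushforwards, and pin down the edge homomorphisms so that $N^2H^4(X,\Z(2))$ is exactly the integral image of the cycle class map; the rational statements are soft, but these integral refinements are where the work lies. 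Granting them, the two exact sequences collapse to a natural isomorphism $H^3_\ur(X)\cong\Coker\bigl(\CH^2(X)\to H^4(X,\Z(2))\cap H^{2,2}(X)\bigr)$, and, by (a)--(b), it is an isomorphism of finite abelian groups.
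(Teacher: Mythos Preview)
The paper does not prove this statement at all: its ``proof'' is a one-line citation of \cite[Th\'eor\`eme 3.9]{CTV}. What you have written is an outline of the actual Colliot-Th\'el\`ene--Voisin argument, so in that sense you have done more than the paper, and your sketch is faithful to their method: Bloch--Srinivas decomposition of the diagonal, action of the two pieces on cohomology and on unramified cohomology, and the Bloch--Ogus coniveau spectral sequence together with the coefficient triangle $\Z\to\Q\to\Q/\Z$ to identify $H^1_\Zar(X,\sH^3(\Z(2)))$ with $H^3_\ur(X)$.

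Two places where your sketch would need tightening before it is a proof. First, the assertion that the image of the integral cycle class map is \emph{exactly} $N^2H^4(X,\Z(2))$ is the delicate integral point; the Bloch--Ogus identification $E_2^{2,2}=\CH^2(X)$ and the edge map give this, but you should say explicitly that the differentials out of $E_2^{2,2}$ vanish (they land in $E_2^{p,q}$ with $p>q$). Second, in step (b) the vanishing of $Z_2^\ast$ on $H^i_\ur$ is not literally because ``$Z_2^\ast(\xi)$ restricts to $0$ at the generic point''; rather, the correspondence $Z_2$ is supported on $X\times W$ with $W\subsetneq X$, hence its action on the birational motive $h^\bir(X)$ is zero (it factors through the image of $h^\bir(W)\to h^\bir(X)$ via a proper pushforward from a desingularisation of $W$, which kills unramified cohomology in the relevant degree). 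Your parenthetical about $\sH^i$ embedding in its generic stalk is the right ingredient, but the direction of the correspondence action needs to be tracked. These are exactly the ``integral bookkeeping'' issues you flag at the end; once they are pinned down your outline matches \cite{CTV}.
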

\begin{proof}
See \cite[Th\'eor\`eme 3.9]{CTV}.
\end{proof}

\begin{example}\label{ex:chow-computation}
\begin{enumerate}
\item 
By applying
Theorem \ref{thm:main3-full}
to an Enriques surface $S$,
we find that
$\CH_0(S_{k(S)})_\Tor$ is of order two.
This answers a question raised by
Kahn \cite[p. 840, footnote]{K1}
(in case of characteristic zero).
\item 
Similarly, 
we may apply Theorem \ref{thm:main3-full}
to a Godeaux surface $S$ over $\C,$
as long as Bloch's conjecture holds for $S$
(see Remark \ref{rem:div-nor-eff}).
This is previously known for the classical Godeaux surface
by Vishik (see a remark after Proposition 4.6 in \cite{Vishik}).
Other Godeaux surfaces
for which Bloch's conjecture is verified 
can be found in \cite{GP, Voisin}.
\end{enumerate}
\end{example}

\begin{problem}
Does the equality
\[ |\CH_0(S_{k(S)})_{\Tor}|=|H^3_\ur(S \times S)| \]
remain valid when $H^1_\ur(S)  \cong \NS(S)_{\Tor, \Lambda}$ 
is not cyclic of prime order,
e.g. for a Beauville surface (see \cite{GS2})
or for a Burniat surface (see \cite{AO}) over $\C$?
Note that
Bloch's conjecture is known for such surfaces,
and
we have 
$H^1_\ur(S) \cong \Z/5\Z \times \Z/5\Z$
or 
$H^1_\ur(S) \cong \Z/2\Z \times \Z/2\Z \times \Z/2\Z$,
respectively.
\end{problem}

\section{Appendix : elementary homological algebra}

In this section we prove some elementary lemmas
that have been used in the body of this paper.

\begin{lemma}\label{lem:homalg0}
\begin{enumerate}
\item 
Let $A, B$ be abelian groups.
Suppose 
that $A$ is finitely generated
and that $B$ is a free $\Z$-module.
Then the canonical map
\[
\Hom(A, \Q/\Z) \otimes B \to \Hom(A, B \otimes \Q/\Z),
\qquad
\chi \otimes b \mapsto [a \mapsto b \otimes \chi(a)]
\]
is an isomorphism.
\item 
Let $m \in \Z_{>0}$
and let $A, B$ be $\Z/m\Z$-modules.
Suppose 
that $A$ is finite 
and that $B$ is a free $\Z/m\Z$-module.
Then the canonical map
\[
\Hom(A, \Z/m\Z) \otimes B \to \Hom(A, B),
\qquad
\chi \otimes b \mapsto [a \mapsto \chi(a)b]
\]
is an isomorphism.
\end{enumerate}
\end{lemma}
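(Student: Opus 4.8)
The plan is to reduce both parts to a single standard fact of commutative algebra: for a commutative ring $R$, a finitely presented $R$-module $A$, a flat $R$-module $B$, and an arbitrary $R$-module $N$, the canonical homomorphism
\[
\Hom_R(A, N) \otimes_R B \longrightarrow \Hom_R(A, N \otimes_R B),
\qquad \chi \otimes b \mapsto [\, a \mapsto \chi(a) \otimes b \,],
\]
is an isomorphism. Granting this, part (2) is the case $R = \Z/m\Z$, $N = R$: a finite $\Z/m\Z$-module is finitely presented since $\Z/m\Z$ is Noetherian, and a free module is flat; here $N \otimes_R B = B$ and the displayed map is the one in the statement. Part (1) is the case $R = \Z$, $N = \Q/\Z$: a finitely generated abelian group is finitely presented over the Noetherian ring $\Z$, a free abelian group is flat, and after identifying $N \otimes_\Z B = \Q/\Z \otimes_\Z B$ with $B \otimes_\Z \Q/\Z$ by the symmetry of the tensor product one recovers exactly the map of the statement. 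In both cases the only thing left to check is this identification of the two descriptions of the canonical map, which is immediate on elementary tensors.

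To prove the general fact I would proceed by d\'evissage in the variable $A$. The assertion is trivial for $A = R$, both sides being canonically $N \otimes_R B$, and it follows for $A = R^n$ finite free, since both $\Hom_R(-, N) \otimes_R B$ and $\Hom_R(-, N \otimes_R B)$ are additive functors of $A$. For a general finitely presented $A$, choose a presentation $R^{s} \to R^{t} \to A \to 0$ and apply $\Hom_R(-, N)$ to obtain a left exact sequence $0 \to \Hom_R(A, N) \to \Hom_R(R^t, N) \to \Hom_R(R^s, N)$. Since $B$ is flat, tensoring with $B$ preserves left exactness, so comparing the result with the corresponding left exact sequence for $N \otimes_R B$ via the canonical maps yields a morphism of left exact sequences which is an isomorphism on the two right-hand terms by the finite free case; hence it is an isomorphism on the kernels, which is the desired statement.

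I do not expect any genuine obstacle: the whole argument is formal, resting only on the fact that the coefficient rings $\Z$ and $\Z/m\Z$ are Noetherian (so that ``finitely generated'', resp. ``finite'', upgrades to ``finitely presented'') and that free modules are flat. The one point meriting an explicit line in the write-up is the compatibility of the two a priori different formulas for the canonical map, which is routine.
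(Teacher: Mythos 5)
Your proof is correct, but it takes a genuinely different route from the paper's. The paper proves the lemma directly: write $B = \Z^{\oplus I}$ (resp.\ $(\Z/m\Z)^{\oplus I}$) using freeness, observe that $- \otimes B = (-)^{\oplus I}$, and conclude from the fact that $\Hom(A,-)$ commutes with arbitrary direct sums whenever $A$ is finitely generated (a homomorphism from a finitely generated module into a direct sum lands in a finite sub-sum). You instead reduce to the standard commutative-algebra statement that for $R$ Noetherian, $A$ finitely presented, and $B$ flat, the map $\Hom_R(A,N)\otimes_R B \to \Hom_R(A, N\otimes_R B)$ is an isomorphism, and prove that by d\'evissage with a finite presentation of $A$ and left-exactness preserved by flatness. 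Both are correct; the trade-off is that your version is stated under weaker hypotheses on $B$ (flat rather than free) but leans on Noetherianness of the base ring to upgrade ``finitely generated'' to ``finitely presented,'' whereas the paper's argument needs only finite generation of $A$ and is essentially a one-line observation exploiting freeness of $B$. For the two concrete instances at hand the distinction is immaterial, and the paper's route is shorter.
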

\begin{proof}
(1) Write $B=\Z^{\oplus I}$ with some set $I$.
Since tensor product commutes with arbitrary sums,
we can identify
$-\otimes B = (-)^{\oplus I}$.
To conclude, it suffices to note that
$\Hom(A, -)$ commutes with arbitrary sums
because  $A$ is  finitely generated.
The proof of (2) is identical.
\end{proof}

\begin{lemma}\label{lem:homalg2}
Let $A, B$ be abelian groups.
Suppose that $A$ is finite
and that $B$ is  a free $\Z$-module.
Then we have canonical isomorphisms
\[
\Hom(A, \Q/\Z) \otimes B \cong 
\Hom(A, B \otimes \Q/\Z)
\cong \Ext(A, B).
\]
\end{lemma}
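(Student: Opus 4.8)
The plan is to establish the two displayed isomorphisms separately and then compose them. The first isomorphism, $\Hom(A, \Q/\Z) \otimes B \cong \Hom(A, B \otimes \Q/\Z)$, requires no new argument: it is precisely the content of Lemma \ref{lem:homalg0} (1), whose hypotheses ($A$ finitely generated, $B$ free) are implied by those of the present lemma. The finiteness of $A$, rather than mere finite generation, will be used only for the second isomorphism.

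For the second isomorphism, $\Hom(A, B \otimes \Q/\Z) \cong \Ext(A, B)$, I would begin from the short exact sequence $0 \to \Z \to \Q \to \Q/\Z \to 0$ and tensor it over $\Z$ with $B$. Since $B$ is free, hence flat, this yields a short exact sequence
\[
0 \to B \to B \otimes \Q \to B \otimes \Q/\Z \to 0.
\]
Applying $\Hom(A, -)$ produces a long exact sequence whose relevant segment is
\[
\Hom(A, B \otimes \Q) \to \Hom(A, B \otimes \Q/\Z) \to \Ext(A, B) \to \Ext(A, B \otimes \Q).
\]
The key observation is that $B \otimes \Q$ is a $\Q$-vector space, hence both torsion-free and divisible. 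Divisibility makes it an injective $\Z$-module, so $\Ext(A, B \otimes \Q) = 0$; and since $A$ is finite, every homomorphism $A \to B \otimes \Q$ has torsion image and therefore vanishes, so $\Hom(A, B \otimes \Q) = 0$. Consequently the connecting homomorphism $\Hom(A, B \otimes \Q/\Z) \to \Ext(A, B)$ is an isomorphism, and it is canonical because the long exact $\Ext$-sequence is functorial and the short exact sequence above is canonically derived from $0 \to \Z \to \Q \to \Q/\Z \to 0$.

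I do not expect any serious obstacle here; the proof is routine homological algebra. The only points deserving a moment's care are the exactness at the left of $0 \to B \to B \otimes \Q$ (which is where flatness of the free module $B$ is used) and the vanishing of the two outer $\Ext$/$\Hom$ groups (which is where the finiteness of $A$, as opposed to finite generation, enters). Composing the first isomorphism with this one gives the asserted chain of canonical isomorphisms.
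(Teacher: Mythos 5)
Your proof is correct and takes essentially the same route as the paper: first isomorphism from Lemma~\ref{lem:homalg0}, then the short exact sequence $0 \to B \to B \otimes \Q \to B \otimes \Q/\Z \to 0$ together with the vanishing of $\Hom(A, B\otimes\Q)$ and $\Ext(A, B\otimes\Q)$. The only difference is that you spell out the flatness of $B$ and the injectivity of $B\otimes\Q$ in more detail, which the paper leaves implicit.
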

\begin{proof}
The first isomorphism is from
Lemma \ref{lem:homalg0}.
The second is seen by an exact sequence
$0 \to B \to B \otimes \Q \to B \otimes \Q/\Z \to 0$,
together with
$\Hom(A, B \otimes \Q)=\Ext(A, B \otimes \Q)=0$
as $A$ is finite and $B \otimes \Q$ is injective.
\end{proof}

\begin{lemma}\label{lem:homalg}
Let $A, B$ be abelian groups
with $A$ finite.
Then we have canonical isomorphisms
\[
\Tor(\Hom(A, \Q/\Z), B) \cong \Hom(A, B),
\qquad
\Hom(A, \Q/\Z) \otimes B \cong \Ext(A, B).
\]
\end{lemma}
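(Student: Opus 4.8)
The plan is to deduce both isomorphisms from Lemma~\ref{lem:homalg2}, which treats the case in which the second argument is a free $\Z$-module, by resolving $B$.

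First I would choose a short exact sequence $0 \to R \overset{\iota}{\to} F \to B \to 0$ with $F$ a free abelian group; then $R$, being a subgroup of a free abelian group, is free as well. Since $A$ is finite, hence torsion, while $R$ and $F$ are torsion-free, we have $\Hom(A,R)=\Hom(A,F)=0$, and since $\Z$ has global dimension one the functors $\Ext^i(A,-)$ and $\Tor_i(-,-)$ vanish for $i\ge 2$. Hence the long exact sequences of $\Ext(A,-)$ and of $\Hom(A,\Q/\Z)\otimes(-)$ attached to the above short exact sequence collapse to natural exact sequences
\[
0 \to \Hom(A,B) \to \Ext(A,R) \overset{\iota_*}{\to} \Ext(A,F) \to \Ext(A,B) \to 0,
\]
\[
0 \to \Tor(\Hom(A,\Q/\Z),B) \to \Hom(A,\Q/\Z)\otimes R \overset{\iota_*}{\to} \Hom(A,\Q/\Z)\otimes F \to \Hom(A,\Q/\Z)\otimes B \to 0.
\]

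Then I would apply Lemma~\ref{lem:homalg2} to the free groups $R$ and $F$, obtaining natural isomorphisms $\Ext(A,R)\cong \Hom(A,\Q/\Z)\otimes R$ and $\Ext(A,F)\cong \Hom(A,\Q/\Z)\otimes F$ that, by their naturality, identify the map $\iota_*$ of the first sequence with the map $\iota_*$ of the second. Comparing kernels and cokernels of the two identified maps then yields the desired isomorphisms $\Hom(A,B)\cong \Tor(\Hom(A,\Q/\Z),B)$ and $\Ext(A,B)\cong \Hom(A,\Q/\Z)\otimes B$. Since projective resolutions of $B$ are unique up to chain homotopy and all maps involved (the two connecting homomorphisms and the isomorphisms of Lemma~\ref{lem:homalg2}) are natural, the resulting isomorphisms do not depend on the chosen resolution and are functorial in $A$ and $B$, giving the asserted canonicity.

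The argument is entirely formal and I do not expect a genuine obstacle. The only points requiring care are the vanishing $\Hom(A,R)=\Hom(A,F)=0$, which is what truncates the $\Ext$ long exact sequence to the four-term shape matching the $\Tor$ sequence, and the naturality in the first variable already built into Lemma~\ref{lem:homalg2}, which is precisely what legitimizes identifying the two connecting maps.
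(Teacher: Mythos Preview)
Your proposal is correct and is essentially identical to the paper's proof: the paper also resolves $B$ by a two-term free resolution, applies the functors $\Hom(A,\Q/\Z)\otimes -$ and $\Hom(A,-)$, and uses Lemma~\ref{lem:homalg2} on the free terms to match the resulting four-term exact sequences. The only quibble is that the naturality you need from Lemma~\ref{lem:homalg2} is in the \emph{second} variable (i.e.\ in $B$, along the map $R\to F$), not the first.
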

\begin{proof}
Set $(-)^\vee := \Hom(-, \Q/\Z)$.
We take an exact sequence
$0 \to B_1 \to B_0 \to B \to 0$
with free $\Z$-modules $B_i$.
Applying the
two functors $A^\vee \otimes -$ and $\Hom(A, -)$,
we obtain a commutative diagram with exact rows
\[
\xymatrix{
0 \ar[r] &
\Tor(A^\vee, B) \ar[r]  &
A^\vee \otimes B_1 \ar[r] \ar[d]^\cong &
A^\vee \otimes B_0 \ar[r] \ar[d]^\cong &
A^\vee \otimes B \ar[r]  &
0
\\
0 \ar[r] &
\Hom(A, B) \ar[r] &
\Ext(A, B_1) \ar[r] &
\Ext(A, B_0) \ar[r] &
\Ext(A, B) \ar[r] &
0,
}
\]
where two middle vertical isomorphisms
are from Lemma \ref{lem:homalg2}.
The lemma follows.
\end{proof}

\section{Appendix : $\P^1$-invariance and birational motives}\label{app:bruno}
The aim of this appendix is to prove Proposition \ref{prop:P1inv-bir} below.
We freely use the basic notion from \cite{MVW}.
Let $F$ be a Nisnevich sheaf with transfers over our base field $k$.
For $\epsilon = 0, 1$,
we denote by $i_\epsilon : \Spec k \to \A^1$ 
the corresponding closed immersions and define
\begin{align*}
&h_0(F):=\Coker(i_0^*-i_1^* : \Hom_\PST(\Z_\tr(\A^1), F) \to F),
\\
&\ol{h}_0(F):=\Coker(i_0^*-i_1^* : \Hom_\PST(\Z_\tr(\P^1), F) \to F)
\end{align*}
as presheaf cokernels.
For an abelian group $A$, we write
$F \otimes A$ for a presheaf with transfers
given by $U \mapsto F(U) \otimes_\Z A$.
Note that the canonical map 
\begin{equation}\label{eq:iso-nis}
(F \otimes A)_\Nis \to (F_\Nis \otimes A)_\Nis 
\end{equation}
is an isomorphism
(being a map of sheaves that induces isomorphisms on stalks).
The following proposition is communicated to us by Bruno Kahn.

\begin{proposition}[B. Kahn]\label{prop:P1inv-bir}
Let $G$ be a $\P^1$-invariant Nisnevich sheaf with transfers.
For any $X \in \Sm$ connected and for any $Y \in \SmProj$,
there is a homomorphism $(*)$ fitting in a commutative diagram
\[
\xymatrix{
\Cor(X, Y) \ar[rr] \ar[d]
& & \Hom_{\Ab}(G(Y), G(X))
\\
\Cor(\Spec k(X), Y) \ar@{=}[r] 
&  Z_0(Y_{k(X)}) \ar[r]
& \CH_0(Y_{k(X)}). \ar[u]_{(*)}
}
\]
%
In particular, $G$ is birational and motivic 
in the sense of Definition \ref{def:bir-mot-inv}
(with $\Lambda=\Z$).
\end{proposition}
\begin{proof}
We consider the following diagram:
\[ 
\xymatrix{
\Cor(X, Y) \otimes_\Z G(Y) 
\ar[d] \ar[r]^-{(0)}
& G(X)
\\
(\ol{h}_0(Y) \otimes G(Y))(X) \ar[d] \ar[ur]_-{(1)} \ar[rd] &
\\
(\ol{h}_0(Y)_\Nis \otimes G(Y))(X) \ar[d]
&
(\ol{h}_0(Y) \otimes G(Y))_\Nis(X) 
\ar[uu]_-{(2)} \ar[dl]^-{(3)}_-{\cong}
\\
(\ol{h}_0(Y)_\Nis \otimes G(Y))_\Nis(X). 
}
\]
The map (0) factors through (1) since $G$ is $\P^1$-invariant;
it also factors through (2) since it is a Nisnevich sheaf.
By \eqref{eq:iso-nis}, (3) is an isomorphism.
On the other hand, we have
\begin{align*}
(\ol{h}_0(Y)_\Nis \otimes G(Y))(X)
&\cong
(h_0(Y)_\Nis \otimes G(Y))(X)
\\
=
&h_0(Y)_\Nis(X) \otimes_\Z G(Y)
\cong
\CH_0(Y_{k(X)}) \otimes_\Z G(Y),
\end{align*}
where the first isomorphism is from \cite[Theorem 3.5]{KOY}
and the third from \cite[Theorem 3.1.2]{KS2}.
We obtain an induced map 
$\CH_0(Y_{k(X)}) \otimes_\Z G(Y) \to G(X)$.
The proposition follows by adjunction.
\end{proof}


\bibliographystyle{plain}

\end{document}